\newtheorem{Theorem}{Theorem}[section]
\newtheorem{Lemma}[Theorem]{Lemma}
\newtheorem{Corollary}[Theorem]{Corollary}
\numberwithin{Theorem}{section}
\theoremstyle{definition}
\newtheorem{Example}{Example}
\newtheorem*{Definition}{Definition}
\numberwithin{equation}{section}
\newcommand{\inner}[1]{\left< #1 \right>}
\newcommand{\0}{\textcolor{gray}{0}}
\newcommand{\gcdots}{\textcolor{gray}{\cdots}}
\newcommand{\gvdots}{\textcolor{gray}{\vdots}}
\newcommand{\gddots}{\textcolor{gray}{\ddots}}
\theoremstyle{definition}
\newcommand{\baseRing}[1]{\ensuremath{\mathbb{#1}}}
\newcommand{\C}{\baseRing{C}}
\newcommand{\Z}{\mathbb{Z}}
\newcommand{\K}{\mathcal{K}}
\newcommand{\X}{\mathcal{X}}
\renewcommand{\vec}[1]{{\bf #1}}
\newcommand{\Irr}{\operatorname{Irr}}
\newcommand{\diag}{\operatorname{diag}}
\newcommand{\Aut}{\operatorname{Aut}}
\newcommand{\tr}{\operatorname{tr}}
\newcommand{\norm}[1]{\|#1\|}
\newcommand{\minimatrix}[4]{\begin{bmatrix}#1&#2\\#3&#4\end{bmatrix}}
\begin{document}
\title{Ramanujan sums as supercharacters}

\author[C.F.~Fowler]{Christopher F. Fowler}
\address{University of Washington\\
Department of Mathematics\\
Box 354350\\
Seattle, WA 98195-4350}
\email{cff2008@math.washington.edu}

\author[S.R.~Garcia]{Stephan Ramon Garcia}
    \address{   Department of Mathematics\\
            Pomona College\\
            Claremont, California\\
            91711 \\ USA}
    \email{Stephan.Garcia@pomona.edu}
    \urladdr{\url{http://pages.pomona.edu/~sg064747}}

\author[G.~Karaali]{Gizem Karaali}
    \email{Gizem.Karaali@pomona.edu}
    \urladdr{\url{http://pages.pomona.edu/~gk014747}}

\thanks{S.R.~Garcia partially funded by NSF grant DMS-1001614. G.~Karaali partially funded by a NSA Young Investigator Award. }

\begin{abstract}
	The theory of \emph{supercharacters}, recently developed by Diaconis-Isaacs
	and Andr\'e, can be used to derive the fundamental algebraic properties of Ramanujan sums.
	This machinery frequently yields one-line proofs of difficult identities and 
	provides many novel formulas.  In addition to exhibiting a new application of supercharacter theory, 
	this article also serves as a blueprint for future work since
	some of the abstract results we develop are applicable in much greater generality.
\end{abstract}

    \keywords{Ramanujan sum, multiplicative function, arithmetic function, even function modulo $n$,
    supercharacter theory, representation, supercharacter, Kronecker product}

\maketitle

\bibliographystyle{plain}


\section{Introduction}

	Our primary aim in this note is to demonstrate that most of the fundamental algebraic properties of Ramanujan sums
	can be deduced using the theory of \emph{supercharacters}, recently developed by Diaconis-Isaacs
	and Andr\'e.  In fact, the machinery of supercharacter theory frequently yields one-line proofs of many
	difficult identities and provides an array of new tools which can be used to derive various novel formulas.
	Our approach is entirely systematic, relying on a flexible and general framework.
	Indeed, we hope to convince the reader that supercharacter theory provides a natural 
	framework for the study of Ramanujan sums.
	In addition to exhibiting a novel application of supercharacter theory, this article also serves as a blueprint for future work since
	some of the abstract results which we develop are applicable in much greater generality (see \cite{SESUP}).
	
\subsection{Ramanujan sums}
	In what follows, we let $e(x) = \exp(2 \pi i x)$, so that the function $e(x)$ is periodic with period $1$.
	For integers $n,x$ with $n \geq 1$, the expression
	\begin{equation}\label{eq-RamanujanSumDefinition}
		c_n(x) = \sum_{ \substack{ j = 1 \\ (j,n) = 1} }^n e\left( \frac{jx}{n} \right)
	\end{equation}
	is called a \emph{Ramanujan sum} (or sometimes \emph{Ramanujan's sum}).
	Ramanujan himself (1918) \cite[Paper 21]{Ramanujan} noted that 
	Dirichlet and Dedekind had already considered such expressions
	in their famed text \emph{Vorlesungen \"uber Zahlentheorie} (1863).  Moreover,
	certain related identities were already known to von Sterneck (1902) \cite{vonSterneck},
	Kluyver  (1906) \cite{Kluyver}, Landau (1909) \cite{Landau}, and Jensen (1915) \cite{Jensen}.
	Nevertheless, ``Ramanujan was the first to appreciate the importance of the sum and to use it systematically,''
	according to G.H.~Hardy \cite[p.~159]{HardyRamanujan}.

	Ramanujan's interest in the sums \eqref{eq-RamanujanSumDefinition} originated in his
	desire to ``obtain expressions for a variety of well-known arithmetical functions of $n$ in the form of
	a series $\sum_s a_s c_s(n)$.''  This particular analytic aspect of the subject has flourished in the intervening years and is discussed at length
	in \cite{Lucht,SchwarzSurvey,SchwarzBook, McCarthyBook}.
	On the other hand, in classical character theory Ramanujan sums can be used to 
	establish the integrality of the character values for the symmetric group \cite[Cor.~22.17]{JamesLiebeck}.
	However, perhaps the most famous appearance of Ramanujan sums is their crucial role
	in Vinogradov's proof that every sufficiently large odd number is the sum of three primes \cite[Ch.~8]{Nathanson}.

	In more recent years, Ramanujan sums have appeared in the study of	
	Waring-type formulas \cite{Konvalina}, 
	the distribution of rational numbers in short intervals \cite{Jutila}, 
	equirepartition modulo odd integers \cite{Balandraud},
	the large sieve inequality \cite{Ramare},
	graph theory \cite{Droll},
	symmetry classes of tensors \cite{TYT},
	combinatorics \cite{Rao},
	cyclotomic polynomials \cite{TothCyclotomic, Motose, Nicol, ErdosVaughan},
	and Mahler matrices \cite{LehmerMahler}.
	In physics, Ramanujan sums have applications in the processing of low-frequency noise \cite{Planat2} and of long-period sequences \cite{Planat3}
	and in the study of quantum phase locking \cite{Planat}.	
	We should also remark that various generalizations of the classical Ramanujan sum \eqref{eq-RamanujanSumDefinition}
	have arisen over the years \cite{AndersonApostol, Cohen, CohenMAA, Sugunamma}
	and that Ramanujan sums involving matrix variables have also been considered \cite{Nanda, Ramanathan}.

\subsection{Supercharacters}
	The theory of supercharacters, of which classical character theory is a special case, was recently introduced 
	by P.~Diaconis and I.M.~Isaacs (2008) \cite{DiIs08} to generalize the \emph{basic characters} of 
	C.~Andr\'e \cite{An95, An01, An02}.  	
	Here we summarize a few important facts.  Further details can be found in \cite{DiIs08, Hendrickson}.

	\begin{Definition}[Diaconis-Isaacs \cite{DiIs08}]
		Let $G$ be a finite group, let $\K$ be a partition of $G$, and let $\X$ be a partition of the set $\Irr(G)$ 
		of irreducible characters of $G$. We call the ordered pair $(\X, \K)$ a \emph{supercharacter theory} if 
		\begin{enumerate}\addtolength{\itemsep}{0.5\baselineskip}
			\item $\{1\} \in \K$
			\item $|\X| = |\K|$
			\item For each $X \in \X$ the character  
				\begin{equation*}
					\sigma_X = \sum_{\chi \in X} \chi(1)\chi
				\end{equation*}
				is constant on each $K \in \K$.
		\end{enumerate}
		The characters $\sigma_X$ are called \emph{supercharacters} and the elements $K$ of $\K$
		 are called \emph{superclasses}.
	\end{Definition}

	As \cite[Lem.~2.1]{DiIs08} shows, the preceding definition is equivalent to 
	the following.

	\begin{Definition}[Andr\'e \cite{An09}]
		Let $G$ be a finite group, let $\K$ be a partition of $G$, and let $\X$ be a collection of complex characters of 
		$G$. We call the ordered pair $(\X, \K)$ a \emph{supercharacter theory} if 
		\begin{enumerate}\addtolength{\itemsep}{0.5\baselineskip}
			\item Every irreducible character of $G$ is a constituent of a unique $\chi \in \X$
			\item $|\X| = |\K|$
			\item Each character $\chi \in \X$ is constant on $K$ for each $K \in \K$.
		\end{enumerate}
		The elements $\chi$ of $\X$ are called \emph{supercharacters} and the sets $K$ are called \emph{superclasses}. 
	\end{Definition}
	
	Regardless of which definition one chooses to work with, it is straightforward to verify that 
	each $K$ in $\K$ is a union of conjugacy classes of $G$ and that each of the partitions $\K$ and $\X$ 
	determines the other.  The only significant difference between these two definitions is that the second approach can
	yield supercharacters which are multiples of the $\sigma_X$ defined above.
	
	In the literature to date, the main use of supercharacter theory has been to 
	perform computations when a complete character theory is difficult 
	or impossible to determine.  For instance, Andr\'e developed a successful 
	supercharacter theory for the unipotent matrix groups $U_n(q)$
	whose representation theories are known to be \emph{wild} (see also \cite{Ya01,Ya10}). 
	Supercharacter theories have also proven to be 
	relevant outside the realm of finite group theory.  For instance, 
	these notions can be used to obtain a more general 
	theory of spherical functions and Gelfand pairs \cite{DiIs08}.  In a different direction, 
	recent work has revealed deep connections between supercharacter theory and 
	the Hopf algebra of symmetric functions of noncommuting variables \cite{Aguiar}.
	Another application may be found 
	in \cite{ACDiSt04}, where the authors use supercharacter theory to study random walks on upper triangular matrices.
	Other recent work on supercharacters concerns connections with Schur rings \cite{Hendrickson,HuJo08}
	and with their combinatorial properties \cite{DiTh09, ThVe09, Th10}.
	We should also remark that similar constructions surfaced independently in the study of
	quasigroups and association schemes in the form of \emph{fusions} of character 
	tables \cite{JoSm89, JoPo99,HuJo08}.
	
\subsection{General approach}
	We proceed along a different course, turning our attention to the group
	$\Z/n\Z$, whose classical representation theory is already well-understood.   It turns out that a natural
	supercharacter theory for $\Z/n\Z$ can be developed for which Ramanujan sums appear as values of the corresponding
	supercharacters.  In this manner, the modern machinery of supercharacter theory can be used to
	generate a wide variety of formulas and identities for Ramanujan sums.  Along the way, we also develop
	a notion of \emph{superclass arithmetic} which generalizes the standard arithmetic of conjugacy classes
	from classical character theory.

\section{A supercharacter theory for $\Z/n\Z$}\label{SectionZnZ}
	In this section we introduce a supercharacter theory for $\Z/n\Z$ which arises naturally
	from the action of $\Aut(\Z/n\Z)$ on $\Z/n\Z$ (see \cite{HendricksonNew} for a description of all possible
	supercharacter theories on $\Z/n\Z$).  Before proceeding, we require a few preliminaries.  
	Although some of this material is well-known in certain circles, 
	we include the details since the theory of supercharacters 
	is not yet common knowledge among the general mathematics community.  Moreover, 
	for many of the upcoming applications we require a particular unitary rescaling of our supercharacter 
	tables which is not widely used.

\subsection{Supercharacter tables}
	Suppose that $G$ is a finite group of order $|G|$ and that
	$(\X,\K)$ is a supercharacter theory for $G$.  In other words, suppose that we have a partition
	$\X = \{X_1,X_2,\ldots,X_r\}$ of $\Irr(G)$ with corresponding supercharacters 
	\begin{equation}\label{eq-SCG}
		\sigma_i = \sum_{\chi \in X_i} \chi(1) \chi
	\end{equation}
	and a compatible partition $\K = \{K_1,K_2,\ldots,K_r\}$ of $G$ into superclasses.  
	The \emph{supercharacter table} for $G$ corresponding to $(\X,\K)$ is the $r \times r$ array
	\begin{equation}\label{eq-SuperCharacterTableGeneral}
		\begin{array}{|c||cccc|}
		\hline
		& K_1 & K_2 & \cdots & K_r \\
		\hline\hline
		\sigma_1 & \sigma_1(K_1) & \sigma_1(K_2) & \cdots & \sigma_1(K_r) \\
		\sigma_2 & \sigma_2(K_1) & \sigma_2(K_2) & \cdots & \sigma_2(K_r) \\
		\vdots & \vdots & \vdots & \ddots & \vdots \\
		\sigma_r & \sigma_r(K_1) & \sigma_r(K_2) & \cdots & \sigma_r(K_r) \\
		\hline
		\end{array}
	\end{equation}
	whose $(i,j)$ entry is $\sigma_i(K_j)$.  We let 
	\begin{equation*}
		S = \big( \sigma_i(K_j) \big)_{i,j=1}^r
	\end{equation*}
	denote the $r \times r$ matrix which encodes the data in 
	\eqref{eq-SuperCharacterTableGeneral}.  In what follows we frequently identify supercharacter
	tables with their matrix representations and we often refer to the matrix $S$ itself as
	a supercharacter table.
	
	Recall that a function $f:G\to\C$ is called a \emph{class function} if $f$ is constant
	on each conjugacy class of $G$.  The space of complex-valued class functions on $G$ is endowed
	with a natural inner product
	\begin{equation}\label{eq-StandardInnerProduct}
		\inner{\chi,\chi'} = \frac{1}{|G|} \sum_{g \in G} \chi(g)\overline{\chi'(g)},
	\end{equation}
	with respect to which the irreducible characters of $G$ form an orthonormal basis.
	In light of the fact that supercharacters are constant on superclasses 
	(i.e., they are \emph{superclass functions}), \eqref{eq-StandardInnerProduct} implies that
	\begin{equation*}
		\inner{\sigma_i,\sigma_j} 
		= \frac{1}{|G|} \sum_{\ell=1}^r |K_{\ell}| \sigma_i(K_{\ell})\overline{\sigma_j(K_{\ell})}.
	\end{equation*}
	It follows from \eqref{eq-SCG} and the orthogonality of irreducible characters that
	\begin{equation*}
		\inner{ \sigma_i, \sigma_j} = \Big<\sum_{\chi \in X_i} \chi(1) \chi ,  \sum_{\chi' \in X_j} \chi'(1) \chi' \Big>
			= \delta_{i,j} \norm{X_i}_2^2 
	\end{equation*}
	where
	\begin{equation*}
		 \norm{X_i}_2 = \sqrt{\sum_{\chi \in X_i} | \chi(1)|^2} 
	\end{equation*}
	is a convenient shorthand.
	Putting this all together, we see that
	\begin{equation}\label{eq-RowOrthogonality}
		\frac{1}{|G|}\sum_{\ell=1}^r |K_{\ell}| \sigma_i(K_{\ell})\overline{\sigma_j(K_{\ell})} = \delta_{i,j}\norm{X_i}_2^2.
	\end{equation}
	
	It is helpful to interpret the preceding result matricially.  Letting 
	\begin{equation}\label{eq-RightMultiplier}
		R = \diag( \sqrt{ |K_1|}, \sqrt{ |K_2|},\ldots, \sqrt{ |K_r| } ),
	\end{equation}
	we see that \eqref{eq-RowOrthogonality} is equivalent to asserting that
	\begin{equation*}
		(SR)(SR)^* = |G| \diag( \norm{X_1}_2^2, \norm{X_2}_2^2, \ldots, \norm{X_r}_2^2).
	\end{equation*}
	Letting
	\begin{equation}\label{eq-LeftMultiplier}
		L = \frac{1}{\sqrt{|G|}} \diag( \norm{X_1}_2^{-1}, \norm{X_2}_2^{-1},\ldots, \norm{X_r}_2^{-1}),
	\end{equation}
	we conclude that the matrix 
	\begin{equation}\label{eq-LSR}
		U = L S R
	\end{equation}
	satisfies $UU^* = I$.  In other words, the $r \times r$ matrix	
	\begin{equation}\label{eq-UnitaryMatrix}
		U = \frac{1}{\sqrt{|G|}} \left[  \frac{\sigma_i(K_j)\sqrt{  |K_j| }}{ \norm{X_i}_2} \right]_{i,j=1}^r
	\end{equation}
	is unitary.  Since $U^*U = I$, we now obtain the column orthogonality relation
	\begin{equation}\label{eq-ColumnOrthogonality}
		\frac{\sqrt{|K_i||K_j|}}{|G|} \sum_{\ell=1}^r \frac{\sigma_{\ell}(K_i)\overline{\sigma_{\ell}(K_j)}}{\norm{X_{\ell}}_2^2}
		= \delta_{i,j}.
	\end{equation}
	Similarly, we see that the equation $UU^* = I$ encodes \eqref{eq-RowOrthogonality}.

\subsection{A supercharacter table for $\Z/n\Z$}\label{SubsectionAutH}
	As remarked in \cite{DiIs08}, each subgroup $H$ of $\Aut(G)$ determines a corresponding 
	supercharacter theory $(\X_H,\K_H)$ for $G$.  To be more specific,
	$H$ induces a permutation of $\Irr(G)$ while also permuting the conjugacy classes of $G$.
	By Brauer's Lemma \cite[Thm.~6.32, Cor.~6.33]{Isaacs}, the number of $H$-orbits of $\Irr(G)$ equals
	the number of $H$-orbits induced on the set of conjugacy classes of $G$.
	This decomposition yields a supercharacter theory for $G$ where the elements $X_i$ of
	$\X_H = \{X_1,X_2,\ldots,X_r\}$ are $H$-orbits in $\Irr(G)$ and the superclasses $K_i$
	of $\K_H = \{K_1,K_2,\ldots,K_r\}$ are unions of $H$-orbits of conjugacy classes of $G$.
	By construction, each supercharacter \eqref{eq-SCG}
	is constant on each member of $\K_H$.
	
	Fix a positive integer $n$ and let $\tau(n)$ denote the number of divisors of $n$.  Let
	$d_1,d_2,\ldots,d_{\tau(n)}$ denote the divisors of $n$, the exact order being unimportant
	for our purposes at the moment.  Recall that the irreducible characters of $\Z/n\Z$ are precisely the functions
	\begin{equation*}
		\chi_a(x) = e\left(\frac{ax}{n}\right)
	\end{equation*}
	for $a=1,2,\ldots,n$ and that each automorphism of $\Z/n\Z$ is of the form
	\begin{equation*}
		\psi_u(a) = ua
	\end{equation*}
	for some $u$ in $(\Z/n\Z)^{\times}$.  In particular, we note that $\Aut(\Z/n\Z) \cong (\Z/n\Z)^{\times}$.
	
	Next we observe that there exists a $u$ in $(\Z/n\Z)^{\times}$ such that $\psi_u(a) = b$
	if and only if $(a,n)=(b,n)$.  In light of the fact that $\chi_a \circ \psi_u = \chi_{au}$, it is clear that the action of
	$\Aut(\Z/n\Z)$ on $\Irr(\Z/n\Z)$ partitions the irreducible characters
	into a disjoint collection $\X = \{X_1,X_2,\ldots,X_{\tau(n)}\}$ of orbits
	\begin{equation*}
		X_i = \left\{ \chi_a : (a,n) = \frac{n}{d_i} \right\},
	\end{equation*}
	each of which satisfies
	\begin{equation}\label{eq-XDI}
		|X_i| = \phi(d_i),
	\end{equation}
	where $\phi$ denotes the Euler totient function.  Thus
	\begin{equation}\label{eq-SIDF}
		\sigma_i(x) = \sum_{\chi\in X_i} \chi(x) = \sum_{\substack{j=1 \\ (j,n)=\frac{n}{d_i}}}^n e\left( \frac{jx}{n}\right)
		= \sum_{\substack{k=1\\(k,d_i)=1}}^{d_i} e\left( \frac{kx}{d_i} \right) = c_{d_i}(x),
	\end{equation}
	each of which is a Ramanujan sum.
	On the other hand, the action of $\Aut(\Z/n\Z)$ on $\Z/n\Z$ results in a partition $\K = \{K_1,K_2,\ldots,K_{\tau(n)}\}$
	of $\Z/n\Z$ into disjoint orbits
	\begin{equation}\label{eq-KD}
		K_j = \left\{ a \in \Z/n\Z : (a,n)=\frac{n}{d_j} \right\},
	\end{equation}
	each of which satisfies
	\begin{equation}\label{eq-KDI}
		|K_j| = \phi(d_j).
	\end{equation}
	Since each conjugacy class of $\Z/n\Z$ is a singleton, it is clear that the proposed superclass
	\eqref{eq-KD} is the union of conjugacy classes.
	
	Let us pause briefly to note that since $c_n(x)$ is a superclass function with respect to the variable $x$,
	we obtain the following useful fact:
	\begin{equation}\label{eq-DependsGCD}
		\boxed{ c_n\big( (x,n) \big) = c_n(x) .}
	\end{equation}
	In other words, $c_n(x)$ is an \emph{even function modulo $n$} \cite[p.~79]{McCarthyBook}, \cite[p.~15]{SchwarzBook}.
	
	Putting this all together, we obtain the $\tau(n) \times \tau(n)$ supercharacter table $S(n)$
	\begin{equation*}
		\begin{array}{|c||cccc|}
			\hline
			& K_1 & K_2 & \cdots & K_{\tau(n)} \\
			\hline\hline
			\sigma_1 & c_{d_1}( \frac{n}{d_1}) &  c_{d_1}( \frac{n}{d_2}) & \cdots &  c_{d_1}( \frac{n}{d_{\tau(n)}}) \\
			\sigma_2 & c_{d_2}( \frac{n}{d_1}) &  c_{d_2}( \frac{n}{d_2}) & \cdots &  c_{d_2}( \frac{n}{d_{\tau(n)}}) \\
			\vdots & \vdots & \vdots & \ddots & \vdots \\
			\sigma_{\tau(n)} & c_{d_{\tau(n)}}( \frac{n}{d_1}) &  c_{d_{\tau(n)}}( \frac{n}{d_2}) & \cdots &  c_{d_{\tau(n)}}( \frac{n}{d_{\tau(n)}}) \\
			\hline
		\end{array}
	\end{equation*}
	whose $(i,j)$ entry is given by
	\begin{equation}\label{eq-Sij}
		\big[S(n)\big]_{i,j} = c_{d_i}\left( \frac{n}{d_j} \right).
	\end{equation}
	When there is no chance of confusion, we simply write $S=S(n)$.  We leave the
	particular order in which the divisors $d_1,d_2,\ldots,d_{\tau(n)}$ of $n$ are listed unspecified, noting only that
	any pair of orderings lead to two matrices which are similar
	via a suitable permutation matrix.  
	Such relationships between matrices will arise frequently in what follows and we therefore
	introduce the following notation.
	We shall write $A \cong B$ whenever $A$ and $B$ are square matrices such that
	$B = P^{-1}AP$ for some permutation matrix $P$.  Although we use the same symbol
	to denote the isomorphism of groups, the meaning should be clear from context.

	Before moving on, let us recall that pre- and post-multiplying $S$ by the diagonal matrices
	$L$, given by \eqref{eq-LeftMultiplier}, and $R$, given by \eqref{eq-RightMultiplier}, 
	yield the unitary matrix $U = LSR$.  In light of \eqref{eq-XDI} and \eqref{eq-KDI}, it turns out that 
	$L = (\sqrt{n} R)^{-1}$ whence
	\begin{equation}\label{eq-Similar}
		\sqrt{n} U = R^{-1}SR.
	\end{equation}
	In other words, the supercharacter table $S(n)$ is similar to a multiple of the unitary matrix
	$U=U(n)$ given by
	\begin{equation}\label{eq-ExplicitU}
		 \frac{1}{\sqrt{n}}\small
		\begin{bmatrix}
			c_{d_1}( \frac{n}{d_1}) &  c_{d_1}( \frac{n}{d_2}) \sqrt{ \frac{\phi(d_2)}{\phi(d_1)} }& \cdots &  c_{d_1}( \frac{n}{d_{\tau(n)}}) \sqrt{ \frac{\phi(d_{\tau(n)})}{\phi(d_1)} } \\
			c_{d_2}( \frac{n}{d_1})  \sqrt{ \frac{\phi(d_1)}{\phi(d_2)} } &  c_{d_2}( \frac{n}{d_2}) & \cdots &  c_{d_2}( \frac{n}{d_{\tau(n)}})  \sqrt{ \frac{\phi(d_{\tau(n)})}{\phi(d_2)} }\\
			\vdots & \vdots & \ddots & \vdots \\
			c_{d_{\tau(n)}}( \frac{n}{d_1})  \sqrt{ \frac{\phi(d_1)}{\phi(d_{\tau(n)})} }&  c_{d_{\tau(n)}}( \frac{n}{d_2})  \sqrt{ \frac{\phi(d_2)}{\phi(d_{\tau(n)})} }& \cdots &  c_{d_{\tau(n)}}( \frac{n}{d_{\tau(n)}}) \\
		\end{bmatrix}.
	\end{equation}
	
	\begin{Example}
		If $p$ is a prime number, then the two divisors $d_1 = 1$ and $d_2 = p$ of $p$ lead to
		the corresponding superclasses $K_1 = \{p\}$ and $K_2 = \{1,2,\ldots,p-1\}$ of $\Z/ p\Z$.  
		A short computation now reveals that
		\begin{equation}\label{eq-CharacterTablePrime}
			S(p) = \minimatrix{1}{1}{p-1}{-1},\qquad
			U(p) = \frac{1}{\sqrt{p}} \minimatrix{1}{ \sqrt{p-1} }{ \sqrt{p-1} }{-1}.
		\end{equation}
		In particular, observe that $U(p)$ is a selfadjoint unitary involution which has only real entries.  It turns out,
		as we shall see, that this is true for general $U(n)$.
	\end{Example}

\subsection{Orthogonality relations}
	Using the row and column orthogonality relations \eqref{eq-RowOrthogonality} and
	\eqref{eq-ColumnOrthogonality}, we immediately obtain 
	\begin{equation}\label{eq-RRO}
		\boxed{
		\sum_{k|n}   \phi(k) c_{d_i}\left(\frac{n}{k} \right) c_{d_j}\left(\frac{n}{k} \right) = 
		\begin{cases}
			0 & \text{if $i \neq j$},\\
			n\phi(d_i) & \text{if $i =j$},
		\end{cases}
		}
	\end{equation}
	and
	\begin{equation}\label{eq-CCO}
		\boxed{
		\sum_{k|n} \frac{1}{\phi(k)} c_k\left( \frac{n}{d_i} \right)c_k\left( \frac{n}{d_j} \right)=
		\begin{cases}
			0 & \text{if $i \neq j$},\\[2pt]
			\frac{n}{\phi(d_i)} & \text{if $i =j$},
		\end{cases}
		}
	\end{equation}
	respectively.  The first is a well-known identity \cite[Thm.~3.1.e, p.~16]{SchwarzBook} and the second
	is somewhat lesser-known \cite[Ex.~2.22]{McCarthyBook}.  If $d|n$, then letting $d_i = d$ and $d_j = 1$ in \eqref{eq-RRO}
	we obtain \cite[Ex.~2.24]{McCarthyBook}
	\begin{equation*}
		\boxed{
		\sum_{k|n}   \phi(k) c_{d}\left(\frac{n}{k} \right)  = 
		\begin{cases}
			0 & \text{if $d \neq 1$},\\
			n & \text{if $d = 1$}.
		\end{cases}
		}
	\end{equation*}

\section{Multiplicativity and Kronecker products}\label{SectionKronecker}

	In this section, we 
	consider product supercharacter theories and their ramifications for the study of Ramanujan sums.
	In particular, it turns out that many of the peculiar multiplicative properties of Ramanujan sums
	can be easily derived by examining Kronecker products of supercharacter tables.
	In addition to providing simple proofs of many standard identities, our techniques will ultimately permit
	the derivation of many novel identities as well (e.g., the bizarre determinantal formula \eqref{eq-Determinant},
	and the power sum identities of Subsection \ref{SubsectionPower}).

\subsection{Prime powers}\label{SubsectionPrimePowers}
	In the following, we let $p$ denote a fixed prime number.
	For each $\alpha \geq 1$, let us identify the supercharacter table $S=S(p^{\alpha})$
	which arises from the action of $\Aut (\Z / p^{\alpha}\Z) \cong (\Z / p^{\alpha} \Z)^{\times}$ on the group
	$\Z/p^{\alpha}\Z$.    Before proceeding, it is helpful to note that
	\begin{equation}\label{eq-RamanujanPrimePowers}
		c_{p^m}(x) = 
		\begin{cases}
			p^{m-1}(p-1) & \text{if $p^m | x$},\\
			-p^{m-1} & \text{if $p^m \nmid x$ but $p^{m-1} | x$},\\
			0 & \text{otherwise},
		\end{cases}
	\end{equation}
	which can be computed easily from the definition \eqref{eq-RamanujanSumDefinition}
	and the formula for the sum of a finite geometric series.  Among other things, we note that
	\begin{equation}\label{eq-PrimePowerMobiusPhi}
		c_{p^m}(1) = \mu(p^m),\qquad
		c_{p^m}(p^m) = \phi(p^m),
	\end{equation}
	where $\mu(n)$ denotes the \emph{M\"obius $\mu$-function}
	\begin{equation}\label{eq-MobiusDefinition}
		\mu(n) = 
		\begin{cases}
			1 & \text{if $n=1$},\\
			0 & \text{if $n$ is not square-free},\\
			(-1)^{\omega} & \text{if $n$ is the product of $\omega$ distinct primes}.
		\end{cases}
	\end{equation}	

	Since the divisors of $p^{\alpha}$ are precisely the numbers $d_i = p^{i-1}$ for $i=1,2,\ldots,\alpha+1$,
	\eqref{eq-Sij} and \eqref{eq-RamanujanPrimePowers} tell us that the $(i,j)$ entry
	of the $(\alpha+1) \times (\alpha+1)$ matrix $S = S(p^{\alpha})$ is given by
	\begin{equation}\label{eq-PrimePowerSij}
		\big[S(p^{\alpha})\big]_{i,j} = c_{p^{i-1}}( p^{\alpha-j+1}) 
		= 
		\begin{cases}
			1 & \text{if $i =1$},\\
			p^{i-2}(p-1) & \text{if $i+j \leq \alpha+2$},\\
			-p^{i-2} & \text{if $i+j = \alpha+3$},\\
			0 & \text{if $i+j > \alpha+3$}.
		\end{cases}
	\end{equation}
	For instance, the supercharacter tables $S(p^2)$ and $S(p^3)$ are given by
	\begin{equation}\label{eq-AIA}
		\left[
		\begin{array}{c|cc}
			1 & 1 & 1 \\
			p-1 & p-1 & -1 \\
			\hline
			p(p-1) & -p & 0
		\end{array}
		\right],
		\qquad
		\left[
		\begin{array}{c|ccc}
			1 & 1 & 1 & 1\\
			p-1 & p-1 & p-1 & -1 \\
			p(p-1) & p(p-1) & -p & 0\\
			\hline
			p^2(p-1) & -p^2 & 0 & 0
		\end{array}
		\right],
	\end{equation}
	respectively.  In general, 
	$S(p^{\alpha-1})$ appears as the upper-right hand corner of $S(p^{\alpha})$,
	as illustrated in \eqref{eq-AIA}.		
	Let us also note, for future reference, that
	\begin{equation}\label{eq-TraceS}
		\tr S(p^{\alpha}) = 
		\begin{cases}
			p^{\frac{\alpha}{2}} & \text{if $\alpha$ is even},\\
			0 & \text{if $\alpha$ is odd},
		\end{cases}		
	\end{equation}	
	follows from \eqref{eq-PrimePowerSij} and a telescoping series argument.  

	For some purposes, it is more fruitful to consider the associated unitary matrix
	$U = U(p^{\alpha})$, defined by \eqref{eq-ExplicitU}, in place of $S(p^{\alpha})$ itself.  Setting
	$n = p^{\alpha}$, $d_i = p^{i-1}$, and $d_j = p^{j-1}$ in \eqref{eq-ExplicitU}, we find that
	\begin{equation}\label{eq-PPU}
		\big[U(p^{\alpha})\big]_{i,j}
		= \frac{ c_{p^{i-1}} ( p^{\alpha-j+1}) \sqrt{ \phi(p^{j-1})} }{p^{\alpha/2}\sqrt{ \phi(p^{i-1})}} .
	\end{equation}
	A few simple computations reveal that
	\begin{equation}\label{eq-NastyU}
		\big[U(p^{\alpha})\big]_{i,j} = 
		\begin{cases}
			p^{-\frac{\alpha}{2}} & \text{if $i = j = 1$},\\[2pt]
			p^{\frac{j-\alpha-2}{2}}\sqrt{p-1}  & \text{if $i =1$ and $j > 1$},\\[2pt]
			p^{\frac{i-\alpha-2}{2}}\sqrt{p-1} & \text{if $j =1$ and $i > 1$},\\[2pt]
			(p-1)p^{\frac{i+j-\alpha-4}{2}} & \text{if $3\leq i+j \leq \alpha+2$},\\[2pt]
			-\frac{1}{\sqrt{p}} & \text{if $3 \leq i+j = \alpha+3$},\\[2pt]
			0 & \text{if $i+j > \alpha+3$}.
		\end{cases}
	\end{equation}
	Despite its somewhat imposing appearance, the preceding expression tells us that
	the $(\alpha+1) \times (\alpha+1)$ unitary matrix $U$ is selfadjoint and that its lower right $\alpha\times \alpha$ submatrix is a Hankel matrix.
	This is illustrated in the following example.
	
	\begin{Example}
		\begin{equation*}\small
			U(2^6)=
			\left[
			\begin{array}{c|cccccc}
			 \frac{1}{8} & \frac{1}{8} & \frac{1}{4 \sqrt{2}} & \frac{1}{4} & \frac{1}{2 \sqrt{2}} & \frac{1}{2} & \frac{1}{\sqrt{2}} \\[3pt]
			 \hline
			 \frac{1}{8} & \frac{1}{8} & \frac{1}{4 \sqrt{2}} & \frac{1}{4} & \frac{1}{2 \sqrt{2}} & \frac{1}{2} & -\frac{1}{\sqrt{2}} \\[3pt]
			 \frac{1}{4 \sqrt{2}} & \frac{1}{4 \sqrt{2}} & \frac{1}{4} & \frac{1}{2 \sqrt{2}} & \frac{1}{2} & -\frac{1}{\sqrt{2}} & 0 \\[3pt]
			 \frac{1}{4} & \frac{1}{4} & \frac{1}{2 \sqrt{2}} & \frac{1}{2} & -\frac{1}{\sqrt{2}} & 0 & 0 \\[3pt]
			 \frac{1}{2 \sqrt{2}} & \frac{1}{2 \sqrt{2}} & \frac{1}{2} & -\frac{1}{\sqrt{2}} & 0 & 0 & 0 \\[3pt]
			 \frac{1}{2} & \frac{1}{2} & -\frac{1}{\sqrt{2}} & 0 & 0 & 0 & 0 \\[3pt]
			 \frac{1}{\sqrt{2}} & -\frac{1}{\sqrt{2}} & 0 & 0 & 0 & 0 & 0
			\end{array}
			\right].
		\end{equation*}
	\end{Example}

	Since $U = U(n)$ is unitary and selfadjoint, it follows that
	\begin{equation}\label{eq-Involution}
		U^2 = I.
	\end{equation}
	Therefore the only possible eigenvalues of $U$ are $\pm 1$.  The exact multiplicities
	of these eigenvalues can be determined using \eqref{eq-Similar}, which asserts that $p^{\alpha/2} U$ is
	similar to $S$.  It follows from \eqref{eq-TraceS} that
	\begin{equation}\label{eq-TraceU}
		\tr U = 
		\begin{cases}
			1 & \text{if $\alpha$ is even},\\
			0 & \text{if $\alpha$ is odd}.
		\end{cases}				
	\end{equation}
	Since $U$ is $(\alpha+1) \times (\alpha+1)$, it follows that the eigenvalues of $U$
	are $-1$ (multiplicity $\frac{\alpha}{2}$) and $1$ (multiplicity $\frac{\alpha}{2}+1$) if $\alpha$ is even; and
	$\pm 1$ (both with multiplicity $\frac{\alpha+1}{2}$) if $\alpha$ is odd.  Since
	\begin{equation*}
		\lfloor \tfrac{\alpha+1}{2} \rfloor =
		\begin{cases}
			\frac{\alpha}{2} & \text{if $\alpha$ is even},\\[3pt]
			\frac{\alpha+1}{2} & \text{if $\alpha$ is odd},
		\end{cases}
	\end{equation*}
	we conclude that
	\begin{equation}\label{eq-DetU}
		\det U(p^{\alpha}) = (-1)^{ \lfloor \frac{\alpha+1}{2} \rfloor}.
	\end{equation}
	We will make use of this formula later on.

\subsection{Kronecker products}
	Recall that the \emph{Kronecker product}
	$A \otimes B$ of a $m \times n$ matrix $A$ and a $p \times q$ matrix $B$ is the $mp \times nq$ matrix given by
	\begin{equation*}
		A \otimes B = 
		\begin{bmatrix} a_{11} B & \cdots & a_{1n}B \\ \vdots & \ddots & \vdots \\ a_{m1} B & \cdots & a_{mn} B \end{bmatrix}. 
	\end{equation*}
	Whenever the dimensions of the matrices involved are compatible
	we have $A \otimes B \cong B \otimes A$ and 
	\begin{equation*}
		(A\otimes B)(C\otimes D) \cong AC \otimes BD,
	\end{equation*}
	where, as briefly mentioned in Subsection \ref{SubsectionAutH}, $\cong$ denotes similarity
	via a permutation matrix.  Finally, we also recall that 
	if $A$ is $m \times m$ and $B$ is $n \times n$, then 
	\begin{equation}\label{eq-KroneckerDeterminant}
		\det(A \otimes B) = (\det A)^n(\det B)^m
	\end{equation}
	and
	\begin{equation}\label{eq-KroneckerTrace}
		\tr(A \otimes B) = (\tr A)(\tr B).
	\end{equation}

\subsection{Products of supercharacter theories}

	Given two supercharacter theories $(\X_1,\K_1)$ and $(\X_2,\K_2)$ on 
	two finite groups $G_1$ and $G_2$, one can construct a natural product supercharacter 
	theory on $G_1 \times G_2$.  Writing 
	\begin{equation*}
		\K_1 = \{ K_1^{(1)}, K_2^{(1)},\ldots,K_r^{(1)}\},\qquad
		\K_2 = \{ K_1^{(2)}, K_2^{(2)},\ldots,K_s^{(2)}\},
	\end{equation*}
	and
	\begin{equation*}
		\X_1 = \{ X_1^{(1)}, X_2^{(1)},\ldots,X_r^{(1)}\},\qquad
		\X_2 = \{ X_1^{(2)}, X_2^{(2)},\ldots,X_s^{(2)}\},
	\end{equation*}
	we first define
	\begin{equation}\label{eq-KKK}
		\K = \K_1 \times \K_2.
	\end{equation}
	On the other hand, since \cite[Thm.~4.21]{Isaacs} tells us that
	\begin{equation*}
		\Irr(G_1\times G_2) = \Irr (G_1) \times \Irr (G_2),
	\end{equation*}
	it is natural for us to define
	\begin{equation}\label{eq-XXX}
		\X = \X_1 \times \X_2.
	\end{equation}
	A straightforward computation now shows that
	\begin{equation}\label{eq-TensorSigma}
		\sigma_{X_i^{(1)} \times X_j^{(2)}} \big( (g_1,g_2) \big) = \sigma_{X_i^{(1)}}(g_1) \sigma_{X_j^{(2)}}(g_2),
	\end{equation}
	whenever $g_1$ and $g_2$ belong to $G_1$ and $G_2$, respectively.
	In particular, this implies that $\sigma_{X_1 \times X_2}$ is constant on each element of $\K$.
	Since $|\X| = |\X_1||\X_2| = |\K_1||\K_2| = |\K|$, we conclude that 
	$(\X,\K)$ is a supercharacter theory on $G_1 \times G_2$.  
		
	Putting this all together, \eqref{eq-TensorSigma} tells us that if $S_1$ and $S_2$ are the matrices which encode
	the supercharacter tables corresponding to the supercharacter theories
	$(\X_1,\K_1)$ and $(\X_2,\K_2)$ on $G_1$ and $G_2$, respectively, then the Kronecker product $S_1 \otimes S_2$
	encodes the product supercharacter theory $(\X,\K)$ on $G_1 \times G_2$.  To be more specific,
	we list the elements of $\K$ and $\X$ in their respective lexicographic orders induced by the product
	structures \eqref{eq-KKK} and \eqref{eq-XXX}.  In light of \eqref{eq-TensorSigma},
	we see that the resulting supercharacter table $S$ for the product theory $(\X,\K)$ on $G_1\times G_2$ satisfies
	$S \cong S_1 \otimes S_2$.

	The details of the preceding construction were worked out by A.O.F.~Hendrickson, a student of Isaacs,
	in his doctoral thesis \cite[Sect.~2.6]{HendricksonThesis}.  We refer the reader there and to his recent
	paper \cite{Hendrickson} for further information.
	
\subsection{Multiplicativity}
	Recall that if $G_1$ and $G_2$ are finite groups, then 
	\begin{equation*}
		\Aut(G_1) \times \Aut(G_2) \subseteq \Aut(G_1 \times G_2),
	\end{equation*}
	although equality does not hold in general.  
	We are interested here in the special case where $G_1 = \Z / m \Z$, $G_2 = \Z / n \Z$,
	and $(m,n)=1$.  In this setting, 
	\begin{equation}\label{eq-CRT}
		(\Z / m \Z) \times (\Z / n \Z) \cong \Z / mn \Z,
	\end{equation}
	so that if we indulge in a slight abuse of language, we obtain
	\begin{equation*}
		\Aut ( \Z / m \Z) \times \Aut (\Z / n \Z) \subseteq \Aut(\Z / mn \Z),
	\end{equation*}
	or equivalently,
	\begin{equation*}
		(\Z / m\Z)^{\times} \times (\Z/n\Z)^{\times} \subseteq (\Z / mn \Z)^{\times}.
	\end{equation*}
	Since the orders of the preceding groups are $\phi(m)$, $\phi(n)$, and $\phi(mn)$, respectively,
	it follows from the multiplicativity of the Euler totient function that
	\begin{equation*}
		\Aut ( \Z / m \Z) \times \Aut (\Z / n \Z) \cong \Aut(\Z / mn \Z).
	\end{equation*}
	Thus the product supercharacter theory for $\Z /mn \Z$, obtained from the supercharacter theories 
	for $\Z/m\Z$ and $\Z/n\Z$ induced by $\Aut(\Z/m\Z)$ and $\Aut(\Z/n\Z)$, respectively,
	is the same supercharacter theory for $\Z/mn\Z$ which arises
	from the action of $\Aut(\Z /mn\Z$).  In other words, 
	\begin{equation}\label{eq-Smn}
		S(mn)  \cong S(m) \otimes S(n)
	\end{equation}
	whenever $(m,n) = 1$.  In particular, it follows from \eqref{eq-TensorSigma} and the Chinese Remainder Theorem that
	\begin{equation}\label{eq-ccdd}
		\boxed{ c_{mn}(dd') = c_m(d) c_n(d') }
	\end{equation}
	whenever $d$ and $d'$ are positive divisors of $m$ and $n$, respectively.  Indeed,
	first let $G_1 = \Z/m\Z$ and $G_2 = \Z/n\Z$, with
	\begin{equation*}
		K_{\tau(m)}^1 = \{ a \in \Z/m\Z : (a,m) = 1\},\qquad
		K_{\tau(n)}^2 = \{ b \in \Z/n\Z : (a,n) = 1\},
	\end{equation*}
	and observe that the map $\Phi: \Z/m\Z\times \Z/n\Z\to \Z/mn\Z$ defined by
	$\Phi\big( (a,b) \big) = ab\, (\operatorname{mod} mn)$ is an isomorphism.
	In particular, this implies that
	\begin{equation*}
		\Phi\big( K_{\tau(m)}^1 \times K_{\tau(n)}^2\big) = \{ c \in \Z/mn\Z : (c,mn) = 1\}
	\end{equation*}
	and that 
	\begin{equation*}
		\Phi\big( (d,d') \big) = dd'\,(\operatorname{mod} mn)
	\end{equation*}
	whenever $d|m$ and $d'|n$.  Putting this all together and using \eqref{eq-TensorSigma}
	leads to the desired formula \eqref{eq-ccdd}.

	Now recall that when we originally defined the supercharacter table $S(n)$
	for $\Z/n\Z$ (see Subsection \ref{SubsectionAutH}), we were not particular about the manner in which the divisors of $n$ were listed.
	The reason for this lack of specificity is due to the fact that even though the Kronecker product $S(m) \otimes S(n)$
	represents a supercharacter table for $\Z / mn \Z$ arising from the action of $\Aut (\Z / mn\Z)$, 
	the ordering of the superclasses and
	supercharacters in the product table might differ from what one might consider a ``natural'' ordering
	(e.g., the ordering induced by listing the divisors of $mn$ in increasing or decreasing order).  
	However, this poses no difficulty in practice since much of our work will involve similarity invariants of matrices.
	
	\begin{Example}
		For $m = 4$ and $n = 5$ and using the ordered divisor lists $\{1,2,4\}$ and $\{1,5\}$, respectively, we obtain
		\begin{equation*}
			S(4) = 
			\begin{bmatrix}
				 1 & 1 & 1 \\
				 1 & 1 & -1 \\
				 2 & -2 & 0
			\end{bmatrix},\qquad
			S(5) = \minimatrix{1}{1}{4}{-1},
		\end{equation*}
		from \eqref{eq-PrimePowerSij}.  Using the ordered divisor list $\{1,2,4,5,10,20\}$ for $mn=20$ and
		computing $S(20)$ directly from \eqref{eq-Sij} and the definition of Ramanujan sums yields
		\begin{equation*}
			S(20) = \small
			\left[
			\begin{array}{ccc|ccc}
				 1 & 1 & 1 & 1 & 1 & 1 \\
				 1 & 1 & -1 & 1 & 1 & -1 \\
				 2 & -2 & 0 & 2 & -2 & 0 \\
				 \hline
				 4 & 4 & 4 & -1 & -1 & -1 \\
				 4 & 4 & -4 & -1 & -1 & 1 \\
				 8 & -8 & 0 & -2 & 2 & 0
			\end{array}
			\right] = S(5) \otimes S(4).
		\end{equation*}
		In the preceding we have partitioned the matrix $S(20)$ for clarity.
	\end{Example}

	An important consequence of \eqref{eq-ccdd} is the fact that Ramanujan sums $c_n(x)$ are multiplicative 
	with respect to the subscript $n$.
		
	\begin{Theorem}\label{TheoremMultiplicative}
		If $(m,n)=1$, then 
		\begin{equation*}
			\boxed{c_{mn}(x) = c_m(x) c_n(x)}
		\end{equation*}
		for all $x$ in $\Z$.
	\end{Theorem}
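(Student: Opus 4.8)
The plan is to combine the ``even function'' property \eqref{eq-DependsGCD} with the divisor-level product formula \eqref{eq-ccdd}, which has just been established. Fix an arbitrary $x$ in $\Z$ and set $d = (x,m)$ and $d' = (x,n)$, so that $d \mid m$ and $d' \mid n$.

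The only genuinely new ingredient is an elementary number-theoretic fact: since $(m,n) = 1$, the greatest common divisor is multiplicative in the relevant sense, namely $(x,mn) = (x,m)(x,n) = dd'$. This follows prime by prime, or by noting that $(d,d') = 1$ (because $d \mid m$, $d' \mid n$, and $(m,n) = 1$), that $dd'$ divides both $x$ and $mn$, and that conversely any common divisor of $x$ and $mn$ factors as a product of a divisor of $m$ and a divisor of $n$.

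With this in hand the proof is a short chain of equalities. First, applying \eqref{eq-DependsGCD} to the modulus $mn$ gives $c_{mn}(x) = c_{mn}\big((x,mn)\big) = c_{mn}(dd')$. Next, since $d \mid m$ and $d' \mid n$, the identity \eqref{eq-ccdd} applies and yields $c_{mn}(dd') = c_m(d)\,c_n(d')$. Finally, applying \eqref{eq-DependsGCD} again, now to the moduli $m$ and $n$ separately, gives $c_m(d) = c_m\big((x,m)\big) = c_m(x)$ and $c_n(d') = c_n\big((x,n)\big) = c_n(x)$. Stringing these together produces $c_{mn}(x) = c_m(x)\,c_n(x)$, as claimed.

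There is essentially no obstacle: the substantive content has already been packaged into \eqref{eq-ccdd}, which rests on the product supercharacter theory \eqref{eq-Smn} and the Chinese Remainder Theorem. If one preferred to bypass even the gcd identity, one could argue purely representation-theoretically, observing that under the isomorphism \eqref{eq-CRT} the superclass of $x$ in $\Z/mn\Z$ corresponds to the product of the superclass of $x \bmod m$ in $\Z/m\Z$ with that of $x \bmod n$ in $\Z/n\Z$, so that \eqref{eq-TensorSigma} gives the factorization of supercharacter values immediately. Either way, once the machinery of the previous section is in place the argument is a single line.
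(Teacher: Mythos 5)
Your proof is correct and is essentially identical to the paper's: both apply \eqref{eq-DependsGCD} to reduce to the divisor case, use the gcd identity $(x,mn)=(x,m)(x,n)$, and then invoke \eqref{eq-ccdd}. You merely spell out the elementary gcd multiplicativity step that the paper leaves implicit.
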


	\begin{proof}
		Since $(m,n) = 1$, it follows from \eqref{eq-ccdd} that
		\begin{equation*}
			 c_{mn} \big( (x,mn) \big) = 
			c_{mn}\big( (x,m)(x,n) \big) =
			c_m \big( (x,m) \big) c_n \big( (x,n) \big)
		\end{equation*}
		whence $c_{mn}(x) = c_m(x) c_n(x)$ by \eqref{eq-DependsGCD}.
	\end{proof}
	
	\begin{Corollary}\label{CorollaryMuPhi}
		For $n \geq 1$ we have
		\begin{equation*}
			\boxed{
			c_n(1) = \mu(n),\qquad
			c_n(n) = \phi(n).
			}
		\end{equation*}
		Furthermore, $c_n(x)$ is always an integer.
	\end{Corollary}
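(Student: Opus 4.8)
The plan is to reduce everything to the prime-power case already recorded in \eqref{eq-PrimePowerMobiusPhi} and \eqref{eq-RamanujanPrimePowers}, using Theorem~\ref{TheoremMultiplicative} together with the gcd-dependence \eqref{eq-DependsGCD}. Write $n = p_1^{a_1}\cdots p_r^{a_r}$ for the prime factorization of $n$. Since the prime powers $p_i^{a_i}$ are pairwise coprime, repeated application of Theorem~\ref{TheoremMultiplicative} gives $c_n(x) = \prod_{i=1}^r c_{p_i^{a_i}}(x)$ for every integer $x$. Everything then comes down to identifying the individual factors.

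First I would handle $c_n(1)$: by the above, $c_n(1) = \prod_{i=1}^r c_{p_i^{a_i}}(1) = \prod_{i=1}^r \mu(p_i^{a_i})$ using \eqref{eq-PrimePowerMobiusPhi}, and since the M\"obius function is multiplicative this product equals $\mu(n)$. For $c_n(n)$ one argues similarly: for each $i$, \eqref{eq-DependsGCD} yields $c_{p_i^{a_i}}(n) = c_{p_i^{a_i}}\big((n,p_i^{a_i})\big) = c_{p_i^{a_i}}(p_i^{a_i}) = \phi(p_i^{a_i})$ by \eqref{eq-PrimePowerMobiusPhi}, and multiplicativity of the Euler totient then gives $c_n(n) = \prod_i \phi(p_i^{a_i}) = \phi(n)$. (Alternatively, $c_n(n) = c_n(0) = \sum_{(j,n)=1} 1 = \phi(n)$ directly from the definition \eqref{eq-RamanujanSumDefinition}, which makes this half essentially trivial.)

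Finally, for integrality I would once more write $c_n(x) = \prod_{i=1}^r c_{p_i^{a_i}}(x)$ and observe that, by \eqref{eq-DependsGCD}, $c_{p_i^{a_i}}(x) = c_{p_i^{a_i}}\big((x,p_i^{a_i})\big) = c_{p_i^{a_i}}(p_i^{b_i})$ where $p_i^{b_i} = (x,p_i^{a_i})$; the explicit formula \eqref{eq-RamanujanPrimePowers} shows each such value lies in $\Z$, and a product of integers is an integer. I do not anticipate any genuine obstacle here: the only points requiring a little care are the bookkeeping of the reduction to divisors via \eqref{eq-DependsGCD}, the appeal to multiplicativity of $\mu$ and $\phi$, and the degenerate case $n=1$, which is covered by the empty-product conventions together with $c_1(x) = 1 = \mu(1) = \phi(1)$.
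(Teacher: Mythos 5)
Your proposal is correct and follows essentially the same route as the paper: factor $n$ into prime powers, apply Theorem~\ref{TheoremMultiplicative} repeatedly, and invoke the explicit prime-power values \eqref{eq-PrimePowerMobiusPhi} together with the multiplicativity of $\mu$ and $\phi$ (and \eqref{eq-DependsGCD} to reduce arguments to divisors). You merely spell out the bookkeeping that the paper's two-sentence proof leaves implicit.
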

	
	\begin{proof}
		The boxed statements follow from \eqref{eq-PrimePowerMobiusPhi} and Theorem \ref{TheoremMultiplicative}.
		The integrality of $c_n(x)$ follows from Theorem \ref{TheoremMultiplicative} and the explicit values given 
		in \eqref{eq-PrimePowerSij}.
	\end{proof}

	The proof of Theorem \ref{TheoremMultiplicative} suggests an interesting variant
	\cite[Ex.~2.2, p.~89]{McCarthyBook}:

	\begin{Theorem}
		If $(mx,ny)=1$, then
		\begin{equation}\label{eq-cmnxy}
			\boxed{ c_{mn}(xy) = c_m(x) c_n(y).}
		\end{equation}
	\end{Theorem}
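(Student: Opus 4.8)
The plan is to reduce \eqref{eq-cmnxy} to the already-established identities \eqref{eq-ccdd} and \eqref{eq-DependsGCD} by first rewriting each side in terms of the relevant $\gcd$'s. The key observation is that the hypothesis $(mx,ny)=1$ is considerably stronger than $(m,n)=1$: it forces $(m,n)=1$, $(x,y)=1$, $(x,n)=1$, and $(y,m)=1$ all simultaneously, since each of these is the $\gcd$ of a divisor of $mx$ with a divisor of $ny$. I would open the proof by recording these four coprimality consequences explicitly, as they are exactly what is needed to make the gcd bookkeeping go through.

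Next I would compute the subscript gcd on the left-hand side. Since $(m,n)=1$ we have $(xy,mn)=(xy,m)(xy,n)$, and then using $(y,m)=1$ and $(x,n)=1$ this collapses to $(xy,mn)=(x,m)(y,n)$. By \eqref{eq-DependsGCD} applied with modulus $mn$, we get $c_{mn}(xy)=c_{mn}\big((x,m)(y,n)\big)$. Now $(x,m)$ is a divisor of $m$ and $(y,n)$ is a divisor of $n$, so \eqref{eq-ccdd} applies directly and yields $c_{mn}\big((x,m)(y,n)\big)=c_m\big((x,m)\big)\,c_n\big((y,n)\big)$. A final application of \eqref{eq-DependsGCD}, once with modulus $m$ and once with modulus $n$, rewrites this as $c_m(x)\,c_n(y)$, which is the desired conclusion.

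I expect the main obstacle — really the only subtle point — to be the gcd manipulation $(xy,mn)=(x,m)(y,n)$, which silently uses all four coprimality facts extracted in the first step; one has to be careful that, e.g., a prime dividing both $x$ and $n$ is excluded precisely by $(x,n)=1$. Everything else is a mechanical invocation of boxed identities already proved in the excerpt, so the write-up should be short. As a structural remark, this is the same three-move pattern (pass to gcd via \eqref{eq-DependsGCD}, split via \eqref{eq-ccdd}, pass back via \eqref{eq-DependsGCD}) used in the proof of Theorem~\ref{TheoremMultiplicative}; the present theorem is the version in which the arguments of $c_m$ and $c_n$ are allowed to differ, and the stronger hypothesis $(mx,ny)=1$ is exactly the price one pays for that extra freedom.
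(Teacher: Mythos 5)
Your proposal is correct and follows essentially the same route as the paper: the paper's proof likewise extracts the coprimality facts $(m,n)=(x,n)=(y,m)=1$ from the hypothesis, writes $c_{mn}\big((xy,mn)\big)=c_{mn}\big((x,m)(y,n)\big)=c_m\big((x,m)\big)c_n\big((y,n)\big)$ via \eqref{eq-ccdd}, and concludes with \eqref{eq-DependsGCD}. Your write-up merely makes the gcd bookkeeping more explicit than the paper does.
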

	
	\begin{proof}
		Since $(m,n)=(x,n) = (y,m) =1$, it follows from \eqref{eq-ccdd} that 
		\begin{equation*}
			c_{mn}\big( (xy,mn) \big) = c_{mn}\big( (x,m)(y,n) \big)
			= c_m\big( (x,m) \big)c_n\big( (y,n) \big),
		\end{equation*}
		whence $c_{mn}(xy) = c_m(x) c_n(y)$ by \eqref{eq-DependsGCD}.
	\end{proof}
	
	\begin{Corollary}\label{CorollarySupermultiplicative}
		If $n = p_1^{\alpha_1} p_2^{\alpha_2}\cdots p_r^{\alpha_r}$ is the canonical factorization
		of $n$ into distinct primes $p_1,p_2,\ldots, p_r$ and $d = p_1^{\beta_1} p_2^{\beta_2} \cdots p_r^{\beta_r}$
		is a divisor of $n$, then
		\begin{equation*}
			c_n(d) = \prod_{\ell=1}^r c_{p_{\ell}^{\alpha_{\ell}}}(p_{\ell}^{\beta_{\ell}}).
		\end{equation*}
	\end{Corollary}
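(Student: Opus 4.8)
The plan is to obtain the formula by iterating the boxed identity \eqref{eq-ccdd}, which already handles the case of two coprime moduli whose divisors are split accordingly. I would argue by induction on the number $r$ of distinct prime factors of $n$. The base case $r=1$ is immediate: there $n=p_1^{\alpha_1}$ and $d=p_1^{\beta_1}$, so the asserted product is a single factor and there is nothing to prove.

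For the inductive step, write $n = m n'$ with $m = p_1^{\alpha_1}$ and $n' = p_2^{\alpha_2}\cdots p_r^{\alpha_r}$, and correspondingly factor $d = e\,e'$ with $e = p_1^{\beta_1}$ and $e' = p_2^{\beta_2}\cdots p_r^{\beta_r}$. Since the primes $p_\ell$ are distinct we have $(m,n')=1$, and because $d\mid n$ we have $\beta_\ell\le\alpha_\ell$ for every $\ell$, so $e\mid m$ and $e'\mid n'$. Thus \eqref{eq-ccdd} applies and gives $c_n(d) = c_m(e)\,c_{n'}(e') = c_{p_1^{\alpha_1}}(p_1^{\beta_1})\,c_{n'}(e')$; invoking the inductive hypothesis on $c_{n'}(e')$ then produces the full product $\prod_{\ell=1}^r c_{p_\ell^{\alpha_\ell}}(p_\ell^{\beta_\ell})$.

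There is no genuine obstacle here; the only point that deserves a moment's attention is the observation that $d\mid n$ is precisely what guarantees $e\mid m$ and $e'\mid n'$, so that the hypotheses of \eqref{eq-ccdd} are met at each stage. Alternatively, one can bypass the induction: repeated use of Theorem \ref{TheoremMultiplicative} yields $c_n(d)=\prod_{\ell=1}^r c_{p_\ell^{\alpha_\ell}}(d)$, and then, since $(d,p_\ell^{\alpha_\ell})=p_\ell^{\beta_\ell}$, the boxed identity \eqref{eq-DependsGCD} replaces each factor $c_{p_\ell^{\alpha_\ell}}(d)$ by $c_{p_\ell^{\alpha_\ell}}(p_\ell^{\beta_\ell})$, again giving the result.
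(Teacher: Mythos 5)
Your proof is correct and follows essentially the route the paper intends: the corollary is stated without an explicit proof as an immediate consequence of the preceding multiplicativity results, namely iterating \eqref{eq-ccdd} (equivalently \eqref{eq-cmnxy}) over the coprime prime-power factors, which is exactly your induction. Your alternative via Theorem \ref{TheoremMultiplicative} together with \eqref{eq-DependsGCD} is also valid and equally in the spirit of the paper.
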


\subsection{Piecing things together}
	The following useful result permits us to deduce a variety of results about Ramanujan
	sums by piecing together our observations from Subsection \ref{SubsectionPrimePowers}.
	In the present setting, recall that product supercharacter theories correspond to
	Kronecker products of supercharacter tables.

	\begin{Theorem}\label{TheoremSU}
		If $n = p_1^{\alpha_1} p_2^{\alpha_2}\cdots p_r^{\alpha_r}$ is the canonical factorization
		of $n$ into distinct primes $p_1,p_2,\ldots, p_r$, then
		\begin{equation}\label{eq-Tensor}
			S(n) \cong \bigotimes_{i=1}^{r} S(p_i^{\alpha_i}),\qquad
			U(n) \cong \bigotimes_{i=1}^{r} U(p_i^{\alpha_i}).		
		\end{equation}
		In particular, $U(n)$ is a selfadjoint, unitary involution whose $(i,j)$ entry is given by
		\begin{equation}\label{eq-Unij}
			\big[U(n)\big]_{i,j} =  \frac{1}{\sqrt{n}}c_{d_i}\left( \frac{n}{d_j}\right) \sqrt{ \frac{\phi(d_j)}{\phi(d_i)} },
		\end{equation}
		where $d_1,d_2,\ldots,d_{\tau(n)}$ are the positive divisors of $n$.
		We also have $S(n)^2 = n I$.
	\end{Theorem}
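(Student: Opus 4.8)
The plan is to bootstrap everything from the prime-power analysis of Subsection~\ref{SubsectionPrimePowers} together with the multiplicativity established via product supercharacter theories. First I would prove the two similarities in~\eqref{eq-Tensor} by induction on $r$. The base case $r=1$ is trivial. For the inductive step, write $n = m \cdot p_r^{\alpha_r}$ where $m = p_1^{\alpha_1}\cdots p_{r-1}^{\alpha_{r-1}}$, so that $(m, p_r^{\alpha_r}) = 1$; then \eqref{eq-Smn} gives $S(n) \cong S(m) \otimes S(p_r^{\alpha_r})$, and the inductive hypothesis $S(m) \cong \bigotimes_{i=1}^{r-1} S(p_i^{\alpha_i})$ combined with the fact that $A \cong A'$ implies $A \otimes B \cong A' \otimes B$ (permutation-conjugating one tensor factor) yields the first claim. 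The statement for $U(n)$ follows identically once we observe that \eqref{eq-Similar} shows $U$ is obtained from $S$ by conjugating with the diagonal matrix $R$, and that conjugation commutes appropriately with Kronecker products; alternatively, one can note that the product construction of Subsection on products of supercharacter theories, applied with the \emph{unitary rescaling} built in, directly yields $U(mn) \cong U(m) \otimes U(n)$.

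Next I would deduce that $U(n)$ is a selfadjoint unitary involution. Each $U(p_i^{\alpha_i})$ is selfadjoint and unitary (this is the content of \eqref{eq-NastyU} and \eqref{eq-Involution}), the Kronecker product of selfadjoint matrices is selfadjoint, the Kronecker product of unitaries is unitary, and conjugation by a permutation matrix preserves both properties; hence $U(n)^* = U(n)$ and $U(n)^* U(n) = I$, so $U(n)^2 = I$.

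For the entrywise formula~\eqref{eq-Unij}, I would simply specialize the general formula~\eqref{eq-ExplicitU} (equivalently~\eqref{eq-UnitaryMatrix}) to $G = \Z/n\Z$. Using \eqref{eq-XDI}, \eqref{eq-KDI}, and \eqref{eq-SIDF}, the norms $\norm{X_i}_2$ equal $\sqrt{\phi(d_i)}$ (since each $X_i$ consists of $\phi(d_i)$ linear characters), and $\sqrt{|K_j|} = \sqrt{\phi(d_j)}$, giving exactly $[U(n)]_{i,j} = \tfrac{1}{\sqrt n}\, c_{d_i}(n/d_j)\sqrt{\phi(d_j)/\phi(d_i)}$. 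Finally, $S(n)^2 = nI$ follows from \eqref{eq-Similar}: that equation states $\sqrt{n}\,U = R^{-1} S R$, so $nI = n U^2 = R^{-1} S^2 R$, whence $S^2 = nI$ because the right-hand side is similar to $nI$ and $nI$ is central.

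The main obstacle is purely bookkeeping rather than conceptual: one must be careful that the similarity $\cong$ in \eqref{eq-Tensor} genuinely transports the selfadjointness of the tensor factors to $U(n)$ in a chosen basis, i.e.\ that the permutation reindexing the superclasses/supercharacters is the \emph{same} on rows and columns. This holds because the underlying supercharacter theory is self-paired---superclasses and supercharacters are both indexed by the divisor lattice in a matched way---so the reordering is implemented by conjugation $P^{-1}(\cdot)P$ by a single permutation matrix $P$, under which selfadjointness is preserved. Once this is noted, the formula~\eqref{eq-Unij} is basis-independent in the sense that it records the entry corresponding to the divisors $d_i, d_j$ regardless of the listing order, so no inconsistency arises.
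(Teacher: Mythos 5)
Your proposal is correct and follows essentially the same route as the paper: both obtain $S(n)\cong\bigotimes S(p_i^{\alpha_i})$ by iterating \eqref{eq-Smn}, transfer this to $U(n)$ via the diagonal conjugation \eqref{eq-Similar}, inherit selfadjointness and the involution property from the prime-power factors, and deduce $S(n)^2=nI$ from $U^2=I$. The only cosmetic difference is that you read \eqref{eq-Unij} directly off the general formula \eqref{eq-ExplicitU} rather than via \eqref{eq-PPU} and the multiplicativity of $\phi$, which amounts to the same thing.
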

	
	\begin{proof}
		The first matrix identity in \eqref{eq-Tensor} follows immediately from \eqref{eq-Smn}.
		The second identity in \eqref{eq-Tensor} follows from the first identity, the multiplicativity of
		\eqref{eq-PPU}, and the basic properties
		of the Kronecker product, along with
		\eqref{eq-Similar}, \eqref{eq-RightMultiplier}, and \eqref{eq-KDI}.		
		The fact that $U(n)$ is a selfadjoint involution follows from the fact that each $U(p_i^{\alpha_i})$
		is a selfadjoint involution.  The formula \eqref{eq-Unij} is a simple
		consequence of \eqref{eq-PPU} and the multiplicativity of the Euler totient function.  Finally,
		we note that \eqref{eq-Similar} now implies that $S(n)^2 = nI$.		
	\end{proof}

	As a trivial consequence of Theorem \ref{TheoremSU} we obtain
	\cite[Ex.~2.10]{McCarthyBook}:
	
	\begin{Corollary}
		For $n \geq 1$ we have
		\begin{equation*}
			\boxed{
			\sum_{d|n} c_d\left( \frac{n}{d} \right) = 
			\begin{cases}
			0 & \text{if $n$ is not a perfect square}, \\
			\sqrt{n} & \text{if $n$ is a perfect square}.
			\end{cases}
			}
		\end{equation*}
	\end{Corollary}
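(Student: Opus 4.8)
The plan is to compute the sum $\sum_{d|n} c_d(n/d)$ as a trace. Observe that by \eqref{eq-Sij} the diagonal entry of $S(n)$ in the row and column indexed by the divisor $d_i$ is $\big[S(n)\big]_{i,i} = c_{d_i}(n/d_i)$, so that
\begin{equation*}
	\sum_{d|n} c_d\left( \frac{n}{d} \right) = \tr S(n).
\end{equation*}
Since trace is a similarity invariant and \eqref{eq-Similar} gives $\sqrt{n}\,U(n) = R^{-1}S(n)R$, we have $\tr S(n) = \sqrt{n}\,\tr U(n)$. Thus it suffices to compute $\tr U(n)$.

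Next I would invoke Theorem \ref{TheoremSU}, which asserts $U(n) \cong \bigotimes_{i=1}^r U(p_i^{\alpha_i})$ for the canonical factorization $n = p_1^{\alpha_1}\cdots p_r^{\alpha_r}$. Combining the similarity-invariance of trace with the Kronecker trace identity \eqref{eq-KroneckerTrace}, we obtain
\begin{equation*}
	\tr U(n) = \prod_{i=1}^r \tr U(p_i^{\alpha_i}).
\end{equation*}
Now apply \eqref{eq-TraceU}, which states that $\tr U(p^{\alpha})$ equals $1$ when $\alpha$ is even and $0$ when $\alpha$ is odd. Hence the product $\prod_{i=1}^r \tr U(p_i^{\alpha_i})$ equals $1$ precisely when every exponent $\alpha_i$ is even — that is, when $n$ is a perfect square — and equals $0$ otherwise. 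Multiplying back by $\sqrt{n}$ yields exactly the boxed formula.

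There is really no serious obstacle here; the entire argument is a short chain of already-established facts (the trace interpretation of the diagonal of $S(n)$, the similarity $\sqrt{n}\,U(n) = R^{-1}S(n)R$, the Kronecker decomposition of Theorem \ref{TheoremSU}, the multiplicativity of trace under $\otimes$, and the prime-power trace computation \eqref{eq-TraceU}). The only point requiring a moment's care is making explicit that the diagonal entries of $S(n)$, in whatever order the divisors are listed, are precisely the values $c_d(n/d)$ as $d$ ranges over the divisors of $n$ — but this is immediate from \eqref{eq-Sij} upon setting $i = j$, and the choice of ordering is irrelevant since both the sum and the trace are order-independent. One could alternatively bypass $U(n)$ entirely and argue directly from $S(n) \cong \bigotimes_i S(p_i^{\alpha_i})$ together with \eqref{eq-KroneckerTrace} and the prime-power trace formula \eqref{eq-TraceS}, which gives $\tr S(n) = \prod_i \tr S(p_i^{\alpha_i}) = \prod_i (p_i^{\alpha_i/2}$ or $0)$, recovering the same result; I would likely present whichever version is shortest.
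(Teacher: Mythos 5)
Your proposal is correct and essentially identical to the paper's proof: the paper computes $\sum_{d|n} c_d(n/d) = \tr S(n) = \prod_i \tr S(p_i^{\alpha_i})$ directly via \eqref{eq-Tensor} and \eqref{eq-KroneckerTrace}, then applies \eqref{eq-TraceS} — exactly the ``alternative'' you mention in your final sentence. Your primary route through $U(n)$ and \eqref{eq-TraceU} is just a cosmetic variation (the factor $\sqrt{n}$ is pulled out before rather than after taking the product), so there is nothing to criticize.
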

	
	\begin{proof}
		Writing $n = p_1^{\alpha_1} p_2^{\alpha_2}\cdots p_r^{\alpha_r}$ and applying
		\eqref{eq-Tensor} we find that
		\begin{equation*}
			\sum_{d|n} c_d\left( \frac{n}{d} \right) 
			= \tr S(n) 
			= \tr\Big( \bigotimes_{i=1}^{r} S(p_i^{\alpha_i}) \Big)
			= \prod_{i=1}^{r} \tr S(p_i^{\alpha_i}).
		\end{equation*}
		The result now follows immediately from \eqref{eq-TraceS}.
	\end{proof}

	The following corollary of Theorem \ref{TheoremSU} appears to be novel, as we
	were unable to find it in our extensive search of the literature.  In particular, although the
	magnitude of the following determinant is possible to conjecture based on numerical evidence,
	the sign of the determinant is determined by a rather complicated formula which seems difficult to 
	arrive at using other means.

	\begin{Corollary}
		If $n = p_1^{\alpha_1} p_2^{\alpha_2}\cdots p_r^{\alpha_r}$ is the canonical factorization
		of $n$ into distinct primes $p_1,p_2,\ldots, p_r$, then
		\begin{equation}\label{eq-Determinant}
			\boxed{
			\det \Big[ c_{d_i}\Big(\frac{n}{d_j}\Big) \Big]_{i,j=1}^{\tau(n)}
			= n^{\frac{\tau(n)}{2}} (-1)^{\textstyle\sum_{i=1}^r\lfloor \frac{\alpha_i+1}{2} \rfloor  \frac{\tau(n)}{\alpha_i+1}}
			}
		\end{equation}
		where $\tau(n)$ denotes the number of positive divisors 
		$d_1,d_2,\ldots,d_{\tau(n)}$ of $n$.
	\end{Corollary}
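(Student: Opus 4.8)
The plan is to recognize the matrix $\big[c_{d_i}(n/d_j)\big]_{i,j=1}^{\tau(n)}$ as the supercharacter table $S(n)$ of \eqref{eq-Sij} and then to exploit the factorization \eqref{eq-Tensor} furnished by Theorem \ref{TheoremSU}. Since $S(n) \cong \bigotimes_{i=1}^r S(p_i^{\alpha_i})$, where $\cong$ denotes similarity via a permutation matrix, and similar matrices have equal determinants, we have $\det S(n) = \det\big(\bigotimes_{i=1}^r S(p_i^{\alpha_i})\big)$, and all that remains is to evaluate the determinant of this Kronecker product.

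First I would record the determinant of each prime-power block. Specializing the similarity relation \eqref{eq-Similar} to $n = p^{\alpha}$ shows that $S(p^{\alpha})$ is similar to $p^{\alpha/2}U(p^{\alpha})$, and $U(p^{\alpha})$ is $(\alpha+1) \times (\alpha+1)$; hence, using \eqref{eq-DetU},
\begin{equation*}
	\det S(p^{\alpha}) = p^{\alpha(\alpha+1)/2}\det U(p^{\alpha}) = p^{\alpha(\alpha+1)/2}(-1)^{\lfloor (\alpha+1)/2 \rfloor}.
\end{equation*}
Next I would apply the iterated form of the Kronecker determinant identity \eqref{eq-KroneckerDeterminant}: in $\det\big(\bigotimes_i A_i\big)$ with $A_i$ of size $n_i$, each factor $\det A_i$ appears to the power $\prod_{j \neq i} n_j$. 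Here $n_i = \alpha_i + 1$ and $\prod_j n_j = \tau(n)$, so $\det S(p_i^{\alpha_i})$ is raised to the power $\tau(n)/(\alpha_i+1)$, which is an integer since $\alpha_i + 1 \mid \tau(n)$. This yields
\begin{equation*}
	\det S(n) = \prod_{i=1}^r \Big( p_i^{\alpha_i(\alpha_i+1)/2}(-1)^{\lfloor (\alpha_i+1)/2 \rfloor} \Big)^{\tau(n)/(\alpha_i+1)}.
\end{equation*}

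Finally I would simplify. The exponent of $p_i$ collapses to $\alpha_i\tau(n)/2$ after the factor $\alpha_i+1$ cancels, so the product of the prime powers equals $\big(\prod_i p_i^{\alpha_i}\big)^{\tau(n)/2} = n^{\tau(n)/2}$, while the sign contributions combine into $(-1)^{\sum_i \lfloor (\alpha_i+1)/2 \rfloor\, \tau(n)/(\alpha_i+1)}$, which is exactly the right-hand side of \eqref{eq-Determinant}. The argument is essentially routine once the framework is assembled; the only step requiring genuine care is the exponent bookkeeping in the two displays above — confirming that $p^{\alpha/2}$ enters $\det S(p^{\alpha})$ precisely to the $(\alpha+1)$-st power, and that this $(\alpha+1)$ then cancels the denominator of $\tau(n)/(\alpha_i+1)$ so that no fractional exponents survive. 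I do not anticipate any substantive obstacle beyond this clerical check.
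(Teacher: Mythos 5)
Your proposal is correct and follows essentially the same route as the paper: identify the matrix with $S(n)$, invoke the Kronecker factorization of Theorem \ref{TheoremSU} together with \eqref{eq-KroneckerDeterminant}, \eqref{eq-Similar}, and \eqref{eq-DetU}, and collect exponents. The only (immaterial) difference is that the paper extracts the scalar $n^{\tau(n)/2}$ at once by tensoring the unitaries $U(p_i^{\alpha_i})$, whereas you tensor the tables $S(p_i^{\alpha_i})$ and reassemble $n^{\tau(n)/2}$ from the prime-power contributions at the end.
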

	
	\begin{proof}
		Simply observe that
		\begin{align*}
			\det \Big[ c_{d_i}\Big(\frac{n}{d_j}\Big) \Big]_{i,j=1}^{\tau(n)}
			&= \det S(n) && \text{by \eqref{eq-Sij}}\\
			&= n^{\frac{\tau(n)}{2}} \det U(n)  &&\text{by \eqref{eq-Similar}}\\
			&= n^{\frac{\tau(n)}{2}} \det\Big( \bigotimes_{i=1}^{r} U(p_i^{\alpha_i}) \Big) &&\text{by \eqref{eq-Tensor}} \\
			&= n^{\frac{\tau(n)}{2}} \prod_{i=1}^{r}\Big( \det U(p_i^{\alpha_i}) \Big)^{ \frac{\tau(n)}{\alpha_i+1}} 
				&& \text{by \eqref{eq-KroneckerDeterminant}}\\
			&= n^{\frac{\tau(n)}{2}} \prod_{i=1}^{r}\Big( (-1)^{ \lfloor \frac{\alpha_i+1}{2} \rfloor} \Big)^{ \textstyle\frac{\tau(n)}{\alpha_i+1}} 
				&& \text{by \eqref{eq-DetU}}\\
			&= n^{\frac{\tau(n)}{2}} (-1)^{\textstyle\sum_{i=1}^r\lfloor \frac{\alpha_i+1}{2} \rfloor  \frac{\tau(n)}{\alpha_i+1}}.\qedhere
		\end{align*}
	\end{proof}

\subsection{Reciprocity and von Sterneck's formula}
	Our next result requires no proof, for it follows immediately from \eqref{eq-ExplicitU}
	and the fact that $U = U^T$.
	
	\begin{Theorem}[Reciprocity Formula]
		If $d$ and $d'$ are positive divisors of $n$, then
		\begin{equation}\label{eq-Reciprocity}
			\boxed{ c_{d}\left( \frac{n}{d'} \right) \phi(d') = c_{d'} \left( \frac{n}{d} \right) \phi(d). }
		\end{equation}
	\end{Theorem}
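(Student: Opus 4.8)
The plan is to read the identity directly off the symmetry of the unitary matrix $U=U(n)$, exactly as the paper's one-sentence remark promises. First I would invoke Theorem~\ref{TheoremSU}, which tells us that $U(n)$ is a selfadjoint unitary involution whose $(i,j)$ entry is
\begin{equation*}
	[U(n)]_{i,j} = \frac{1}{\sqrt{n}}\, c_{d_i}\!\left( \frac{n}{d_j}\right) \sqrt{ \frac{\phi(d_j)}{\phi(d_i)} }.
\end{equation*}
Since each Ramanujan sum is an integer by Corollary~\ref{CorollaryMuPhi} and $\phi$ takes positive real values, every entry of $U(n)$ is real; hence $U(n)^{*}=\overline{U(n)}^{T}=U(n)^{T}$, and combining this with $U(n)=U(n)^{*}$ gives $U(n)=U(n)^{T}$. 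In other words $[U(n)]_{i,j}=[U(n)]_{j,i}$ for all $i,j$.

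Next I would simply write out this entrywise symmetry for a chosen pair of divisors. Taking $d_i=d$ and $d_j=d'$ and cancelling the common factor $1/\sqrt{n}$ yields
\begin{equation*}
	c_{d}\!\left( \frac{n}{d'}\right) \sqrt{ \frac{\phi(d')}{\phi(d)} } = c_{d'}\!\left( \frac{n}{d}\right) \sqrt{ \frac{\phi(d)}{\phi(d')} },
\end{equation*}
and multiplying both sides by $\sqrt{\phi(d)\phi(d')}$ produces the boxed formula
$c_{d}(n/d')\,\phi(d') = c_{d'}(n/d)\,\phi(d)$. Alternatively, one can bypass Theorem~\ref{TheoremSU} entirely and observe the transpose-symmetry directly in the explicit array~\eqref{eq-ExplicitU}: the $(i,j)$ entry there carries the factor $\sqrt{\phi(d_j)/\phi(d_i)}$ against $c_{d_i}(n/d_j)$, which swaps correctly under $i\leftrightarrow j$ precisely when $c_{d_i}(n/d_j)\phi(d_j)=c_{d_j}(n/d_i)\phi(d_i)$, so the symmetry of $U$ and the reciprocity identity are literally the same statement.

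There is essentially no obstacle. The only point that deserves a moment's care is distinguishing \emph{selfadjoint} from \emph{symmetric}: what we need is $U=U^{T}$, not merely $U=U^{*}$, and the bridge between the two is the reality of the entries, which in turn rests on the integrality of $c_n(x)$ established in Corollary~\ref{CorollaryMuPhi}. Once that bookkeeping is acknowledged, the result indeed requires no further argument.
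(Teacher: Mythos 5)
Your proof is correct and follows essentially the same route as the paper, which derives the identity directly from the explicit form \eqref{eq-ExplicitU} and the symmetry $U=U^{T}$. Your added remark distinguishing selfadjointness from symmetry via the reality of the entries is a reasonable piece of bookkeeping that the paper leaves implicit.
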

	
	Although we have been unable to find \eqref{eq-Reciprocity} in the literature, given the long
	and storied history of the Ramanujan sum, it is certainly possible that we are not the first to have discovered it.  
	In fact, various other reciprocity formulas have also been discussed \cite{JohnsonReciprocity, JohnsonReciprocity2}.
	For our purposes, the importance of \eqref{eq-Reciprocity}
	lies in the fact that it provides a one-line proof of the following important formula
	\cite[Thm.~272]{Hardy}, \cite[Cor.~2.4]{McCarthyBook}, \cite[p.~40]{SchwarzBook}.
	
	\begin{Corollary}[von Sterneck's Formula]
		For $n,x \in \Z$ with $n \geq 1$ we have
		\begin{equation}\label{eq-vonSterneck}
			\boxed{ c_n(x) = \frac{ \mu\left( \frac{n}{(n,x)}\right) \phi(n) }{ \phi\left( \frac{n}{(n,x)}\right)}.}
		\end{equation}
	\end{Corollary}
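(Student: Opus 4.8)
The plan is to obtain \eqref{eq-vonSterneck} as a one-line consequence of the Reciprocity Formula \eqref{eq-Reciprocity}, the evaluation $c_d(1)=\mu(d)$ from Corollary~\ref{CorollaryMuPhi}, and the fact \eqref{eq-DependsGCD} that $c_n$ depends on its argument only through its greatest common divisor with $n$.

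First I would specialize \eqref{eq-Reciprocity} by taking $d = n$ (a divisor of $n$) and leaving $d'$ an arbitrary positive divisor of $n$. The left-hand side becomes $c_n(n/d')\,\phi(d')$, while the right-hand side becomes $c_{d'}(n/n)\,\phi(n) = c_{d'}(1)\,\phi(n) = \mu(d')\,\phi(n)$, where the last equality is Corollary~\ref{CorollaryMuPhi}. Rearranging yields
\[
	c_n\!\left(\frac{n}{d'}\right) = \frac{\mu(d')\,\phi(n)}{\phi(d')}
\]
for every positive divisor $d'$ of $n$.

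Next, given an arbitrary integer $x$, I would set $e = (n,x)$, which is a positive divisor of $n$, and then take $d' = n/e$, which is likewise a positive divisor of $n$ and satisfies $n/d' = e$. Applying \eqref{eq-DependsGCD} gives $c_n(x) = c_n(e) = c_n(n/d')$, and substituting into the displayed identity above produces exactly
\[
	c_n(x) = \frac{\mu\!\left(\frac{n}{(n,x)}\right)\phi(n)}{\phi\!\left(\frac{n}{(n,x)}\right)},
\]
which is \eqref{eq-vonSterneck}.

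As for the main obstacle: there really is none of substance here, which is precisely the point the paper is making — the structural work has already been done in establishing reciprocity. The only two things that require a moment's care are (i) checking that $n/(n,x)$ is a genuine positive divisor of $n$, so that \eqref{eq-Reciprocity} legitimately applies, and (ii) remembering to invoke \eqref{eq-DependsGCD} first in order to replace the general argument $x$ by the divisor $(n,x)$ before the reciprocity relation is brought to bear.
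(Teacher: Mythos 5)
Your proof is correct and is essentially the paper's own argument: the paper substitutes $d'=n$ and $d=n/(n,x)$ into \eqref{eq-Reciprocity} and invokes \eqref{eq-DependsGCD}, which—since the reciprocity relation is symmetric under swapping $d$ and $d'$—is exactly your specialization with the two roles interchanged. No substantive difference.
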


	\begin{proof}
		Let $d' = n$ and $d = n/(n,x)$ in \eqref{eq-Reciprocity} and use \eqref{eq-DependsGCD}.
	\end{proof}	
	
	Let us make a few historical remarks concerning the preceding formula.
	The peculiar arithmetic function on the right-hand side of \eqref{eq-vonSterneck} is sometimes 
	called \emph{von Sterneck's function}.  It was first studied by R.D.~von Sterneck (1902) \cite{vonSterneck},
	independently of Ramanujan sums, which first rose to prominence with Ramanujan's seminal
	paper (1918) \cite[Paper 21]{Ramanujan}.  It has frequently been claimed that the fact that von Sterneck's function equals
	$c_n(x)$ was first observed by O.~H\"older (1936) \cite{Holder}.  	
	However, Peter van der Kamp was kind enough to inform us that 
	Kluyver  (1906) \cite[p.~410]{Kluyver} had already discovered the equality \eqref{eq-vonSterneck} 
	some thirty years before H\"older's paper appeared.

	Before proceeding, let us also remark that Corollary \ref{CorollaryMuPhi} follows immediately
	from von Sterneck's formula by setting $x=1$ and $x=n$, respectively, in \eqref{eq-vonSterneck}.
	
	\begin{Corollary}
		If $n \geq 1$, then
		\begin{equation*}
			\boxed{
			\sum_{d|n,d'|n}   c_d\left( \frac{n}{d'} \right)c_{d'}\left( \frac{n}{d} \right)  =n \tau(n).
			}
		\end{equation*}
	\end{Corollary}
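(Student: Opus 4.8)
The plan is to recognize the double sum as the trace of the square of the supercharacter table $S=S(n)$. Writing $d_1,d_2,\ldots,d_{\tau(n)}$ for the positive divisors of $n$, recall from \eqref{eq-Sij} that $[S(n)]_{i,j} = c_{d_i}(n/d_j)$. Hence the summand $c_d(n/d')\,c_{d'}(n/d)$, with $d=d_i$ and $d'=d_j$, is exactly $[S(n)]_{i,j}\,[S(n)]_{j,i}$, and the double sum over all divisors $d,d'$ of $n$ becomes
\begin{equation*}
	\sum_{i,j=1}^{\tau(n)} [S(n)]_{i,j}\,[S(n)]_{j,i} = \sum_{i=1}^{\tau(n)} \big[S(n)^2\big]_{i,i} = \tr\big( S(n)^2 \big).
\end{equation*}

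Next I would invoke Theorem \ref{TheoremSU}, which tells us that $S(n)^2 = nI$, where $I$ here is the $\tau(n) \times \tau(n)$ identity matrix. Therefore $\tr(S(n)^2) = \tr(nI) = n\,\tau(n)$, which is precisely the claimed identity. This is genuinely a one-line proof once the key ingredient $S(n)^2 = nI$ is in hand.

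There is essentially no obstacle here: the only subtlety is bookkeeping, namely observing that transposing the roles of $d$ and $d'$ in the product corresponds to transposing the matrix indices, so that the double sum collapses to a trace rather than to, say, $\tr(S(n)S(n)^T)$ (though by the Reciprocity Formula \eqref{eq-Reciprocity} these would differ only by the diagonal conjugation relating $S(n)$ to the symmetric matrix $\sqrt{n}\,U(n)$). One could alternatively phrase the computation directly in terms of $U(n)$: since $\sqrt{n}\,U(n) = R^{-1}S(n)R$ by \eqref{eq-Similar} and $U(n)^2 = I$ by \eqref{eq-Involution}, we again get $\tr(S(n)^2) = n\tr(U(n)^2) = n\tau(n)$; but the route through $S(n)^2 = nI$ is the most economical.
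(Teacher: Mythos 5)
Your proof is correct, and it takes a slightly different route than the paper's. You identify the double sum as $\tr\big(S(n)^2\big)$ and invoke the identity $S(n)^2 = nI$ from Theorem \ref{TheoremSU} (equivalently, the Mixed Orthogonality Relation \eqref{eq-Mixed}), which finishes the argument in one step. The paper instead starts from $\tau(n) = \tr(U^*U)$, expands this via \eqref{eq-Unij} into $\sum_{d|n,\,d'|n} \frac{1}{n}\big(c_d(n/d')\big)^2\,\phi(d')/\phi(d)$, and then needs the Reciprocity Formula \eqref{eq-Reciprocity} to convert the squared term with the $\phi$-ratio into the symmetric product $c_d(n/d')\,c_{d'}(n/d)$. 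Both arguments ultimately rest on $U(n)$ being a selfadjoint unitary involution, but yours bypasses the reciprocity step entirely and is the more economical of the two; the paper's version has the minor virtue of exhibiting the intermediate identity $\sum_{d,d'} \frac{1}{n}\big(c_d(n/d')\big)^2\,\phi(d')/\phi(d) = \tau(n)$ along the way. Your parenthetical remark about $\tr(S S^T)$ versus $\tr(S^2)$ is a sensible precaution, and your alternative phrasing via $\tr(S^2) = n\tr(U^2) = n\tau(n)$ is also valid.
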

	
	\begin{proof}
		Letting $I$ denote the $\tau(n) \times \tau(n)$ identity matrix, it follows from \eqref{eq-Unij} that
		\begin{equation*}
			\tau(n) = \tr I = \tr U^*U =  \sum_{i,j=1}^{\tau(n)} \left(\big[ U(n)\big]_{i,j}\right)^2
			= \sum_{d|n,d'|n} \frac{1}{n} \left(c_d\left( \frac{n}{d'} \right) \right)^2 \frac{ \phi(d')}{\phi(d)}.
		\end{equation*}
		The desired formula now follows from \eqref{eq-Reciprocity}.
	\end{proof}

\subsection{A mixed orthogonality relation}
	An immediate consequence of Theorem \ref{TheoremSU} is the following
	orthogonality relation in which the parameters $d$ and $d'$ play quite different roles.  
	The text \cite[Thm.~2.8]{McCarthyBook} devotes almost two pages to its proof.

	\begin{Theorem}[Mixed Orthogonality Relation]\label{TheoremMixed}
		If $n \geq 1$, $d|n$, and $d'|n$, then
		\begin{equation}\label{eq-Mixed}
			\boxed{
			\frac{1}{n} \sum_{k|n} c_{d}\left( \frac{n}{k} \right) c_{k} \left( \frac{n}{d'} \right)
			= \delta_{d,d'}. }
		\end{equation}
	\end{Theorem}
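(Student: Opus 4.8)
The plan is to recognize \eqref{eq-Mixed} as nothing more than the statement that a certain matrix squares to a scalar multiple of the identity. By Theorem \ref{TheoremSU} we have $S(n)^2 = nI$, where by \eqref{eq-Sij} the $(i,j)$ entry of the $\tau(n)\times\tau(n)$ matrix $S(n)$ is $c_{d_i}(n/d_j)$, the divisors $d_1,\dots,d_{\tau(n)}$ of $n$ being listed in some fixed but arbitrary order. Writing out the matrix product entrywise, the $(i,j)$ entry of $S(n)^2$ is $\sum_{\ell=1}^{\tau(n)} c_{d_i}(n/d_\ell)\, c_{d_\ell}(n/d_j)$, and as $d_\ell$ runs over all divisors of $n$ this is exactly $\sum_{k\mid n} c_{d_i}(n/k)\, c_k(n/d_j)$.

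First I would fix $d\mid n$ and $d'\mid n$, choose the indices $i,j$ so that $d_i = d$ and $d_j = d'$, and read off from $S(n)^2 = nI$ the identity $\sum_{k\mid n} c_d(n/k)\, c_k(n/d') = n\,\delta_{d,d'}$. Dividing both sides by $n$ yields \eqref{eq-Mixed} immediately. (One may just as well run the argument through the unitary rescaling: from $U(n)^2 = I$ in Theorem \ref{TheoremSU} and the formula \eqref{eq-Unij} for the entries of $U(n)$, the factors $\sqrt{\phi(k)}$ coming from row $k$ and column $k$ cancel in the product, leaving $\tfrac1n\sqrt{\phi(d')/\phi(d)}\sum_{k\mid n} c_d(n/k)c_k(n/d') = \delta_{d,d'}$; when $d\neq d'$ both sides vanish, and when $d=d'$ the prefactor is $1$, so again \eqref{eq-Mixed} follows.)

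The only point requiring care — and it is bookkeeping rather than a genuine obstacle — is matching up the roles of $d$ and $d'$ with rows versus columns of $S(n)$: the first factor $c_d(n/k)$ is the $(d,k)$ entry while the second factor $c_k(n/d')$ is the $(k,d')$ entry, so that the summation index $k$ is precisely the summation index of matrix multiplication. Since Theorem \ref{TheoremSU} already supplies $S(n)^2 = nI$, there is no further analytic content; the two-page argument of \cite[Thm.~2.8]{McCarthyBook} is subsumed entirely into the structural fact that $S(n)$ is (up to similarity) a scalar multiple of a real symmetric involution.
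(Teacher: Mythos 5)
Your proposal is correct and is exactly the paper's argument: the paper's entire proof reads ``Compute the $(i,j)$ entry in the equation $S(n)^2 = nI$,'' and you have simply written out that computation in full, with the row/column bookkeeping made explicit. Nothing further is needed.
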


	\begin{proof}
		Compute the $(i,j)$ entry in the equation $S(n)^2 = nI$.
	\end{proof}

	\begin{Corollary}\label{CorollaryColumnSum}
		If $x$ is an integer, then
		\begin{equation}\label{eq-UIK}
			\boxed{
			\sum_{k|n} c_k(x) = 
			\begin{cases}
				n &\text{if $n|x$},\\
				0 & \text{if $n\nmid x$}.
			\end{cases}
			}
		\end{equation}
	\end{Corollary}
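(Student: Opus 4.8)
The plan is to deduce the boxed identity directly from the Mixed Orthogonality Relation \eqref{eq-Mixed}, after first reducing the arbitrary integer $x$ to the divisor $(x,n)$ of $n$.

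\emph{Reduction step.} First I would record the elementary fact that $(x,k) = ((x,n),k)$ whenever $k \mid n$: the number $(x,k)$ divides $k$, hence divides $n$, hence divides $(x,n)$, and since it also divides $k$ it divides $((x,n),k)$; conversely $((x,n),k)$ divides $(x,n)$, which divides $x$, and it divides $k$, so it divides $(x,k)$. Applying \eqref{eq-DependsGCD} with modulus $k$ then gives, for each divisor $k$ of $n$,
\begin{equation*}
  c_k(x) = c_k\big( (x,k) \big) = c_k\big( ((x,n),k) \big) = c_k\big( (x,n) \big),
\end{equation*}
and summing over $k \mid n$ yields $\sum_{k \mid n} c_k(x) = \sum_{k \mid n} c_k\big((x,n)\big)$. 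In other words $x \mapsto \sum_{k \mid n} c_k(x)$ is an even function modulo $n$, so it suffices to evaluate it when $x$ is a divisor of $n$; I may therefore assume $x = n/d'$ with $d' = n/(x,n)$ a divisor of $n$.

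\emph{Main step.} In Theorem \ref{TheoremMixed} I would take $d = 1$. Since $c_1(y) = e(y) = 1$ for every integer $y$ (immediately from \eqref{eq-RamanujanSumDefinition}), the relation \eqref{eq-Mixed} collapses to
\begin{equation*}
  \frac{1}{n} \sum_{k \mid n} c_k\left( \frac{n}{d'} \right) = \delta_{1,d'},
\end{equation*}
that is, $\sum_{k \mid n} c_k(n/d') = n$ when $d' = 1$ and $0$ otherwise. Translating back, the condition $d' = 1$ is exactly $(x,n) = n$, i.e.\ $n \mid x$, which produces the two cases in the statement. (Equivalently, this specialization is just the first row of the matrix identity $S(n)^2 = nI$ from Theorem \ref{TheoremSU}, read off entrywise.)

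The only point needing any care is the gcd identity $(x,k) = ((x,n),k)$ for $k \mid n$ — the observation that the sum is even modulo $n$ — since this is precisely what licenses replacing a general $x$ by a divisor of $n$. Once that is in place the conclusion is a one-line specialization of the Mixed Orthogonality Relation, so I expect no further obstacle.
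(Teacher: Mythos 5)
Your proposal is correct and follows essentially the same route as the paper: set $d=1$ and $d'=n/(x,n)$ in the Mixed Orthogonality Relation \eqref{eq-Mixed} and then invoke \eqref{eq-DependsGCD} to pass from the divisor $(x,n)$ back to a general integer $x$. Your only addition is spelling out the gcd identity $(x,k)=((x,n),k)$ behind the evenness reduction, which the paper leaves implicit.
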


	\begin{proof}
		Set $d = 1$ and $d' = n/(x,n)$ in \eqref{eq-Mixed} to obtain
		$\sum_{k|n} c_k\big( (x,n) \big) = n \delta_{ \frac{n}{(x,n)},1}$.
		Now apply \eqref{eq-DependsGCD} to obtain \eqref{eq-UIK}.
	\end{proof}
	

	\begin{Corollary}
		If $n \geq 1$ and $d|n$, then
		\begin{equation*}
			\boxed{
			\sum_{k|n} c_d\left( \frac{n}{k} \right) \mu(k) = 
			\begin{cases}
				n & \text{if $d = n$},\\
				0 & \text{if $d \neq n$}.
			\end{cases}
			}
		\end{equation*}
	\end{Corollary}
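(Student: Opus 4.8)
The plan is to obtain this identity as the specialization $d' = n$ of the Mixed Orthogonality Relation \eqref{eq-Mixed}. With this choice $n/d' = 1$, so each factor $c_k\!\left(\frac{n}{d'}\right)$ collapses to $c_k(1)$, which by Corollary \ref{CorollaryMuPhi} equals $\mu(k)$; meanwhile the right-hand side $\delta_{d,d'}$ becomes $\delta_{d,n}$. Multiplying through by $n$ then yields exactly
\[
\sum_{k|n} c_d\!\left(\frac{n}{k}\right)\mu(k) = n\,\delta_{d,n},
\]
which is the assertion. So the proof is essentially a single line, with no genuine obstacle beyond recognizing the correct value of $d'$.

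Equivalently, one can read the identity directly off the matrix equation $S(n)^2 = nI$ from Theorem \ref{TheoremSU}. Order the divisors so that $d_{\tau(n)} = n$; then by \eqref{eq-Sij} the last column of $S(n)$ has entries $[S(n)]_{i,\tau(n)} = c_{d_i}(n/n) = c_{d_i}(1) = \mu(d_i)$, again by Corollary \ref{CorollaryMuPhi}. Hence the column vector $(\mu(d_1),\dots,\mu(d_{\tau(n)}))^{T}$ is precisely the last column of $S(n)$, and applying $S(n)$ to it produces $n$ times the $\tau(n)$-th standard basis vector. Reading off the $i$-th coordinate recovers the stated formula, with the nonzero case occurring exactly when $d_i = n$.

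The only point requiring any care is the invocation of $c_k(1) = \mu(k)$: this is Corollary \ref{CorollaryMuPhi}, valid for every positive integer $k$ and in particular for each divisor of $n$, so the substitution is legitimate termwise. No estimate, induction, or case analysis beyond the trivial $d = n$ versus $d \neq n$ dichotomy (already encoded in $\delta_{d,n}$) is needed; the result is a formal corollary of the preceding machinery, and I would present it as such in one or two sentences.
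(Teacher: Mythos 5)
Your first paragraph is exactly the paper's proof: set $d'=n$ in the Mixed Orthogonality Relation \eqref{eq-Mixed} and apply $c_k(1)=\mu(k)$ from Corollary \ref{CorollaryMuPhi}. The matrix reformulation in your second paragraph is just the same identity read off from $S(n)^2=nI$, so there is no substantive difference in approach.
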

	
	\begin{proof}
		Let $d'=n$ in \eqref{eq-Mixed} and use Corollary \ref{CorollaryMuPhi}.
	\end{proof}

	One says that a function $f:\Z\to\C$ is \emph{even modulo $n$} if
	\begin{equation*}
		f\big( (n,x) \big) = f(x)
	\end{equation*}
	holds for all integers $x$ \cite[p.~79]{McCarthyBook}, \cite[p.~15]{SchwarzBook}.
	As we noted in \eqref{eq-DependsGCD}, since $c_n(x)$ is a superclass function with respect to the variable $x$,
	it follows that $c_n(x)$ is even modulo $n$.  We remark that the following theorem 
	\cite[Thm.~2.9]{McCarthyBook} has a trivial proof based upon supercharacter theory.  
	In contrast, the standard proof requires several pages of straightforward but 
	tedious manipulations.

	\begin{Theorem}\label{TheoremEven}
		If $f:\Z\to\C$ is an even function modulo $n$, then $f$ can be written uniquely in the form
		\begin{equation}\label{eq-Expansion}
			f(x) = \sum_{d|n} \alpha(d) c_d(x)
		\end{equation}
		where the coefficients $\alpha(d)$ are given by
		\begin{equation*}	
			\alpha(d) = \frac{1}{n} \sum_{k|n} f\left( \frac{n}{k}\right) c_{k}\left( \frac{n}{d} \right).
		\end{equation*}
	\end{Theorem}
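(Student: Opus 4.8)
The plan is to observe that an even function modulo $n$ is precisely a \emph{superclass function} for the supercharacter theory of Section~\ref{SectionZnZ}, after which the expansion \eqref{eq-Expansion} is just the statement that the supercharacters form a basis for the space of superclass functions. First I would note that the hypothesis $f(x) = f((x,n))$ forces $f$ to be constant on each superclass $K_j = \{a : (a,n) = n/d_j\}$ of \eqref{eq-KD} (with common value $f(n/d_j)$), and conversely that every superclass function is even modulo $n$ because $((x,n),n) = (x,n)$. Thus the even functions modulo $n$ form a $\C$-vector space of dimension $\tau(n)$, one dimension per superclass. The $\tau(n)$ supercharacters $\sigma_i = c_{d_i}$ of \eqref{eq-SIDF} are themselves superclass functions, and they are linearly independent: their array of values on the superclasses is exactly the matrix $S(n)$, which is invertible since $S(n)^2 = nI$ by Theorem~\ref{TheoremSU}. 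Hence $\{c_{d} : d \mid n\}$ is a basis, and $f$ admits a unique expansion $f = \sum_{d\mid n}\alpha(d)\, c_d$; this disposes of both existence and uniqueness.

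To identify the coefficients, I would evaluate this expansion on the superclass $K_j$. For $a \in K_j$ one has $f(a) = f(n/d_j)$ and $c_{d_i}(a) = c_{d_i}(n/d_j) = [S(n)]_{i,j}$, so the expansion becomes the linear system $f(n/d_j) = \sum_i \alpha(d_i)\,[S(n)]_{i,j}$, i.e., the vector $\big(f(n/d_j)\big)_j$ equals $S(n)^T$ applied to the vector $\big(\alpha(d_i)\big)_i$. Since $\big(S(n)^T\big)^2 = \big(S(n)^2\big)^T = nI$, we have $\big(S(n)^T\big)^{-1} = \tfrac1n S(n)^T$, and therefore $\alpha(d_i) = \tfrac1n \sum_j [S(n)]_{j,i}\, f(n/d_j) = \tfrac1n \sum_{k\mid n} f(n/k)\, c_k(n/d_i)$, which is the claimed formula. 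Alternatively, one can extract $\alpha(d_i)$ by pairing $f$ against $\sigma_i$: the row-orthogonality \eqref{eq-RowOrthogonality} gives $\inner{f,\sigma_i} = \alpha(d_i)\norm{X_i}_2^2 = \alpha(d_i)\phi(d_i)$, and expanding $\inner{f,\sigma_i}$ over superclasses and applying the Reciprocity Formula \eqref{eq-Reciprocity} to replace $\frac{\phi(d_\ell)}{\phi(d_i)} c_{d_i}(n/d_\ell)$ by $c_{d_\ell}(n/d_i)$ yields the same expression.

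I do not anticipate a substantive obstacle: the entire content is the identification ``even modulo $n$'' $=$ ``superclass function,'' which lets the abstract orthogonality machinery take over. The only place requiring care is the index bookkeeping in the coefficient computation --- keeping the transpose of $S(n)$ straight (the matrix $S(n)$ is \emph{not} symmetric, only $U(n)$ is), correctly reading off the value of $f$ on the $j$-th superclass, and recognizing that the consistency of the two boxed formulas is nothing other than the Mixed Orthogonality Relation of Theorem~\ref{TheoremMixed}, namely $\tfrac1n\sum_{k\mid n} c_{d_i}(n/k)\, c_k(n/d_j) = \delta_{i,j}$.
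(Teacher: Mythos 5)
Your proposal is correct and follows essentially the same route as the paper: identify ``even modulo $n$'' with ``superclass function,'' use the fact that the supercharacters $c_{d_i}$ form a basis (the paper cites Diaconis--Isaacs for this, where you give the equivalent self-contained argument that $S(n)$ is invertible since $S(n)^2=nI$), and extract the coefficients via the Mixed Orthogonality Relation \eqref{eq-Mixed}, which is exactly the entrywise form of the matrix inversion you carry out.
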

	
	\begin{proof}
		Since the Ramanujan sums $c_{d_i}(x) = \sigma_i(x)$ form a basis for the space of
		all superclass functions by \cite[Thm.~2.2]{DiIs08}, a unique expansion of the form \eqref{eq-Expansion}
		exists.  The formula for the coefficients follows immediately from \eqref{eq-Mixed}.
	\end{proof}
	

	As Hardy observes in the notes to \cite[Paper 21]{HardyRamanujan}, the following important result 
	was first obtained by Kluyver (1906) \cite[p.~410]{Kluyver}.  It also appears in the more recent texts
	\cite[Prop.~2.1]{McCarthyBook}, \cite[Thm.~A.24]{Nathanson}, and \cite[Thm.~3.1b]{SchwarzBook}.
	
	\begin{Theorem}[Kluyver]\label{TheoremKluyver}
		If $n,x \in \Z$ and $n \geq 1$, then
		\begin{equation}\label{eq-Kluyver}
			\boxed{ c_n(x) = \sum_{ d | (n,x) } \mu \left( \frac{n}{d} \right)d.}
		\end{equation}
	\end{Theorem}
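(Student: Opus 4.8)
The plan is to obtain Kluyver's formula in a single step from Corollary \ref{CorollaryColumnSum} by Möbius inversion, with the integer $x$ held fixed throughout.

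First I would fix $x$ and introduce the two arithmetic functions $f(m) = c_m(x)$ and
\[
	g(m) = \sum_{d \mid m} c_d(x),
\]
so that $g$ is, by construction, the divisor sum of $f$. Corollary \ref{CorollaryColumnSum}, which holds for every modulus, evaluates $g$ explicitly: $g(m) = m$ when $m \mid x$ and $g(m) = 0$ otherwise. Möbius inversion then recovers $f$ from $g$, giving
\[
	c_n(x) = \sum_{d \mid n} \mu\!\left(\frac{n}{d}\right) g(d) = \sum_{\substack{d \mid n \\ d \mid x}} \mu\!\left(\frac{n}{d}\right) d,
\]
where in the last equality the only surviving terms are those with $d \mid x$. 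Since the joint condition ``$d \mid n$ and $d \mid x$'' is the same as $d \mid (n,x)$, this sum is exactly the right-hand side of \eqref{eq-Kluyver}.

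I do not anticipate any real difficulty here; the one point needing care is that Möbius inversion is applied with $x$ a fixed parameter, which is legitimate precisely because Corollary \ref{CorollaryColumnSum} holds for all moduli simultaneously. If one instead prefers an argument in the multiplicative spirit of this section, an alternative is to observe that both sides of \eqref{eq-Kluyver}, regarded as functions of $n$ for fixed $x$, are multiplicative --- the left side by Theorem \ref{TheoremMultiplicative}, and the right side because $(mn,x) = (m,x)(n,x)$ when $(m,n) = 1$ and $\mu$ is multiplicative across coprime factors --- and then to verify the identity on a prime power $n = p^{\alpha}$ against the explicit values \eqref{eq-RamanujanPrimePowers}; there only the divisors $d = p^{\alpha}$ and $d = p^{\alpha-1}$ of $(p^{\alpha},x)$ carry a nonzero Möbius factor, so a short three-case check completes it. I would feature the Möbius-inversion proof, since it is a genuine one-liner and invokes only results already established.
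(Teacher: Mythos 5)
Your featured argument is exactly the paper's proof: the authors also apply M\"obius inversion to the divisor-sum identity \eqref{eq-UIK} of Corollary \ref{CorollaryColumnSum} and then collapse the condition ``$d\mid n$ and $d\mid x$'' to $d\mid(n,x)$. The proposal is correct and coincides with the paper's approach.
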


	\begin{proof}
		Applying the M\"obius inversion formula to \eqref{eq-UIK} it follows that
		\begin{equation*}
			c_n(x)
			= \sum_{d|n} \mu\left( \frac{n}{d} \right) \sum_{k|d} c_k(x) 
			= \sum_{d|n, d|x} \mu\left( \frac{n}{d} \right) d 
			=  \sum_{ d | (n,x) } \mu \left( \frac{n}{d} \right)d. \qedhere
		\end{equation*}
	\end{proof}
	
	\begin{Corollary}
		For all $m,n \geq 1$, the inequality 
		\begin{equation*}
			\boxed{ |c_n(m)| \leq \sigma(m)}
		\end{equation*}
		holds.  Here $\sigma(m)$ denotes the sum of the divisors of $m$.
	\end{Corollary}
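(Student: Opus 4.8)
The plan is to invoke Kluyver's formula, Theorem \ref{TheoremKluyver}, which expresses $c_n(m)$ as a signed sum over the common divisors of $n$ and $m$. Writing
$$
c_n(m) = \sum_{d \mid (n,m)} \mu\!\left( \frac{n}{d} \right) d,
$$
the triangle inequality immediately gives $|c_n(m)| \leq \sum_{d \mid (n,m)} \big| \mu(n/d) \big|\, d$. Since $|\mu(k)| \leq 1$ for every positive integer $k$ by the definition \eqref{eq-MobiusDefinition}, this sum is bounded above by $\sum_{d \mid (n,m)} d$.

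The final step is to observe that every divisor $d$ of $(n,m)$ is in particular a divisor of $m$, and that all the summands are positive, so that $\sum_{d \mid (n,m)} d \leq \sum_{d \mid m} d = \sigma(m)$. Stringing these three inequalities together yields the claimed bound $|c_n(m)| \leq \sigma(m)$.

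There is essentially no obstacle here once Kluyver's formula is in hand; the only point worth a moment's care is the monotonicity step, where one passes from a sum over divisors of $(n,m)$ to a sum over divisors of $m$, which is legitimate precisely because $(n,m) \mid m$ and the terms are nonnegative. (One could alternatively argue directly from the definition \eqref{eq-RamanujanSumDefinition} that $|c_n(m)| \leq \phi(n)$, but this does not give the sharper $\sigma(m)$ bound, and the Kluyver route is both cleaner and stronger, so I would present only that.)
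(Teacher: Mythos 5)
Your argument is correct and is exactly the one the paper intends: the corollary is stated immediately after Kluyver's formula \eqref{eq-Kluyver} precisely so that it follows from the triangle inequality, the bound $|\mu|\leq 1$, and the observation that every divisor of $(n,m)$ divides $m$. The paper leaves this implicit (no proof is printed), and your write-up supplies the same one-line reasoning.
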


\section{Superclass arithmetic}\label{SectionSuperclass}

	In this section, we explore several further properties of Ramanujan sums which 
	can be deduced by studying \emph{superclass arithmetic} on $\Z/n\Z$.  
	Although this approach has been attempted sporadically throughout the years 
	\cite{Haukkanen,Kesava, Kesava2,Maze,Ramanathan2}, 
	these earlier authors did not have the benefit of the general theory of supercharacters.

	We state the following preparatory lemmas in full generality, noting that they apply to any finite group 
	$G$ (we require only the case $G = \Z/n\Z$).  In fact, a more primitive approach was recently undertaken in \cite{CKS}
	where Kloosterman sums were considered in the context of classical character theory.

\subsection{Superclass constants and simultaneous diagonalization}
	Suppose that $G$ is a finite group with supercharacter theory $(\X_H,\K_H)$ generated
	by the action of some subgroup $H$ of $\Aut(G)$ (see Subsection \ref{SubsectionAutH}).
	In particular, we obtain from the action of $H$ on $G$ a partition
	$\X = \{X_1,X_2,\ldots,X_r\}$ of $\Irr(G)$ with corresponding supercharacters 
	\eqref{eq-SCG} and a compatible partition $\K = \{K_1,K_2,\ldots,K_r\}$ of $G$ into superclasses.
	We also note that the superclass sums
	\begin{equation*}
		\hat{K}_i = \sum_{g \in K_i} g
	\end{equation*}
	belong to the center ${\bf Z}(\C[G])$ of the group algebra $\C[G]$.  Indeed, each
	$K_i$ is a union of conjugacy classes of $G$ and it is well-known that the
	corresponding class sums each belong to ${\bf Z}(\C[G])$ \cite[Thm.~2.4]{Isaacs}.
	
	Although the following lemma is a special case of \cite[Cor.~2.3]{DiIs08},
	we provide a brief explanation for the sake of completeness.  Since we are, for the moment, working
	under the assumption that $G$ is an arbitrary finite group, we write the group operation
	\emph{multiplicatively}.  When we later apply the following two results to $\Z/n\Z$, the group operation
	will be \emph{addition} modulo $n$.
	
	\begin{Lemma}\label{LemmaPreliminary}
		Fix some element $z$ in $K_k$ and let $a_{i,j,k}$ denote the number of solutions 
		$(x_i,y_j) \in K_i \times K_j$ to the equation $xy = z$.  The \emph{superclass constants} $a_{i,j,k}$ satisfy
		\begin{equation}\label{eq-cijk}
			\hat{K}_i \hat{K}_j = \sum_{k=1}^r a_{i,j,k} \hat{K}_k
		\end{equation}
		for $1 \leq i,j,k \leq r$.  
	\end{Lemma}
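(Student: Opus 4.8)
The plan is to expand the product $\hat{K}_i \hat{K}_j$ directly using the definition of the superclass sums and to collect terms according to which superclass the resulting group element lands in. First I would write
\[
	\hat{K}_i \hat{K}_j = \Big( \sum_{x \in K_i} x \Big)\Big( \sum_{y \in K_j} y \Big) = \sum_{x \in K_i} \sum_{y \in K_j} xy,
\]
so that the coefficient of a given group element $g$ in $\hat{K}_i \hat{K}_j$ is precisely the number of pairs $(x,y) \in K_i \times K_j$ with $xy = g$. The key step is then to observe that this coefficient depends only on the superclass containing $g$, not on $g$ itself: if $g$ and $g'$ lie in the same superclass $K_k$, then (since $K_k$ is a union of $H$-orbits, $H \leq \Aut(G)$) there is an automorphism $h \in H$ with $h(g) = g'$, and $x \mapsto h(x)$, $y \mapsto h(y)$ is a bijection from the solution set $\{(x,y) \in K_i \times K_j : xy = g\}$ onto $\{(x,y) \in K_i \times K_j : xy = g'\}$, using that each $K_i$ and $K_j$ is $H$-invariant and that $h$ is a homomorphism. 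Hence the count $a_{i,j,k}$ is well-defined independently of the choice of representative $z \in K_k$, and grouping the double sum by superclass yields exactly \eqref{eq-cijk}.

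Alternatively, and perhaps more cleanly, I would simply cite that the superclass sums $\hat{K}_1, \ldots, \hat{K}_r$ span a subalgebra of the center ${\bf Z}(\C[G])$ — this is \cite[Cor.~2.3]{DiIs08}, and closure under multiplication is exactly the assertion that $\hat{K}_i \hat{K}_j$ is a $\C$-linear combination of the $\hat{K}_k$. It then remains only to identify the structure constants. Writing $\hat{K}_i \hat{K}_j = \sum_k b_k \hat{K}_k$ for some scalars $b_k$ and comparing the coefficient of a fixed $z \in K_k$ on both sides (noting $z$ appears in $\hat{K}_k$ with coefficient $1$ and in no other $\hat{K}_\ell$) shows $b_k$ equals the number of factorizations $z = xy$ with $(x,y) \in K_i \times K_j$, which is $a_{i,j,k}$. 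This gives \eqref{eq-cijk}.

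I do not expect any genuine obstacle here; the statement is essentially bookkeeping. The one point that deserves a word of care is the well-definedness of $a_{i,j,k}$ — i.e.\ that the solution count is the same for every $z \in K_k$ — and this is precisely where the hypothesis that the superclass theory comes from an automorphism action (Subsection \ref{SubsectionAutH}) is used, via the bijection induced by $h \in H$ described above. In the specific case $G = \Z/n\Z$ that we ultimately need, the group is abelian, $H \cong (\Z/n\Z)^\times$, each superclass is $K_j = \{a : (a,n) = n/d_j\}$, and the equation $xy = z$ becomes $x + y \equiv z \pmod n$; the argument specializes with no changes. I would keep the exposition to the short paragraph style of the surrounding lemmas, since this result is purely preparatory for the superclass arithmetic that follows.
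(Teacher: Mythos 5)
Your first argument is essentially the paper's own proof: reduce the identity to the well-definedness of $a_{i,j,k}$ over the choice of representative $z\in K_k$, and produce a bijection between the two solution sets using the automorphism action. One small imprecision for the general (nonabelian) case: since a superclass is a union of $H$-orbits of \emph{conjugacy classes}, two elements $z_1,z_2$ of the same $K_k$ need not be related by an element of $H$ alone; one only gets $\varphi(z_1)=\varphi(g)^{-1}z_2\varphi(g)$ for some $\varphi\in H$ and $g\in G$, so your bijection must be $(x,y)\mapsto\big(\varphi(gxg^{-1}),\varphi(gyg^{-1})\big)$ rather than $(x,y)\mapsto(h(x),h(y))$ --- harmless, since each $K_i$ is a union of conjugacy classes, and vacuous in the abelian case $G=\Z/n\Z$ that the paper ultimately uses.
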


	\begin{proof}
		It suffices to prove that $a_{i,j,k}$ does not depend upon the particular representative $z$
		of $K_k$ which is chosen.	  Suppose that $z_1$ and $z_2$ belong to $K_k$.  By definition of the supercharacter
		theory $(\X_{H}, \K_{H})$, there exists an automorphism $\varphi:G\to G$ belonging to $H$
		and an element $g$ of $G$ such that
		\begin{equation*}
			\varphi(z_1) = \varphi(g)^{-1}z_2\varphi(g).
		\end{equation*}
		If $(x_1,y_1) \in K_i \times K_j$ is a solution to the equation $xy =z_1$, then
		\begin{equation*}
			\varphi(x_1)\varphi(y_1) =\varphi(z_1)= \varphi(g)^{-1} z_2 \varphi(g),
		\end{equation*}
		from which it follows that the elements $x_2 = \varphi(gx_1g^{-1})$ of $K_i$
		and $y_2 = \varphi(gy_1g^{-1})$ of $K_j$ satisfy $x_2 y_2 = z_2$.  Thus there is a
		bijection between solutions $(x_1,y_1) \in K_i \times K_j$ of $xy=z_1$ and
		solutions $(x_2,y_2) \in K_i \times K_j$ of $xy = z_2$.
	\end{proof}

	The following theorem is partly inspired by the corresponding result from classical character theory 
	\cite[Section 33]{CR62}, \cite[Lem.~3.1]{CKS}, \cite[Lem.~4]{Ku75}.
	Since we require a supercharacter version of this result, 
	we provide a detailed proof.  As with the preceding lemma, we work with an arbitrary finite group $G$,
	maintaining the notation and conventions established at the beginning of this subsection.

	\begin{Theorem}\label{TheoremPowerful}
		Let $M_i = (a_{i,j,k})_{j,k=1}^r$.
		If $W = (w_{j,k})_{j,k=1}^r$ denotes the $r \times r$ matrix with entries
		\begin{equation}\label{eq-OmegaDefinition}
			w_{j,k} = \frac{|K_j|\sigma_k(K_j)}{ \sum_{\chi \in X_k} \chi(1)}, 
		\end{equation}
		and $D_i = \operatorname{diag}(w_{i,1}, w_{i,2}, \ldots, w_{i,r})$, then $W$ is invertible and
		\begin{equation}\label{eq-Similarity}
			M_i W = W D_i  
		\end{equation}
		for $i=1,2,\ldots,r$.  In particular, the matrices $M_1,M_2,\ldots,M_r$ are simultaneously diagonalizable
		and commute with each other.
	\end{Theorem}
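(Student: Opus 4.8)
The plan is to establish the intertwining relation \eqref{eq-Similarity} by a direct computation using the action of the superclass sums on the group algebra, and then deduce simultaneous diagonalizability. First I would record the key fact from the analysis of $\C[G]$: the center ${\bf Z}(\C[G])$ has a basis of orthogonal idempotents $e_\chi$ indexed by $\Irr(G)$, on each of which a class sum acts by a scalar; grouping these idempotents according to the partition $\X$ gives idempotents $f_k = \sum_{\chi \in X_k} e_\chi$ for $k = 1, \ldots, r$. The crucial point is that each superclass sum $\hat K_i$ lies in ${\bf Z}(\C[G])$ (noted just before the lemma), and — by the defining property (3) of a supercharacter theory — $\hat K_i f_k$ is a scalar multiple of $f_k$. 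A standard character-theoretic computation identifies that scalar: writing $\omega_\chi(\hat K_i) = |K_i| \chi(g_i)/\chi(1)$ for the central character and using that $\sigma_k = \sum_{\chi \in X_k}\chi(1)\chi$ is constant on superclasses, one gets $\hat K_i f_k = w_{i,k} f_k$ with $w_{i,k}$ exactly as in \eqref{eq-OmegaDefinition}.

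Next I would translate \eqref{eq-cijk} into matrix form on the span of $\{f_1, \ldots, f_r\}$. Multiplying \eqref{eq-cijk} on the right by $f_k$ and using $\hat K_\ell f_k = w_{\ell,k} f_k$ gives $w_{i,k} w_{j,k} = \sum_{\ell=1}^r a_{i,j,\ell} w_{\ell,k}$; reading this as the $(j,k)$ entry of a matrix identity — with $M_i = (a_{i,j,\ell})_{j,\ell}$ acting on the vector $(w_{\ell,k})_\ell$ — yields precisely $M_i (W e_k) = w_{i,k} (W e_k)$, i.e.\ $M_i W = W D_i$ column by column, where $W = (w_{j,k})$ and $D_i = \diag(w_{i,1}, \ldots, w_{i,r})$. (Alternatively, one can avoid the idempotents and argue directly: apply the supercharacter $\sigma_k$ to both sides of \eqref{eq-cijk}, using that $\hat K_i$ acts on the $\sigma_k$-isotypic data by the scalar $\omega$, to obtain the same scalar identity.) The invertibility of $W$ is the one remaining ingredient: $W = DR^2 S^T$ up to the nonzero diagonal rescalings coming from \eqref{eq-RightMultiplier} and the $\norm{X_k}_2$, and $S$ is invertible because the associated matrix $U = LSR$ of \eqref{eq-LSR} is unitary, hence $\det S \neq 0$; so $W$ is invertible. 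Then $W^{-1} M_i W = D_i$ for every $i$ shows the $M_i$ are simultaneously diagonalized by the single matrix $W$, and commuting matrices that are simultaneously diagonalizable of course commute with one another — in fact $M_i M_j = W D_i D_j W^{-1} = W D_j D_i W^{-1} = M_j M_i$ since diagonal matrices commute.

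The main obstacle I anticipate is bookkeeping rather than conceptual: one must be careful that the scalar by which $\hat K_i$ acts on the block $f_k$ is $w_{i,k}$ with the normalization $\sum_{\chi \in X_k}\chi(1)$ in the denominator — this matches $\sigma_k(1) = \sum_{\chi \in X_k}\chi(1)^2$ divided appropriately against $\sigma_k(K_i) = \sum_{\chi \in X_k}\chi(1)\chi(g_i)$, and getting the factor of $|K_i|$ and the degree weights in the right places requires writing $\sigma_k(\hat K_i)$ two ways. I would double-check this by testing against the trivial superclass $K_1 = \{1\}$ (where $w_{1,k} = 1$ for all $k$, forcing $M_1 = I$, consistent with $\hat K_1 = 1$) and against the case where the supercharacter theory is the full classical one, in which \eqref{eq-Similarity} must reduce to the classical statement that the class-algebra structure constants are simultaneously diagonalized by the character table. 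Once the scalar is pinned down correctly, the rest is formal linear algebra.
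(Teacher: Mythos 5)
Your argument is essentially the paper's argument in different clothing: both rest on the fact that the central superclass sums $\hat{K}_i$ act as scalars on each irreducible block of $\C[G]$ (Schur's lemma), that this scalar is constant across an automorphism orbit $X_k$, and that pushing \eqref{eq-cijk} through this scalar action produces the entrywise relation $\sum_{\ell} a_{i,j,\ell}\,w_{\ell,k} = w_{i,k}w_{j,k}$, which is \eqref{eq-Similarity} column by column. Working with the central idempotents $f_k=\sum_{\chi\in X_k}e_\chi$ rather than with the representations $\pi_k=\bigoplus_{\chi\in X_k}\chi(1)\pi_\chi$ used in the paper is a cosmetic difference. You do supply one thing the paper's proof omits entirely: a justification that $W$ is invertible. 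Your argument for it is correct --- $W$ equals $S^{T}$ up to pre- and post-multiplication by invertible diagonal matrices built from \eqref{eq-RightMultiplier} and the $\norm{X_k}_2$, and $\det S\neq 0$ because $U=LSR$ in \eqref{eq-LSR} is unitary --- and is worth keeping, since the theorem asserts invertibility.

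The one substantive caution concerns exactly the bookkeeping point you flagged. The scalar by which $\hat{K}_i$ acts on $e_\chi$ is the central character value $\chi(\hat{K}_i)/\chi(1)$, and since $\chi(\hat{K}_i)=w_{i,k}$ for every $\chi\in X_k$ (this is what \eqref{eq-OmegaDefinition} encodes), the scalar on the block $f_k$ is $w_{i,k}/\chi(1)$, \emph{not} $w_{i,k}$, whenever the characters in $X_k$ are nonlinear. Carried through, the relation becomes $M_iW=W\,\diag\!\big(w_{i,1}/n_1,\ldots,w_{i,r}/n_r\big)$ with $n_k$ the common degree on $X_k$. Your own proposed sanity check detects this: for $K_1=\{1\}$ one has $w_{1,k}=\sum_{\chi\in X_k}\chi(1)^2\big/\sum_{\chi\in X_k}\chi(1)$, which equals $1$ (as $M_1=I$ and the invertibility of $W$ force) only when every $\chi\in X_k$ is linear. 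None of this causes trouble here --- the theorem is applied only to $G=\Z/n\Z$, where all degrees are $1$, and the paper's own proof makes the identical identification when it writes $\pi_k(\hat{K}_j)$ as $\bigoplus_{\chi\in X_k}w_{j,k}I_{\chi(1)}$ --- but if you intend the statement for a general finite group you should either insert the degree factors into $D_i$ or note the restriction.
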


	\begin{proof}
		 Applying $\sigma_k$ to the superclass sum $\hat{K}_j$ we first note that
		\begin{equation}\label{eq-JFC05}
			\sigma_k(\hat{K}_j)  = |K_j| \sigma_k(K_j)
		\end{equation}
		since $\sigma_k$ is a superclass function which assumes the constant value $\sigma_k(K_j)$ on
		the superclass $K_j$.  Next let $\pi_k$ be the matrix representation of $G$ given by
		\begin{equation}\label{eq-JFC03}
			\pi_k = \bigoplus_{\chi \in X_k} \chi(1) \pi_{\chi}
		\end{equation}
		where $\pi_{\chi}$ is an irreducible matrix representation whose character $\chi$ belongs to $X_k$.
		Since each $\hat{K}_j$ belongs to ${\bf Z}(\C[G])$ 
		and each $\pi_{\chi}$ is irreducible, there exist constants $w_{j,k}^{\chi}$ such that
		\begin{equation}\label{eq-JFC04}
			\pi_{\chi}(\hat{K}_j) = \frac{w_{j,k}^{\chi}}{\chi(1)} I_{\chi(1)},
		\end{equation}
		where $I_{\chi(1)}$ denotes the $\chi(1) \times\chi(1)$ identity matrix.
		Taking the trace of both sides of  the preceding equation we find that
		\begin{equation}\label{eq-JFC02}
			\chi(\hat{K}_j) = w_{j,k}^{\chi}.
		\end{equation}
	
		We now claim that $w_{j,k}^{\chi}$ is independent of which particular irreducible character
		$\chi$ in $X_k$ is chosen.  Indeed, if $\chi$ and $\chi'$
		belong to $X_k$, then there exists an automorphism $\varphi:G\to G$ belonging to
		$H$ such that $\chi = \chi' \circ \varphi$.  Therefore
		\begin{equation*}
			\chi(\hat{K}_j)= \sum_{g \in K_j} \chi(g) = \sum_{g\in K_j} \chi'(\varphi(g)) = \sum_{h \in \varphi(K_j)} \chi'(h) 
			=\sum_{h \in K_j} \chi'(h) = \chi'(\hat{K}_j)
		\end{equation*}
		since $\varphi$ permutes the conjugacy classes which constitute the superclass 
		$K_j$.  In the following, we now write $w_{j,k}$ in place of $w_{j,k}^{\chi}$.

		Substituting \eqref{eq-JFC04} into \eqref{eq-JFC03} we find that
		\begin{equation}\label{eq-JFC01}
			\pi_k(\hat{K}_j) =\bigoplus_{\chi \in X_k}  w_{j,k}  I_{\chi(1)}.
		\end{equation}
		Taking the trace of the preceding yields
		\begin{equation*}
			\tr \pi_k(\hat{K}_j) = \sum_{\chi \in X_k} \chi(1) w_{j,k} = \sum_{\chi \in X_k} \chi(1)\chi(\hat{K}_j)
			=\sigma_k(\hat{K}_j)
		\end{equation*}
		by the definition \eqref{eq-SCG} of the supercharacter $\sigma_k$.
		Returning to \eqref{eq-JFC05} and using the preceding we find that
		\begin{equation*}
			|K_j| \sigma_k(K_j) = \sigma_k(\hat{K}_j) = w_{j,k} \sum_{\chi \in X_k} \chi(1), 
		\end{equation*}
		from which we obtain the desired formula \eqref{eq-OmegaDefinition} for $w_{j,k}$.
		
		We now need to verify that the simultaneous diagonalization \eqref{eq-Similarity} holds.
		Applying $\pi_{\ell}$ to \eqref{eq-cijk} and using \eqref{eq-JFC01} we see that
		\begin{equation*}
			 \left(\bigoplus_{\chi \in X_{\ell}} w_{i,\ell}I_{\chi(1)}\right) \left(  \bigoplus_{\chi \in X_{\ell}} w_{j,\ell}I_{\chi(1)}\right)
			 = \sum_{k=1}^r a_{i,j,k} \left( \bigoplus_{\chi \in X_{\ell}} w_{k,\ell}I_{\chi(1)} \right).
		\end{equation*}
		Considering the direct summand corresponding to an arbitrary $\chi$ in $X_{\ell}$ we see that
		\begin{equation*} 
			w_{i,\ell}  I_{\chi(1)} w_{j,\ell} I_{\chi(1)} =\sum_{k=1}^r a_{i,j,k} w_{k,\ell}I_{\chi(1)},
		\end{equation*}
		which in turn implies that
		\begin{equation*}
			\sum_{k=1}^r a_{i,j,k} w_{k,\ell} =  w_{j,\ell} w_{i,\ell}.
		\end{equation*}
		To conclude the proof, we observe that the preceding equation is simply 
		the $(j,\ell)$ entry of the matrix equation \eqref{eq-Similarity}.
	\end{proof}

\subsection{The matrices $M_i(p^{\alpha})$}\label{SubsectionMPA}
	We are now ready to discuss the matrices $M_i = M_i(n)$, as defined in Theorem \ref{TheoremPowerful},
	which arise from the supercharacter theory for $G=\Z/n\Z$ induced by $H = \Aut(G)$
	(described in Subsection \ref{SubsectionAutH}).  We do this first for prime powers $n = p^{\alpha}$.
	
	Let us first note that the divisors of $p^{\alpha}$ are the $\alpha+1$ numbers
	$d_i = p^{i-1}$ for $i = 1,2,\ldots,\alpha+1$.  In light of \eqref{eq-KD}, this yields the corresponding superclasses
	\begin{equation}\label{eq-KiDFN}
		K_i = \{ a \in \Z/p^{\alpha}\Z : (a,p^{\alpha}) = p^{{\alpha}-i+1} \} = \{ xp^{\alpha-i+1} \in \Z/p^{\alpha}\Z : p \nmid x\},
	\end{equation}
	each of which satisfies 
	\begin{equation*}
		|K_i| = \phi(p^{i-1})
	\end{equation*}
	by \eqref{eq-KDI}.
	Fixing some $z$ in $K_k$, we let $a_{i,j,k}$ denote the number of solutions
	$(x,y)$ in $K_i \times K_j$ to the equation 
	\begin{equation}\label{eq-xyz}
		x+y = z.
	\end{equation}
	Recall that since Lemma \ref{LemmaPreliminary} concerns general groups, the corresponding equation $xy=z$
	was written in multiplicative notation.  However, since we are considering only abelian groups,
	we choose now to employ additive notation.

	We claim that the matrix $M_i(p^{\alpha}) = [a_{i,j,k}]_{j,k=1}^{\alpha+1}$ is given by
	\begin{equation}\label{eq-MPN}
		\footnotesize
		\left[
		\begin{array}{cccc|c|cccc}
			\0 & \0 & \gcdots & \0 & 1 & \0 & \0 & \gcdots & \0\\
			\0 & \0 & \gcdots & \0 & \phi(p) & \0 & \0 & \gcdots & \0 \\
			\gvdots & \gvdots & \gddots & \gvdots & \vdots & \gvdots & \gvdots & \gddots & \gvdots \\
			\0 & \0 & \gcdots & \0 & \phi(p^{i-2}) & \0 & \0 & \gcdots & \0 \\
			\hline
			\!\!\!\phi(p^{i-1}) & \!\!\!\phi(p^{i-1}) & \cdots & \!\!\!\phi(p^{i-1}) & p^{i-1} - 2p^{i-2} & \0 & \0 & \gcdots & \0 \\
			\hline
			\0 & \0 & \gcdots & \0 & \0 & \phi(p^{i-1}) & \0 & \gcdots & \0 \\
			\0 & \0 & \gcdots & \0 & \0 &\0 & \!\!\! \phi(p^{i-1}) & \gcdots & \0 \\
			\gvdots & \gvdots & \gddots & \gvdots & \gvdots & \gvdots & \gvdots & \ddots & \gvdots \\
			\0 & \0 & \gcdots & \0 & \0 & \0 & \0 & \gcdots &\!\!\! \phi(p^{i-1})\\
		\end{array}
		\right],
	\end{equation}
	where the $i$th row and column of $M_i$ are singled out.  Although the computations involved are elementary,
	we feel compelled to provide a complete justification of \eqref{eq-MPN} since so many of our upcoming results
	depend upon this formula.  The reader is invited to consult Appendix \ref{SectionPrimePowerAlpha}
	for the details.

	The matrix $W = W(p^{\alpha})$ described by Theorem \ref{TheoremPowerful} is somewhat easier to describe.
	In fact, we claim that 
	\begin{equation}\label{eq-WPASPA}
		W(p^{\alpha}) = S(p^{\alpha}),
	\end{equation}
	the supercharacter table for $\Z/p^{\alpha}\Z$ discussed 
	in Subsection \ref{SubsectionPrimePowers}.  To see this, simply note that
	\begin{align*}\qquad\qquad\qquad
		[W(p^{\alpha})]_{j,k}
		&= \frac{ |K_j| \sigma_k(K_j)}{\sum_{\chi \in X_k} \chi(1)} && \text{by \eqref{eq-OmegaDefinition}}\\
		&= \frac{ \phi(d_j) c_{d_k}( \frac{n}{d_j} ) }{\phi(d_k)} && \text{by  \eqref{eq-XDI}, \eqref{eq-SIDF}, \eqref{eq-KDI}}\\
		&= c_{d_j}\Big(\frac{n}{d_k}\Big). && \text{by \eqref{eq-Reciprocity}}
	\end{align*}
	
	Finally, there are the diagonal matrices $D_i = D_i(p^{\alpha})$.  By Theorem \ref{TheoremPowerful} we have
	\begin{equation}\label{eq-PrimePowerD}
		[D_i(p^{\alpha})]_{j,k} 
		= \delta_{j,k} c_{d_i}\left( \frac{p^{\alpha}}{d_k}\right)
		= \delta_{j,k} c_{p^{i-1}}(p^{\alpha-k+1}).
	\end{equation}

	\begin{Example}\label{ExamplePPM1}
		The case $p = 3$ and $\alpha = 4$ yields the divisors $d_1=1$, $d_2 =3$,
		$d_3 = 9$, $d_4 = 27$, and $d_5 = 81$ of $p^{\alpha} = 81$.  The corresponding matrices
		$M_1,M_2,M_3,M_4,M_5$ 
		are displayed below.
		\begin{equation*}\small
			\underbrace{
			\left[
			\begin{array}{c|cccc}
				 1 & \0 & \0 & \0 & \0 \\
				 \hline
				 \0 & 1 & \0 & \0 & \0 \\
				 \0 & \0 & 1 & \0 & \0 \\
				 \0 & \0 & \0 & 1 & \0 \\
				 \0 & \0 & \0 & \0 & 1
			\end{array}
			\right]}_{M_1}\qquad
			\underbrace{ 
			\left[
			\begin{array}{c|c|ccc}
				 \0 & 1 & \0 & \0 & \0 \\
				 \hline
				 2 & 1 & \0 & \0 & \0 \\
				 \hline
				 \0 & \0 & 2 & \0 & \0 \\
				 \0 & \0 & \0 & 2 & \0 \\
				 \0 & \0 & \0 & \0 & 2
			\end{array}
			\right]}_{M_2}\qquad
			\underbrace{
			\left[
			\begin{array}{cc|c|cc}
				 \0 & \0 & 1 & \0 & \0 \\
				 \0 & \0 & 2 & \0 & \0 \\
				 \hline
				 6 & 6 & 3 & \0 & \0 \\
				 \hline
				 \0 & \0 & \0 & 6 & \0 \\
				 \0 & \0 & \0 & \0 & 6
			\end{array}
			\right]
			}_{M_3}
		\end{equation*}
		\begin{equation*}\small
			\underbrace{
			\left[
			\begin{array}{ccc|c|c}
				 \0 & \0 & \0 & 1 & \0 \\
				 \0 & \0 & \0 & 2 & \0 \\
				 \0 & \0 & \0 & 6 & \0 \\
				 \hline
				 18 & 18 & 18 & 9 & \0 \\
				 \hline
				 \0 & \0 & \0 & \0 & 18
			\end{array}
			\right]}_{M_4}\qquad
			\underbrace{
			\left[
			\begin{array}{cccc|c}
				 \0 & \0 & \0 & \0 & 1 \\
				 \0 & \0 & \0 & \0 & 2 \\
				 \0 & \0 & \0 & \0 & 6 \\
				 \0 & \0 & \0 & \0 & 18 \\
				 \hline
				 54 & 54 & 54 & 54 & 27
			\end{array}
			\right]}_{M_5}.
		\end{equation*}
		These matrices each satisfy $M_i W = W D_i$ where
		\begin{equation*}
			W =\small
			\begin{bmatrix}
				 1 & 1 & 1 & 1 & 1 \\
				 2 & 2 & 2 & 2 & -1 \\
				 6 & 6 & 6 & -3 & 0 \\
				 18 & 18 & -9 & 0 & 0 \\
				 54 & -27 & 0 & 0 & 0
			\end{bmatrix}
		\end{equation*}
		is the supercharacter table $S(81)$ and
		\begin{align*}
			D_1 &= \diag(1,1,1,1,1) , \\
			D_2 &= \diag(2,2,2,2,-1), \\
			D_3 &= \diag(6,6,6,-3,0),\\
			D_4 &= \diag(18,18,-9,0,0),\\
			D_5 &= \diag(54,27,0,0,0).
		\end{align*}
	\end{Example}
	
	Before proceeding, let us make a few remarks about the matrices \eqref{eq-MPN}.  First observe that
	$M_i(p^{\alpha})$ is a multiple of a stochastic matrix since each column of $M_i(p^{\alpha})$ sums to $\phi(p^{i-1})$.  
	Indeed, we only need verify this for the $i$th column of $M_i(p^{\alpha})$:
	\begin{align*}
		&1 + \phi(p) + \phi(p^2) + \cdots + \phi(p^{i-2}) + (p^{i-1} - 2p^{i-2})\\
		&\qquad= 1 + (p-1) + (p^2-p)+\cdots + (p^{i-2}-p^{i-3}) + (p^{i-1} - 2p^{i-2})\\
		&\qquad= p^{i-1} - p^{i-2} \\
		&\qquad= \phi(p^{i-1}).
	\end{align*}
	Let us also note that
	\begin{equation}\label{eq-StochasticSum}
		\sum_{i=1}^{\alpha+1} M_i(p^{\alpha}) =\small
		\begin{bmatrix}
			1 & 1 & \cdots & 1 \\
			\phi(p) & \phi(p) & \cdots & \phi(p) \\
			\vdots & \vdots & \ddots & \vdots \\
			\phi(p^{\alpha}) & \phi(p^{\alpha}) & \cdots & \phi(p^{\alpha}) \\
		\end{bmatrix}.
	\end{equation}
	This can be deduced from \eqref{eq-MPN} and the fact that for $j \neq k$, the corresponding off-diagonal matrix entry
	$[M_i(p^{\alpha})]_{j,k}$ is nonzero for only a single value of $i$.  Finally, we observe that
	\begin{equation}\label{eq-Commute}
		1\leq i<j \leq \alpha+1\quad\implies\quad M_i M_j  = \phi(p^{i-1})M_j = M_j M_i
	\end{equation}
	follows from a straightforward computation.

\subsection{The matrices $M_i(n)$ for general $n$}\label{SubsectionMPN}
	Having computed the matrices $M_i(p^{\alpha})$ for prime powers, we now turn to the problem of computing
	$M_i(n)$ for general $n$ and harnessing the power of Theorem \ref{TheoremPowerful} to produce
	new identities for Ramanujan sums.
	 The approach is straightforward enough, for the simultaneous diagonalization \eqref{eq-Similarity} 
	``tensors'' in the expected manner, much as the supercharacter tables did in Theorem \ref{TheoremSU}.  
	The difficulty in establishing this is mostly notational.  We therefore take a moment to
	introduce the somewhat elaborate notation which is required.
	
	Suppose that $(m,n)=1$ and let $d_1,d_2,\ldots,d_{\tau(m)}$ and $e_1,e_2,\ldots,e_{\tau(n)}$ 
	be ordered lists of the divisors of $m$ and $n$, respectively.  Having specified an order to the divisors of 
	$m$ and $n$, we can generate the matrices $M_i(m), W(m),D_i(m)$ and $M_{i'}(n),W(n), D_{i'}(n)$, respectively,
	as defined by Theorem \ref{TheoremPowerful} (i.e., apply Theorem \ref{TheoremPowerful} separately to the two groups
	$\Z/m\Z$ and $\Z/n\Z$, each endowed with the supercharacter theory described in Subsection \ref{SubsectionAutH}).
	
	Next we observe that the divisors of $mn$ are clearly the
	$\tau(mn) = \tau(m) \tau(n)$ numbers $d_i e_{i'}$ where $1 \leq i \leq \tau(m)$ and $1 \leq i' \leq \tau(n)$.
	However, we need the divisors of $mn$ to be listed in some specific linear order since such an ordering
	will allow us to label the corresponding superclasses and supercharacters on $\Z/mn\Z$.
	We therefore impose the lexicographic order on the set
	\begin{equation*}
		\{ d_i e_{i'} : 1 \leq i \leq \tau(m),\,\, 1 \leq i' \leq \tau(n) \}
	\end{equation*}
	of divisors of $mn$ which is induced by the lexicographic ordering of the Cartesian product
	$\{1,2,\ldots,\tau(m)\}\times \{1,2,\ldots,\tau(n)\}$.  This gives rise to an order-preserving bijection
	\begin{equation*}
		\sigma:\{1,2,\ldots,\tau(m)\}\times \{1,2,\ldots,\tau(n)\} \to \{1,2,\ldots,\tau(mn)\}
	\end{equation*}
	which implicitly provides us with an ordered list of the divisors of $mn$.  We now
	consider the matrices $M_1(mn),M_2(mn),\ldots,M_{\tau(mn)}(mn)$, the diagonal matrices
	$D_1(mn),D_2(mn),\ldots,D_{\tau(mn)}(mn)$, and the matrix $W(mn)$ which intertwines them.
	
	\begin{Lemma}\label{LemmaMegaTensor}
		Maintaining the notation and conventions described above,
		\begin{align}
			[M_{\sigma(i,i')}(mn)]_{\sigma(j,j'), \sigma(k,k')} &= [M_i(m)]_{j,k} [M_{i'}(n)]_{j',k'}, \label{eq-TensorM}\\[4pt]
			[D_{\sigma(i,i')}(mn)]_{\sigma(j,j'), \sigma(k,k')} &= [D_i(m)]_{j,k} [D_{i'}(n)]_{j',k'}, \label{eq-TensorD}\\[4pt]
			[W(mn)]_{\sigma(j,j'), \sigma(k,k')} &= [W(m)]_{j,k} [W(n)]_{j',k'},\label{eq-TensorW}
		\end{align}
		hold for all $1\leq i,j,k \leq \tau(m)$ and $1 \leq i',j',k' \leq \tau(n)$.
		In other words, the simultaneous diagonalization \eqref{eq-Similarity} of Theorem \ref{TheoremPowerful}
		is compatible with Kronecker products in the sense that
		\begin{equation*}
			\big(\underbrace{M_i(m) \otimes M_{i'}(n)}_{M_{\sigma(i,i')}(mn)} \big)
			\big(\underbrace{W(m) \otimes W(n)}_{W(mn)} \big)
			= \big(\underbrace{W(m) \otimes W(n)}_{W(mn)} \big)
			\big( \underbrace{D_i(m)\otimes D_{i'}(n)  }_{D_{\sigma(i,i')(mn)}} \big)
		\end{equation*}
		whenever $(m,n)=1$.
		In particular, $W(mn) = S(mn)$, the corresponding supercharacter table for $\Z/mn\Z$.
	\end{Lemma}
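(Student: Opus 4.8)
The plan is to prove the three entrywise identities \eqref{eq-TensorM}, \eqref{eq-TensorD} and \eqref{eq-TensorW} separately, to recognize them as the entrywise form of the Kronecker-product statements $M_{\sigma(i,i')}(mn)=M_i(m)\otimes M_{i'}(n)$, $D_{\sigma(i,i')}(mn)=D_i(m)\otimes D_{i'}(n)$ and $W(mn)=W(m)\otimes W(n)$, and finally to note that $W(mn)=S(mn)$ then falls out. The workhorse throughout is the Chinese Remainder isomorphism \eqref{eq-CRT}; write $Z\leftrightarrow(z_1,z_2)$ with $z_1=Z\bmod m$ and $z_2=Z\bmod n$. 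Since $(m,n)=1$ we have $(Z,mn)=(z_1,m)(z_2,n)$, and because every positive divisor of $mn$ factors uniquely as a divisor of $m$ times a divisor of $n$, the residue $Z$ lies in the superclass of $\Z/mn\Z$ indexed by the divisor $d_ke_{k'}$ --- that is, $(Z,mn)=mn/(d_ke_{k'})$ --- if and only if $z_1$ lies in the superclass $K_k(m)=\{a\in\Z/m\Z:(a,m)=m/d_k\}$ of $\Z/m\Z$ and $z_2$ lies in the analogous superclass $K_{k'}(n)$ of $\Z/n\Z$. Equivalently, under \eqref{eq-CRT} the superclass indexed by $\sigma(k,k')$ is exactly $K_k(m)\times K_{k'}(n)$; this is precisely the compatibility between the $\Aut$-induced superclass partition of $\Z/mn\Z$ and the product partition $\K_m\times\K_n$ that underlies the product supercharacter theory construction, and it is what makes the order-preserving bijection $\sigma$ the correct bookkeeping device.

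Granting this, I would dispatch \eqref{eq-TensorW} first. The computation given just before \eqref{eq-WPASPA} used only \eqref{eq-XDI}, \eqref{eq-SIDF}, \eqref{eq-KDI} and the reciprocity formula \eqref{eq-Reciprocity}, none of which requires the modulus to be a prime power; hence $W(N)=S(N)$ for every $N\geq 1$, and in particular $W(mn)=S(mn)$, $W(m)=S(m)$, $W(n)=S(n)$. Now \eqref{eq-Smn} gives $S(mn)\cong S(m)\otimes S(n)$, and with the divisors of $mn$ listed in the order defining $\sigma$ this becomes an honest equality: under \eqref{eq-CRT} the superclass indexed by $\sigma(j,j')$ is $K_j(m)\times K_{j'}(n)$, and since the Ramanujan sum is multiplicative in its subscript (Theorem~\ref{TheoremMultiplicative}) and even modulo its subscript by \eqref{eq-DependsGCD}, the supercharacter of $\Z/mn\Z$ indexed by $d_je_{j'}$ restricts to $(g_1,g_2)\mapsto c_{d_j}(g_1)c_{e_{j'}}(g_2)=\sigma_{X_j}(g_1)\sigma_{X_{j'}}(g_2)$, so the ordered lists of superclasses and supercharacters of $S(mn)$ match those of $S(m)\otimes S(n)$ term by term. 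Therefore $[W(mn)]_{\sigma(j,j'),\sigma(k,k')}=[S(m)\otimes S(n)]_{\sigma(j,j'),\sigma(k,k')}=[S(m)]_{j,k}[S(n)]_{j',k'}=[W(m)]_{j,k}[W(n)]_{j',k'}$. For \eqref{eq-TensorD}, recall from Theorem~\ref{TheoremPowerful} that $D_i(N)$ is diagonal with $[D_i(N)]_{j,k}=\delta_{j,k}w_{i,k}=\delta_{j,k}[S(N)]_{i,k}$; since $\sigma$ is a bijection, $\delta_{\sigma(j,j'),\sigma(k,k')}=\delta_{j,k}\delta_{j',k'}$, so \eqref{eq-TensorD} follows from the factorization of $[S(mn)]_{\sigma(i,i'),\sigma(k,k')}$ just obtained.

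The heart of the lemma is \eqref{eq-TensorM}, and I would argue it combinatorially. Fix $Z$ in the superclass of $\Z/mn\Z$ indexed by $\sigma(k,k')$, so that $z_1\in K_k(m)$ and $z_2\in K_{k'}(n)$ by the observation of the first paragraph; then $[M_{\sigma(i,i')}(mn)]_{\sigma(j,j'),\sigma(k,k')}$ counts the pairs $(X,Y)$ in $K_{\sigma(i,i')}(mn)\times K_{\sigma(j,j')}(mn)$ with $X+Y=Z$. Under \eqref{eq-CRT} the single equation $X+Y=Z$ becomes the pair $x_1+y_1=z_1$ in $\Z/m\Z$ and $x_2+y_2=z_2$ in $\Z/n\Z$, while the superclass constraints become $x_1\in K_i(m)$, $x_2\in K_{i'}(n)$, $y_1\in K_j(m)$, $y_2\in K_{j'}(n)$, again by the first paragraph. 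Hence the solution set is the Cartesian product of $\{(x_1,y_1)\in K_i(m)\times K_j(m):x_1+y_1=z_1\}$ with $\{(x_2,y_2)\in K_{i'}(n)\times K_{j'}(n):x_2+y_2=z_2\}$, so its cardinality is $[M_i(m)]_{j,k}\cdot[M_{i'}(n)]_{j',k'}$, which is \eqref{eq-TensorM}. Equivalently, this says that under $\C[\Z/mn\Z]\cong\C[\Z/m\Z]\otimes\C[\Z/n\Z]$ the superclass sum $\hat K_{\sigma(i,i')}$ corresponds to $\hat K_i\otimes\hat K_{i'}$, so the structure constants of \eqref{eq-cijk} multiply; either way the argument reduces to the same count.

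Finally, reading \eqref{eq-TensorM}--\eqref{eq-TensorW} against the definition of the Kronecker product and the lex-order bijection $\sigma$ yields the boxed identities $M_{\sigma(i,i')}(mn)=M_i(m)\otimes M_{i'}(n)$, $D_{\sigma(i,i')}(mn)=D_i(m)\otimes D_{i'}(n)$ and $W(mn)=W(m)\otimes W(n)=S(m)\otimes S(n)=S(mn)$; substituting these into the instance of \eqref{eq-Similarity} for $\Z/mn\Z$ gives the displayed commuting-square identity, which in any case can be re-derived from $(A\otimes B)(C\otimes D)=AC\otimes BD$ together with \eqref{eq-Similarity} applied to $m$ and $n$ individually. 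I expect the only real friction to be notational --- keeping the roles of $i,j,k$ and of $\sigma$ straight, and nailing down the superclass-product compatibility under the Chinese Remainder isomorphism --- rather than any genuine mathematical obstacle.
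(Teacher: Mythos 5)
Your proposal is correct and follows essentially the same route as the paper: the heart of the argument, \eqref{eq-TensorM}, is proved in both cases by using the Chinese Remainder isomorphism to identify the superclass of $\Z/mn\Z$ indexed by $\sigma(k,k')$ with $K_k(m)\times K_{k'}(n)$ and then observing that the solution set of $X+Y=Z$ splits as a Cartesian product, so the superclass constants multiply. Your treatment of \eqref{eq-TensorD} and \eqref{eq-TensorW} is organized slightly differently (you note that the computation establishing $W(p^{\alpha})=S(p^{\alpha})$ is valid for arbitrary modulus and then read off $D_i$ from the rows of $S$, whereas the paper computes the diagonal entries of $D_{\sigma(i,i')}(mn)$ directly from \eqref{eq-OmegaDefinition} and \eqref{eq-cmnxy}), but these are equivalent one-line verifications and not a genuinely different method.
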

	
	\begin{proof}
		The given lists $d_1,d_2,\ldots,d_{\tau(m)}$ and $e_1,e_2,\ldots,e_{\tau(n)}$ provide us with corresponding superclasses
		\begin{equation*}
			K_i(m) = \Big\{ a \in \Z/m\Z : (a,m) = \frac{m}{d_i} \Big\},\qquad
			K_{i'}(n) = \Big\{ b \in \Z/n\Z : (b,n) = \frac{n}{e_{i'}} \Big\},
		\end{equation*}
		in $\Z/m\Z$ and $\Z/n\Z$.  
		For each pair $(i,i')$ satisfying $1 \leq i \leq \tau(m)$ and $1 \leq i' \leq \tau(n)$, we define
		\begin{equation*}
			K_{\sigma(i,i')}(mn) = \Big\{ c \in \Z/mn\Z : (c,mn) = \frac{mn}{d_i e_{i'}} \Big\},
		\end{equation*}
		yielding a partition of $\Z/mn\Z$ into $\tau(m)\tau(n) = \tau(mn)$ superclasses.
		By the Chinese Remainder Theorem, the map $\Phi: \Z/m\Z\times \Z/n\Z\to \Z/mn\Z$ defined by
		$\Phi\big( (a,b) \big) = ab\, (\operatorname{mod} mn)$ is a ring isomorphism which satisfies
		\begin{equation}\label{eq-PKK}
			\Phi\big( K_i(m) \times K_{i'}(n) \big) = K_{\sigma(i,i')}(mn).
		\end{equation}
	
		Fix elements $z,z'$ in $K_k(m)$ and $K_{k'}(n)$, respectively, and let
		\begin{align*}
			a_{i,j,k}(m) &=|\{ (x,y) \in K_i(m) \times K_j(m) : x + y = z \}|,\\
			a_{i',j',k'}(n) &=|\{ (x',y') \in K_{i'}(n) \times K_{j'}(n) : x' + y' = z' \}|.
		\end{align*}
		Clearly the product $a_{i,j,k}(m)a_{i',j',k'}(n)$ equals the number of solutions to
		\begin{equation*}
			(x,x') + (y,y') = (z,z'),
		\end{equation*}
		where $(x,x')$ belongs to $K_i(m) \times K_{i'}(n)$ and $(y,y')$ belongs to $K_j(m) \times K_{j'}(n)$.
		Applying $\Phi$ to both sides of the preceding and using \eqref{eq-PKK}, it follows that
		$a_{i,j,k}(m)a_{i',j',k'}(n)$ equals the number of solutions to $X+Y = Z$ where
		$Z = \Phi(z,z')$ is a fixed element of $K_{\sigma(k,k')}(mn)$, $X$ belongs to $K_{\sigma(i,i')}(mn)$,
		and $Y$ belongs to $K_{\sigma(j,j')}(mn)$, respectively.  In other words,
		\begin{equation}\label{eq-AM}
			a_{\sigma(i,i'), \sigma(j,j'), \sigma(k,k')}(mn) = a_{i,j,k}(m)a_{i',j',k'}(n),
		\end{equation}
		so that
		\begin{equation*}
			[M_{\sigma(i,i')}(mn)]_{\sigma(j,j'), \sigma(k,k')} = [M_i(m)]_{j,k} [M_{i'}(n)]_{j',k'}.
		\end{equation*}
		This establishes \eqref{eq-TensorM}.
		
		Turning our attention to \eqref{eq-TensorD}, we recall from Theorem \ref{TheoremPowerful} that 
		the matrix $D_{\sigma(i,i')}(mn)$ is diagonal.  In fact, its diagonal entries are precisely
		\begin{align*}
			[D_{\sigma(i,i')}(mn)]_{\sigma(j,j'), \sigma(j,j')}
			&= \frac{|K_{\sigma(i,i')}(mn)|   c_{d_j e_{j'}} (\frac{mn}{d_i e_{i'}}) }{|K_{\sigma(j,j')}(mn)|}  && \text{by \eqref{eq-OmegaDefinition}, \eqref{eq-SIDF}} \\
			&= \frac{  |K_i(m)||K_{i'}(n)| c_{d_j}( \frac{m}{d_i}) c_{e_{j'}}(\frac{n}{e_{i'}})}{ |K_j(m)||K_{j'}(n)| } && \text{by \eqref{eq-PKK}, \eqref{eq-cmnxy}}\\
			&= \frac{ |K_i(m)| c_{d_j}(\frac{m}{d_i})}{|K_j(m)|} \cdot \frac{ |K_{i'}(n)| c_{e_{j'}}(\frac{n}{d_{i'}})}{|K_{j'}(n)|} \\
			&= [D_i(m)]_{j,j} [D_{i'}(n)]_{j',j'}.&& \text{by \eqref{eq-OmegaDefinition}}
		\end{align*}
		The proof of \eqref{eq-TensorW} is similar, and in fact much easier, since $W(m)$, $W(n)$, and $W(mn)$ do not depend
		upon the indices $i,i'$.  The fact that $W(mn) = S(mn)$ follows immediately from
		\eqref{eq-WPASPA} and \eqref{eq-Tensor}.
	\end{proof}

	Our primary interest in the preceding lemma lies in the following straightforward generalization.
	Let $n=p_1^{\alpha_1} p_2^{\alpha_2} \cdots p_r^{\alpha_r}$ denote the factorization of $n$
	into distinct primes and let $d_1,d_2,\ldots,d_{\tau(n)}$ denote the divisors of $n$.  Noting that
	\begin{equation*}
		\tau(n) = \prod_{\ell=1}^r (\alpha_{\ell}+1),
	\end{equation*}
	we let
	\begin{equation*}
		\sigma: \prod_{\ell=1}^r \{1,2,\ldots,\alpha_{\ell}+1\} \to \{1,2,\ldots,\tau(n)\}
	\end{equation*}
	be a bijection which preserves the lexicographic order on the Cartesian product $\prod_{\ell=1}^r \{1,2,\ldots,\alpha_{\ell}+1\}$.
	According to this labeling scheme,
	\begin{equation}\label{eq-DCS}
		d_{\sigma(i_1,i_2,\ldots,i_r)}=p_1^{i_1-1} p_2^{i_2-1}\cdots p_r^{i_r-1}
	\end{equation}
	is the $\sigma(i_1,i_2,\ldots,i_r)$th divisor of $n$.  In light of Lemma \ref{LemmaMegaTensor}, it follows that
	\begin{equation}\label{eq-TBC}
		D_{\sigma(i_1,i_2,\ldots,i_r)}(n) \cong
		\bigotimes_{\ell=1}^r D_{i_{\ell}}(p_{\ell}^{\alpha_{\ell}} )  \sim
		\bigotimes_{\ell=1}^r M_{i_{\ell}}(p_{\ell}^{\alpha_{\ell}}) \cong
		M_{\sigma(i_1,i_2,\ldots,i_r)}(n) ,
	\end{equation}
	where $\sim$ denotes similarity and $\cong$ denotes similarity by a permutation matrix.
	In particular, the eigenvalues of the diagonal matrix $D_{\sigma(i_1,i_2,\ldots,i_r)}(n)$
	are precisely the $\tau(n)$ numbers
	\begin{equation*}
		c_{d_{\sigma(i_1,i_2,\ldots,i_r)}}(d_j)
	\end{equation*}
	for $j=1,2,\ldots,\tau(n)$.  We can therefore obtain from \eqref{eq-TBC} a variety of formulas involving Ramanujan sums 
	by utilizing the detailed information about the matrices $M_{i_{\ell}}(p_{\ell}^{\alpha_{\ell}})$ we derived in 
	Subsection \ref{SubsectionMPA}.
	 
\subsection{Ramanujan sums and superclass constants}	 
	Fix a positive integer $n$
	and let $a_{i,j,k}=a_{i,j,k}(n)$, $M_i = M_i(n)$, and so forth.
	Now let us observe that
	\begin{align*}
	M_i 
	&= WD_iW^{-1} && \text{by Theorem \ref{TheoremPowerful}}\\
	&= S D_i S^{-1} && \text{by Lemma \ref{LemmaMegaTensor}}\\
	&= \frac{1}{n} SD_iS. && \text{by Theorem \ref{TheoremSU}}
	\end{align*}
	Comparing the $(j,k)$ entry in the equality $nM_i = SD_iS$ yields the formula
	\begin{equation*}
		\boxed{n a_{i,j,k} = \sum_{d|n} c_{d_i}(d) c_{d_j}(d) c_{\frac{n}{d}} \Big( \frac{n}{d_k} \Big).}
	\end{equation*}
	In light of \eqref{eq-Reciprocity}, we may rewrite the preceding in the more symmetric form
	\begin{equation}\label{eq-TSCC}
		\boxed{ n \phi(d_k) a_{i,j,k} = \sum_{d|n} \phi\left( \frac{n}{d} \right) c_{d_i}(d) c_{d_j}(d) c_{d_k}(d).}
	\end{equation}
	Since the right side of \eqref{eq-TSCC} is symmetric in $i,j,k$, it follows
	that $\phi(d_k) a_{i,j,k} = \phi(d_j) a_{k,i,j}$.  By definition, the superclass constants satisfy 
	$a_{k,i,j} = a_{i,k,j}$, whence
	\begin{equation}
		\phi(d_k) a_{i,j,k} = \phi(d_j) a_{i,k,j}.
	\end{equation}
	Indeed, this pattern is evident in the structure \eqref{eq-MPN} of the matrices
	$M_i(p^{\alpha})$.  Another consequence of 
	\eqref{eq-TSCC} is the following result.
	
	\begin{Theorem}
		If $n = p_1^{\alpha_1} p_2^{\alpha_2}\cdots p_r^{\alpha_r}$ is the canonical factorization
		of $n$ into distinct primes $p_1,p_2,\ldots, p_r$ and $d = p_1^{\beta_1} p_2^{\beta_2} \cdots p_r^{\beta_r}$
		is a divisor of $n$, then
		\begin{equation}\label{eq-Cubic}
		\boxed{
			\sum_{d'|n} \phi\left(\frac{n}{d'}\right) \big(c_d(d')\big)^3 =
			\begin{cases}
			0 & \text{if $2|d$},\\[4pt]
			n\phi(d) \displaystyle\prod_{\ell=1}^r  \big\lceil  p_{\ell}^{\beta_{\ell}} - 2 p_{\ell}^{\beta_{\ell}-1} \big\rceil
			& \text{if $2\nmid d$},
			\end{cases}
		}
		\end{equation}
		where $\lceil\,\cdot\,\rceil$ denotes the ceiling function.
	\end{Theorem}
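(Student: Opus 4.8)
The plan is to recognize the left-hand side as $n\phi(d)$ times a single diagonal superclass constant, and then to evaluate that constant by factoring it over the prime powers of $n$ using \eqref{eq-MPN}. Concretely, regard $d$ as the $i$th divisor $d_i$ of $n$ in whatever ordered list we have fixed, and specialize the identity \eqref{eq-TSCC} to the case $i = j = k$; this gives
\begin{equation*}
\sum_{d'\mid n} \phi\!\left(\tfrac{n}{d'}\right)\big(c_{d}(d')\big)^3 = n\,\phi(d)\,a_{i,i,i}(n),
\end{equation*}
so the problem reduces entirely to computing $a_{i,i,i}(n)$, the number of pairs $(x,y) \in K_i \times K_i$ with $x + y = z$ for a fixed $z \in K_i$.

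Next I would exploit the multiplicativity of the superclass constants. Writing $n = p_1^{\alpha_1}\cdots p_r^{\alpha_r}$ and $d = p_1^{\beta_1}\cdots p_r^{\beta_r}$, the divisor $d$ carries the label $\sigma(\beta_1 + 1, \ldots, \beta_r + 1)$ in the notation of \eqref{eq-DCS}, and iterating \eqref{eq-AM} (equivalently, Lemma \ref{LemmaMegaTensor}) across the prime-power factors yields
\begin{equation*}
a_{i,i,i}(n) = \prod_{\ell=1}^{r} a_{\beta_\ell + 1,\, \beta_\ell + 1,\, \beta_\ell + 1}\big(p_\ell^{\alpha_\ell}\big).
\end{equation*}
Each factor is then read off from \eqref{eq-MPN}: the diagonal entry of $M_m(p^\alpha)$ is $1$ when $m = 1$ (since $M_1(p^\alpha) = I$) and $p^{m-1} - 2p^{m-2}$ when $m \geq 2$. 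Hence $a_{\beta_\ell+1,\beta_\ell+1,\beta_\ell+1}(p_\ell^{\alpha_\ell})$ equals $1$ if $\beta_\ell = 0$ and equals $p_\ell^{\beta_\ell} - 2p_\ell^{\beta_\ell - 1}$ if $\beta_\ell \geq 1$.

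Finally I would assemble the two cases. If $2 \mid d$, the factor for the prime $2$ has exponent $\geq 1$ and equals $2^{\beta} - 2\cdot 2^{\beta-1} = 0$, so the product vanishes, giving the first branch. If $2 \nmid d$, every prime dividing $d$ is odd, so each nonzero factor $p_\ell^{\beta_\ell} - 2p_\ell^{\beta_\ell - 1}$ is already a positive integer and the ceiling is harmless, while the factors with $\beta_\ell = 0$ equal $1 = \lceil 1 - 2/p_\ell\rceil = \lceil p_\ell^{\beta_\ell} - 2 p_\ell^{\beta_\ell-1}\rceil$ for $p_\ell$ odd; thus the ceiling merely serves to fold the $\beta_\ell = 0$ terms into one uniform product, and we arrive at $a_{i,i,i}(n) = \prod_{\ell=1}^r \lceil p_\ell^{\beta_\ell} - 2 p_\ell^{\beta_\ell-1}\rceil$. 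Multiplying by $n\phi(d)$ yields the stated formula.

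The only genuinely substantive ingredient is \eqref{eq-MPN} itself, which describes the matrices $M_i(p^\alpha)$ and is verified in the appendix; granting that, the argument is essentially bookkeeping with the labeling bijection $\sigma$ and with the ceiling convention. The step most in need of care is keeping the index translation $d_i = p_1^{\beta_1}\cdots p_r^{\beta_r} \leftrightarrow (\beta_1+1,\ldots,\beta_r+1)$ consistent when iterating \eqref{eq-AM}, and checking that the ceiling function really does exactly what is needed on the $\beta_\ell = 0$ factors.
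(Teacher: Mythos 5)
Your argument is, in substance, the paper's own proof: the paper likewise sets $i=j=k$ in \eqref{eq-TSCC} and reads $a_{i,i,i}(p^{\alpha})$ off from \eqref{eq-MPN}; the only organizational difference is that the paper first reduces to prime powers via Corollary \ref{CorollarySupermultiplicative} and the multiplicativity of $\phi$, whereas you apply \eqref{eq-TSCC} for general $n$ and then factor $a_{i,i,i}(n)$ through \eqref{eq-AM}. Either order works, and your computation of $a_{i,i,i}(n)$ as the product of the local constants ($1$ when $\beta_\ell=0$, $p_\ell^{\beta_\ell}-2p_\ell^{\beta_\ell-1}$ when $\beta_\ell\geq 1$) is correct.

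The one genuine gap is exactly the step you flagged as needing care, and you did not close it: you verify $\lceil p_\ell^{\beta_\ell}-2p_\ell^{\beta_\ell-1}\rceil = 1$ for $\beta_\ell=0$ only when $p_\ell$ is odd. For $p_\ell=2$ and $\beta_\ell=0$ one gets $\lceil 1-1\rceil = 0$, not $1$, so the identification of your (correct) product with the boxed formula breaks down whenever $n$ is even and $d$ is odd. In that case the boxed right-hand side is $0$ while the left-hand side is not; e.g.\ $n=2$, $d=1$ gives $\sum_{d'\mid 2}\phi(2/d')\big(c_1(d')\big)^3 = \phi(2)+\phi(1)=2$, whereas the formula yields $2\cdot 1\cdot\lceil 0\rceil = 0$. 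This is a defect of the statement itself (the ceiling device does not fold in the $p_\ell=2$, $\beta_\ell=0$ factor correctly), and the paper's one-line proof is silent on it as well; your value $n\phi(d)\,a_{i,i,i}(n)$ is the correct one, and to match it the product should be taken only over primes with $\beta_\ell\geq 1$ (or the $2\nmid d$ branch should additionally assume $2\nmid n$, or one should use a convention at $\beta_\ell=0$ that returns $1$ for every prime). You should state this explicitly rather than asserting that ``the ceiling merely serves to fold the $\beta_\ell=0$ terms into one uniform product.''
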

	
\begin{proof}
By Corollary \ref{CorollarySupermultiplicative} and the multiplicativity of the Euler totient function,
it suffices to establish the desired identity when $n=p^{\alpha}$ is a prime power.
Let $d=d_i = p^{i-1}$ and recall from \eqref{eq-MPN} 
that $a_{i,i,i}(p^{\alpha}) = 1$ if $i = 1$ and $a_{i,i,i}(p^{\alpha}) = p^{i-1}-2p^{i-2}$ otherwise.
Setting $i=j=k$ in \eqref{eq-TSCC}, we obtain \eqref{eq-Cubic} for $n= p^{\alpha}$.
\end{proof}

\subsection{Power sum identities}\label{SubsectionPower}
	The preceding material now allows us to rapidly produce a wide variety of power sum identities 
	involving Ramanujan sums, all of which appear to be novel.  This highlights our larger argument, namely that
	supercharacter theory and superclass arithmetic are powerful tools which can yield new insights,
	even when applied to the most elementary of groups (i.e., the cyclic groups $\Z/n\Z$).

	\begin{Theorem}\label{TheoremPositivePowers}
		If $n = p_1^{\alpha_1} p_2^{\alpha_2}\cdots p_r^{\alpha_r}$ is the canonical factorization
		of $n$ into distinct primes $p_1,p_2,\ldots, p_r$ and $d = p_1^{\beta_1} p_2^{\beta_2} \cdots p_r^{\beta_r}$
		is a divisor of $n$, then for $s=0,1,2,\ldots$ we have
		\begin{equation*}
			\boxed{
				\sum_{k|n} \big(c_d(k) \big)^s
				= \prod_{\ell=1}^{r} \Big( (\alpha_{\ell} - \beta_{\ell} +1 )
				\phi(p_{\ell}^{\beta_{\ell}})^s + (-1)^s \lfloor p_{\ell}^{\beta_{\ell}-1} \rfloor^s \Big)	
			}
		\end{equation*}	
		Here $\lfloor \,\cdot\, \rfloor$ denotes the floor function.
	\end{Theorem}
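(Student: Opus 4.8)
The plan is to recognize the sum on the left as the trace of a power of the diagonal matrix $D_i(n)$ and then to reduce to prime powers by means of the Kronecker decomposition \eqref{eq-TBC}. Writing $d = d_i$, recall from the discussion following \eqref{eq-TBC} that $D_i(n)$ is diagonal with diagonal entries exactly the numbers $c_{d_i}(d_j)$ as $d_j$ ranges over the divisors of $n$. Consequently
\begin{equation*}
	\sum_{k\mid n}\big(c_d(k)\big)^s = \sum_{j=1}^{\tau(n)}\big(c_{d_i}(d_j)\big)^s = \tr\big(D_i(n)^s\big),
\end{equation*}
so it suffices to compute this trace (for $s=0$ one reads $0^0$ as $0$, which is the convention under which the right-hand side of the theorem is correct).

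First I would invoke \eqref{eq-TBC}: if $i = \sigma(i_1,i_2,\ldots,i_r)$, so that $\beta_\ell = i_\ell - 1$ for each $\ell$, then $D_i(n) \cong \bigotimes_{\ell=1}^r D_{i_\ell}(p_\ell^{\alpha_\ell})$. Since the trace is invariant under the permutation similarity $\cong$, is multiplicative over Kronecker products by \eqref{eq-KroneckerTrace}, and $\big(\bigotimes_\ell A_\ell\big)^s \cong \bigotimes_\ell A_\ell^s$, we obtain
\begin{equation*}
	\tr\big(D_i(n)^s\big) = \prod_{\ell=1}^r \tr\big(D_{i_\ell}(p_\ell^{\alpha_\ell})^s\big),
\end{equation*}
which reduces the identity to the single-prime case $n = p^\alpha$, $d = p^\beta$ with $0 \le \beta \le \alpha$.

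For the prime-power case, \eqref{eq-PrimePowerD} tells us that $D_\beta(p^\alpha)$ is diagonal with entries $c_{p^\beta}(p^m)$ for $m = 0,1,\ldots,\alpha$, whence $\tr(D_\beta(p^\alpha)^s) = \sum_{m=0}^{\alpha}\big(c_{p^\beta}(p^m)\big)^s$. Reading off \eqref{eq-RamanujanPrimePowers} (and using $c_1\equiv 1$ when $\beta=0$), the value $c_{p^\beta}(p^m)$ equals $\phi(p^\beta)$ for the $\alpha-\beta+1$ exponents $m\ge\beta$, equals $-p^{\beta-1} = -\lfloor p^{\beta-1}\rfloor$ for the single exponent $m=\beta-1$, and is zero otherwise; when $\beta=0$ that middle exponent is absent and correspondingly $\lfloor p^{-1}\rfloor = 0$, so its contribution drops out. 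Raising to the $s$th power and summing over $m$ gives
\begin{equation*}
	\tr\big(D_\beta(p^\alpha)^s\big) = (\alpha-\beta+1)\phi(p^\beta)^s + (-1)^s\lfloor p^{\beta-1}\rfloor^s,
\end{equation*}
and substituting this into the product over $\ell$ yields the claimed formula.

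I do not anticipate any genuine obstacle: the delicate points are merely aligning the order-preserving bijection $\sigma$ with the exponents $\beta_\ell = i_\ell-1$ when applying \eqref{eq-TBC}, and bookkeeping the degenerate contributions (the exponent $\beta=0$, and the $0^s$ convention at $s=0$), all routine. It is worth noting that the matrices $D_i$ can be bypassed entirely: by Theorem \ref{TheoremMultiplicative} together with \eqref{eq-DependsGCD}, every divisor $k = p_1^{m_1}\cdots p_r^{m_r}$ of $n$ satisfies $c_d(k) = \prod_{\ell=1}^r c_{p_\ell^{\beta_\ell}}(p_\ell^{m_\ell})$, so that $\sum_{k\mid n}(c_d(k))^s$ factors directly as $\prod_{\ell=1}^r \sum_{m=0}^{\alpha_\ell}(c_{p_\ell^{\beta_\ell}}(p_\ell^m))^s$, after which the same prime-power evaluation completes the proof.
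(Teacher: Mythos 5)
Your proof is correct, but the heart of it --- the prime-power evaluation --- proceeds quite differently from the paper's. The paper reduces to $n=p^{\alpha}$ via Corollary \ref{CorollarySupermultiplicative} just as you do, and likewise identifies the sum with $\tr D_{\beta+1}^s(p^{\alpha})$; but it then passes to $\tr M_{\beta+1}^s(p^{\alpha})$ via the similarity of Theorem \ref{TheoremPowerful} and evaluates that trace by exploiting the block structure \eqref{eq-MPBC} of $M_{\beta+1}(p^{\alpha})$, an argument involving the polynomial identity \eqref{eq-bcx} and the eigenvalues of a $2\times 2$ surrogate matrix. You instead read the diagonal entries of $D_{\beta+1}(p^{\alpha})$ directly off \eqref{eq-PrimePowerD} and \eqref{eq-RamanujanPrimePowers} and sum their $s$th powers, which bypasses the matrices $M_i$, the superclass constants, and Theorem \ref{TheoremPowerful} entirely --- as your closing remark makes explicit, the identity already follows from Theorem \ref{TheoremMultiplicative} together with the prime-power values. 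Your route is shorter and more elementary, and it exposes the fact that this particular identity does not really need the superclass machinery; the paper's route is deliberately chosen to showcase that machinery, since the computation of $\tr$ of words in the $M_i$'s is what powers the subsequent, less elementary identities of that subsection (mixed products, negative exponents). Your attention to the $s=0$ convention is also warranted: taken literally, $\tr D_{\beta+1}^0(p^{\alpha})=\alpha+1$ disagrees with the stated right-hand side when $d$ is not squarefree, an ambiguity equally present in the paper's own intermediate formula $\tr M_i^s(p^{\alpha})=(\alpha-i+2)\phi(p^{i-1})^s+(-p^{i-2})^s$ at $s=0$. (One trivial slip: the matrix you call $D_{\beta}(p^{\alpha})$ should be $D_{\beta+1}(p^{\alpha})$, since $d_i=p^{i-1}$; your description of its entries is nonetheless the correct one.)
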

	
	\begin{proof}
		By Corollary \ref{CorollarySupermultiplicative} it suffices to establish the desired formula
		when $n = p^{\alpha}$ is a prime power.	
		Recall from \eqref{eq-MPN} that for $2\leq i \leq \alpha+1$ we have
		\begin{equation}\label{eq-MPBC}
			M_i(p^{\alpha}) = \minimatrix{0}{\vec{b}}{\vec{c}}{x} \oplus \phi(p^{i-1}) I_{\alpha-i+1}
		\end{equation}
		where $\vec{b}$ and $\vec{c}$ are certain $(i-1)\times 1$ and $1 \times (i-1)$ matrices which satisfy
		\begin{equation*}
			\vec{c} \vec{b} = \phi(p^{i-1})\big( 1 + (p-1) + (p^2-p) + \cdots + (p^{i-2} - p^{i-3} \big) = \phi(p^{i-1})p^{i-2}
		\end{equation*}
		and where $x = p^{i-1} - 2p^{i-2}$.  
		A short inductive argument confirms that for each $s =0, 1,2,\ldots$ there exist corresponding
		polynomials $f_{11},f_{12},f_{21},f_{22}$ such that
		\begin{equation}\label{eq-bcx}
			\minimatrix{0}{\vec{b}}{\vec{c}}{x}^s 
			= \minimatrix{ f_{11}(\vec{b}\vec{c}) }{ f_{12}(\vec{b}\vec{c})\vec{b}}{ \vec{c} f_{21}(\vec{b}\vec{c}) }{ f_{22}(\vec{c}\vec{b})}.
		\end{equation}
		In particular, note that the preceding formula also holds when $\vec{b}$ and $\vec{c}$ are replaced by positive
		real numbers $b$ and $c$ and when the matrices involved are regarded simply as $2 \times 2$ matrices with real entries.
		Letting $b,c > 0$ satisfy
		\begin{equation}\label{eq-cbxcbx}
			cb = \vec{c}\vec{b}= \phi(p^{i-1})p^{i-2},
		\end{equation}
		it follows from \eqref{eq-bcx} and an elementary diagonalization argument that
		\begin{align*}
			\tr \minimatrix{0}{\vec{b}}{\vec{c}}{x}^s 
			&= \tr f_{11}(\vec{b}\vec{c}) + \tr f_{22}(\vec{c}\vec{b}) \\
			&= \tr f_{11}(\vec{c}\vec{b}) + \tr f_{22}(\vec{c}\vec{b}) \\
			&= \tr f_{11}(cb) + \tr f_{22}(cb) \\
			&= \tr \minimatrix{0}{b}{c}{x}^s \\
			&= \left( \frac{x + \sqrt{4bc+x^2}}{2} \right)^s + \left( \frac{x - \sqrt{4bc+x^2}}{2} \right)^s  \\
			&= \left[ \frac{(p^{i-1} - 2p^{i-2}) + p^{i-1}}{2} \right]^s + \left[ \frac{(p^{i-1} - 2p^{i-2}) - p^{i-1}}{2} \right]^s  \\
			&= \left( p^{i-1}-p^{i-2} \right)^s + \left( -p^{i-2}\right)^s  \\
			&= \phi( p^{i-1})^s + \left( -p^{i-2}\right)^s.		
		\end{align*}
		Returning to \eqref{eq-MPBC}, we find that
		\begin{equation}\label{eq-MIPSPS}
			\tr M_i^s(p^{\alpha}) =  (\alpha - i + 2)\phi( p^{i-1})^s + (-p^{i-2})^s
		\end{equation}
		when $2 \leq i \leq \alpha+1$.
		Since $M_1(p^{\alpha})$ is the $(\alpha+1)\times (\alpha+1)$ identity matrix, setting $\beta = i-1$ it follows that
		\begin{align*}
			\sum_{k|p^{\alpha}} \big( c_{p^{\beta}}(k) \big)^s
			&= \tr D_{\beta+1}^s(p^{\alpha}) && \text{by \eqref{eq-PrimePowerD}} \\[-8pt]
			&= \tr M_{\beta+1}^s(p^{\alpha}) && \text{by Theorem \ref{TheoremPowerful}}\\
			&=(\alpha - \beta + 1)\phi( p^{\beta})^s + (-1)^s \lfloor p^{\beta-1}\rfloor^s, && \text{by \eqref{eq-MIPSPS}}
		\end{align*}
		as required.
	\end{proof}
	
	In light of \eqref{eq-Commute}, it is not hard to generate more complicated variants of the preceding formula.
	For instance, since
	\begin{equation*}
		\tr M_i^s(p^{\alpha}) M_j^s(p^{\alpha}) =
		\begin{cases}
			\phi(p^{i-1})^s \tr M_j^s(p^{\alpha}) & \text{if $i < j$}, \\[5pt]
			\tr M_i^{2s}(p^{\alpha}) & \text{if $i = j$},\\[5pt]
			\phi(p^{j-1})^s \tr M_i^s(p^{\alpha}) & \text{if $i > j$},
		\end{cases}
	\end{equation*}
	the quantity $\tr M_i^s(p^{\alpha}) M_j^s(p^{\alpha})$ can be evaluated using similar methods.
	A little algebra then yields the following generalization of Theorem \ref{TheoremPositivePowers}.
	
	\begin{Theorem}
		Let $n = p_1^{\alpha_1} p_2^{\alpha_2}\cdots p_r^{\alpha_r}$ be the canonical factorization
		of $n$ into distinct primes $p_1,p_2,\ldots, p_r$.  If $d = p_1^{\beta_1} p_2^{\beta_2} \cdots p_r^{\beta_r}$ and
		$d' = p_1^{\gamma_1} p_2^{\gamma_2} \cdots p_r^{\gamma_r}$
		are divisors of $n$, then for $s=0,1,2,\ldots$ we have
		\begin{align*}
			&\sum_{k|n} \big(c_d(k) c_{d'}(k)\big)^s\\
			&\quad= \prod_{\ell=1}^{r} 
			\Big( (\alpha_{\ell} - \max\{\beta_{\ell},\gamma_{\ell}\} + 1)
			\phi(p_{\ell}^{\beta_{\ell}})^s \phi(p_{\ell}^{\gamma_{\ell}})^s  \\
			&\qquad \qquad+ 
			\big( \lfloor p^{\min\{\beta_{\ell},\gamma_{\ell}\}-1} \rfloor - (1-\delta_{\beta_{\ell},\gamma_{\ell}})
			p^{\min\{\beta_{\ell},\gamma_{\ell}\}} \big)^s
			\lfloor p^{ \max\{ \beta_{\ell}, \gamma_{\ell}\}-1} \rfloor^s
			\Big),
		\end{align*}	
		where $\delta$ denotes the Kronecker delta function.
	\end{Theorem}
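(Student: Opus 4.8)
The plan is to imitate the proof of Theorem~\ref{TheoremPositivePowers}, replacing its computation of a single trace $\tr M_i^s$ by a computation of $\tr(M_i^sM_j^s)$ for the two commuting matrices attached to $d$ and $d'$. First I would reduce to the case of a prime power. By the multiplicativity of $c_d$ in its subscript (Theorem~\ref{TheoremMultiplicative}) together with \eqref{eq-DependsGCD}, if $k\mid n$ has prime-power factorization $k=\prod_\ell k_\ell$ then $c_d(k)=\prod_\ell c_{p_\ell^{\beta_\ell}}(k_\ell)$ and $c_{d'}(k)=\prod_\ell c_{p_\ell^{\gamma_\ell}}(k_\ell)$, so $\sum_{k\mid n}(c_d(k)c_{d'}(k))^s$ factors as $\prod_{\ell=1}^r\big(\sum_{k\mid p_\ell^{\alpha_\ell}}(c_{p_\ell^{\beta_\ell}}(k)\,c_{p_\ell^{\gamma_\ell}}(k))^s\big)$, just as in Corollary~\ref{CorollarySupermultiplicative}. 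Hence it suffices to establish the formula for $n=p^\alpha$, $d=p^\beta$, $d'=p^\gamma$ with $0\leq\beta,\gamma\leq\alpha$; I would take $s\geq 1$ throughout.

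Next I would convert the prime-power sum into a trace. Writing $i=\beta+1$ and $j=\gamma+1$, formula \eqref{eq-PrimePowerD} shows that the diagonal of $D_i(p^\alpha)$ runs exactly once through the numbers $c_{p^\beta}(k)$ as $k$ ranges over the divisors of $p^\alpha$, whence $\sum_{k\mid p^\alpha}(c_{p^\beta}(k)c_{p^\gamma}(k))^s=\tr\big(D_i(p^\alpha)^sD_j(p^\alpha)^s\big)$. Because Theorem~\ref{TheoremPowerful} gives $D_i(p^\alpha)=W^{-1}M_i(p^\alpha)W$ with one fixed invertible matrix $W=S(p^\alpha)$ (see \eqref{eq-WPASPA}), and the $D_i$ are diagonal and so commute, this trace equals $\tr\big(M_i(p^\alpha)^sM_j(p^\alpha)^s\big)$. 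Now I would invoke the commutation relations \eqref{eq-Commute} --- the same ones used in the discussion preceding this theorem --- to get $\tr(M_i^sM_j^s)=\phi(p^{i-1})^s\tr M_j^s$ when $i<j$, $\tr M_i^{2s}$ when $i=j$, and $\phi(p^{j-1})^s\tr M_i^s$ when $i>j$; the single-matrix traces are then read off from the block decomposition \eqref{eq-MPBC}, namely $\tr M_i^s(p^\alpha)=(\alpha-i+2)\phi(p^{i-1})^s+(-p^{i-2})^s$ for $i\geq 2$ by \eqref{eq-MIPSPS}, together with $\tr M_1^s(p^\alpha)=\alpha+1$ since $M_1(p^\alpha)$ is the identity.

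Finally I would assemble these ingredients. Using the symmetry of the left-hand side in $d$ and $d'$ I may assume $\beta\leq\gamma$, so that $\gamma\geq 1$ whenever $\beta\neq\gamma$ and the instance of \eqref{eq-MIPSPS} that is needed is legitimate. Substitution gives $\phi(p^\beta)^s\big[(\alpha-\gamma+1)\phi(p^\gamma)^s+(-p^{\gamma-1})^s\big]$ in the case $\beta<\gamma$, and $(\alpha-\beta+1)\phi(p^\beta)^{2s}+p^{2(\beta-1)s}$ in the case $\beta=\gamma\geq 1$ (with the value $\alpha+1$ when $\beta=\gamma=0$). The elementary identity $-\phi(p^\beta)=\lfloor p^{\beta-1}\rfloor-p^\beta$, valid for every $\beta\geq 0$, turns the prefactor $\phi(p^\beta)^s$ into the floor expression displayed in the statement, the factor $1-\delta_{\beta,\gamma}$ is exactly what interpolates between the two cases, and matching $p^{\gamma-1}$ with $\lfloor p^{\gamma-1}\rfloor$ (legitimate since $\gamma\geq 1$ there) produces the single-prime version of the claimed formula; taking the product over $\ell$ from the first step then finishes the proof. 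I expect no conceptual obstacle: the real work is the bookkeeping in this last step --- reconciling the three cases of the trace computation into a single product and handling the exponent-$0$ factors and the $\delta_{\beta_\ell,\gamma_\ell}$ term uniformly --- and everything else is a faithful transcription of the proof of Theorem~\ref{TheoremPositivePowers}.
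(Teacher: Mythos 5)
Your proposal follows precisely the route the paper intends: reduction to the prime-power case by multiplicativity, conversion of the divisor sum to $\tr\big(M_i^s(p^\alpha)M_j^s(p^\alpha)\big)$ via Theorem~\ref{TheoremPowerful} and \eqref{eq-PrimePowerD}, the commutation relations \eqref{eq-Commute}, and the single-matrix traces \eqref{eq-MIPSPS} --- which is exactly the argument the paper compresses into the displayed case analysis and the phrase ``a little algebra then yields.'' Your bookkeeping in the final step checks out (and your restriction to $s\geq 1$ is prudent, since the $s=0$ case of the stated formula is degenerate), so the proof is correct and essentially identical to the paper's.
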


	Although it might at first appear that the preceding results could be adapted to handle negative
	exponents $s$, there are a few minor obstacles.  First is the fact that $M_i(p^{\alpha})$ is invertible if and only if $i = 1$ or $i = 2$.
	This corresponds to the fact that $c_d(k)$ vanishes for certain values of $k$ if $d$ is not square-free.  Indeed, this
	can be seen directly from von Sterneck's formula \eqref{eq-vonSterneck} and the definition 
	\eqref{eq-PrimePowerMobiusPhi} of the M\"obius $\mu$-function.  The correct adaptation of
	Theorem \ref{TheoremPositivePowers} for negative exponents is the following.

	\begin{Theorem}
		If $n = p_1^{\alpha_1} p_2^{\alpha_2}\cdots p_r^{\alpha_r}$ is the canonical factorization
		of $n$ into distinct primes $p_1,p_2,\ldots, p_r$ and $d = p_1^{\beta_1} p_2^{\beta_2} \cdots p_r^{\beta_r}$
		is a square-free divisor of $n$ (i.e., $0 \leq \beta_i \leq 1$ for $i=1,2,\ldots,r$), then for $s =0,1,2,\ldots$ we have
		\begin{equation*}
			\boxed{
			\sum_{k|n} \frac{1}{\big(c_{d}(k) \big)^s} 
			= \prod_{\ell=1}^r \left( \frac{\alpha_{\ell}}{ (p_{\ell}-1)^{\beta_{\ell} s} } + (-1)^{\beta_{\ell}s} \right).
			}
		\end{equation*}
	\end{Theorem}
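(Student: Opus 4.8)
The plan is to follow the same template as the proof of Theorem \ref{TheoremPositivePowers}: reduce to prime powers by multiplicativity, and then read off the answer from the explicit values \eqref{eq-RamanujanPrimePowers} of Ramanujan sums at prime powers.

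First I would record why the sum is meaningful at all. By von Sterneck's formula \eqref{eq-vonSterneck} we have $c_d(k) = \mu(d/(d,k))\,\phi(d)/\phi\!\left(d/(d,k)\right)$, and since $d$ is square-free so is $d/(d,k)$; hence $\mu(d/(d,k)) \ne 0$ and therefore $c_d(k) \ne 0$ for every $k$. (Concretely, by \eqref{eq-DependsGCD} the value $c_{p^{\beta}}(k)$ with $\beta\in\{0,1\}$ depends only on whether $p\mid k$: it equals $1$ if $\beta=0$, and equals $p-1$ or $-1$ according as $p\mid k$ or $p\nmid k$ if $\beta=1$, by \eqref{eq-RamanujanPrimePowers}.) Equivalently, this is the statement that the matrix $M_{\beta+1}(p^{\alpha})$ is invertible, which by the block shape \eqref{eq-MPN} happens precisely when $\beta+1\le 2$, i.e. when $\beta\le 1$; this is exactly where the square-free hypothesis enters.

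Next I would reduce to $n=p^{\alpha}$. Writing $d=\prod_\ell p_\ell^{\beta_\ell}$, the factors $p_\ell^{\beta_\ell}$ are pairwise coprime, so Theorem \ref{TheoremMultiplicative} gives $c_d(k)=\prod_\ell c_{p_\ell^{\beta_\ell}}(k)$, and for a divisor $k=\prod_\ell p_\ell^{\gamma_\ell}$ of $n$ the factor $c_{p_\ell^{\beta_\ell}}(k)$ depends only on $\gamma_\ell$ (by \eqref{eq-DependsGCD}). Hence
\[
	\sum_{k\mid n}\frac{1}{\big(c_d(k)\big)^s}
	=\prod_{\ell=1}^r\ \sum_{\gamma=0}^{\alpha_\ell}\frac{1}{\big(c_{p_\ell^{\beta_\ell}}(p_\ell^{\gamma})\big)^s},
\]
so it suffices to evaluate one prime-power factor. (Alternatively one may phrase this as $\tr D_{\beta+1}^{-s}(p^{\alpha})$ using Theorem \ref{TheoremPowerful} together with the tensor compatibility \eqref{eq-TBC} of Lemma \ref{LemmaMegaTensor}, which again is legitimate only because $D_{\beta+1}(p^{\alpha})$ is invertible for $\beta\le 1$.) For the prime-power factor: if $\beta=0$ then $c_1\equiv 1$ and the sum is $\alpha+1=\alpha/(p-1)^0+(-1)^0$; if $\beta=1$ then among the divisors $1,p,\dots,p^{\alpha}$ of $p^{\alpha}$ exactly one, namely $k=1$, has $p\nmid k$ (giving $c_p(1)=-1$) while the remaining $\alpha$ divisors give $c_p(p^{\gamma})=p-1$, so the sum is $\alpha/(p-1)^s+(-1)^s$. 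In both cases this equals $\alpha/(p-1)^{\beta s}+(-1)^{\beta s}$, and multiplying over $\ell$ gives the boxed identity.

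The hard part here is essentially only the bookkeeping of well-definedness: one must invoke square-freeness to guarantee $c_d$ never vanishes (equivalently that we stay in the invertible range $i\in\{1,2\}$ of the matrices $M_i(p^{\alpha})$ of \eqref{eq-MPN}), and one must be slightly careful about the degenerate term $\lfloor p^{\beta-1}\rfloor^s$ appearing in Theorem \ref{TheoremPositivePowers} — for $\beta=1$ it is $1^s=1$, while the $\beta=0$ case is disposed of separately by the trivial fact $c_1\equiv 1$. Everything else is routine.
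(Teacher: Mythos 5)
Your proof is correct, and its skeleton (reduce to a prime power by multiplicativity, then evaluate the prime-power factor) matches the paper's. The difference lies in how the prime-power case is handled: the paper evaluates $\sum_{k\mid p^{\alpha}} \big(c_{p^{\beta}}(k)\big)^{-s}$ as $\tr M_{\beta+1}^{-s}(p^{\alpha})$, reading off the eigenvalues $-1$ and $p-1$ of the upper-left $2\times 2$ block of $M_2(p^{\alpha})$ together with the $(p-1)I_{\alpha-1}$ block, whereas you compute the sum directly from the explicit values $c_p(k)\in\{p-1,-1\}$ given by \eqref{eq-RamanujanPrimePowers} --- equivalently, you take $\tr D_{\beta+1}^{-s}(p^{\alpha})$ without passing to the similar matrix $M_{\beta+1}(p^{\alpha})$. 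The two computations are the same sum in disguise, since $D_{\beta+1}$ and $M_{\beta+1}$ are similar; your route is the more elementary one and makes the role of square-freeness transparent (it is exactly the condition that $c_d$ never vanishes), while the paper's route is chosen to advertise the superclass-arithmetic machinery, in which the invertibility of $M_{\beta+1}(p^{\alpha})$ precisely for $\beta\le 1$ plays the corresponding role. Your preliminary well-definedness check via von Sterneck's formula, and your explicit factorization of the divisor sum into prime-power factors, are worthwhile details that the paper leaves implicit.
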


	\begin{proof}
	As before, it suffices to prove the desired formula when $n = p^{\alpha}$ is a prime power.
	The upper-left $2 \times 2$ submatrix of the $(\alpha+1) \times (\alpha+1)$ matrix
	\begin{equation*}
		M_2(p^{\alpha}) = \small
		\left[
		\begin{array}{cc|cccc}
			0 & 1 & \0 &  \0 & \gcdots & \0 \\
			p-1 & p-2 &  \0 &  \0 & \gcdots & \0 \\
			\hline
			\0 & \0 & p-1 & \0 & \gcdots & \0 \\
			\0 & \0 & \0 & p-1 & \gcdots & \0 \\
			\gvdots & \gvdots &  \gvdots & \gvdots & \gddots & \gvdots \\
			\0 & \0 & \0 & \0 & \gcdots & p-1 \\
		\end{array}
		\right]
		\end{equation*}
		has the eigenvalues $-1$ and $p-1$.  On the other hand, $M_1(p^{\alpha})$ is the identity matrix
		and hence has the eigenvalue $1$ with multiplicity $\alpha+1$.  Therefore
		\begin{align*}
			\sum_{k|p^{\alpha}} \frac{1}{( c_{p^{\beta}}(k) \big)^s}
			&= \tr D_{\beta+1}^{-s}(p^{\alpha}) && \text{by \eqref{eq-PrimePowerD}} \\[-8pt]
			&= \tr M_{\beta+1}^{-s}(p^{\alpha}) && \text{by Theorem \ref{TheoremPowerful}}\\
			&= \frac{\alpha}{ (p-1)^{\beta s} } + (-1)^{\beta s},
		\end{align*}
		as required.
	\end{proof}
	
	Along similar lines, we have the following.
	
	\begin{Theorem}
			If $n = p_1^{\alpha_1} p_2^{\alpha_2}\cdots p_r^{\alpha_r}$ is the canonical factorization
			of $n$ into distinct primes $p_1,p_2,\ldots, p_r$ and $d = p_1^{\beta_1} p_2^{\beta_2} \cdots p_r^{\beta_r}$
			is a square-free divisor of $n$ (i.e., $0 \leq \beta_i \leq 1$ for $i=1,2,\ldots,r$), then for $z$ in $\C$ we have
			\begin{equation}\label{eq-Zeta}
				\boxed{
				\sum_{k|n} \frac{1}{|c_{d}(k) |^z} 
				= \prod_{\ell=1}^r \left(1+  \frac{\alpha_{\ell}}{ (p_{\ell}-1)^{\beta_{\ell} z} } \right).
				}
			\end{equation}
	\end{Theorem}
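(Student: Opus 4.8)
The plan is to imitate the proof of Theorem~\ref{TheoremPositivePowers}: reduce \eqref{eq-Zeta} to the case of a prime power $n = p^{\alpha}$ by multiplicativity, and then finish with an elementary computation of the relevant spectrum. Observe first that the hypothesis that $d$ is square-free guarantees $c_d(k) \neq 0$ for every $k \mid n$ --- this is exactly the point at which non-square-free $d$ would cause trouble --- so the left-hand side of \eqref{eq-Zeta} is well defined.

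For the reduction I would argue that $n \mapsto \sum_{k \mid n} |c_d(k)|^{-z}$ is multiplicative in the appropriate sense. Suppose $n = m n'$ with $(m, n') = 1$ and write $d = d_1 d_2$, where $d_1 = (d, m)$ and $d_2 = (d, n')$ are square-free divisors of $m$ and $n'$ respectively. Each divisor $k$ of $n$ factors uniquely as $k = k_1 k_2$ with $k_1 \mid m$ and $k_2 \mid n'$, and since $(m, n') = 1$ the numbers $d_1 k_1$ and $d_2 k_2$ are supported on disjoint sets of primes, so $(d_1 k_1,\, d_2 k_2) = 1$. Hence \eqref{eq-cmnxy} gives $c_d(k) = c_{d_1}(k_1)\,c_{d_2}(k_2)$, and therefore
\begin{equation*}
	\sum_{k \mid n} \frac{1}{|c_d(k)|^z}
	= \Bigl( \sum_{k_1 \mid m} \frac{1}{|c_{d_1}(k_1)|^z} \Bigr)
	  \Bigl( \sum_{k_2 \mid n'} \frac{1}{|c_{d_2}(k_2)|^z} \Bigr).
\end{equation*}
An induction on the number of distinct prime factors of $n$ (in the spirit of Corollary~\ref{CorollarySupermultiplicative}) then reduces \eqref{eq-Zeta} to the claim that $\sum_{k \mid p^{\alpha}} |c_{p^{\beta}}(k)|^{-z} = 1 + \alpha/(p-1)^{\beta z}$ for a prime power $p^{\alpha}$ and $\beta \in \{0,1\}$.

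For the prime-power case I would simply identify the multiset $\{\, c_{p^{\beta}}(k) : k \mid p^{\alpha}\,\}$. If $\beta = 0$ it is $\{1, 1, \ldots, 1\}$ with $\alpha + 1$ entries, giving $\alpha + 1 = 1 + \alpha/(p-1)^{0}$. If $\beta = 1$, then \eqref{eq-PrimePowerMobiusPhi} together with \eqref{eq-DependsGCD} (or directly von Sterneck's formula \eqref{eq-vonSterneck}) gives $c_p(1) = -1$ and $c_p(p^{j}) = p - 1$ for $1 \le j \le \alpha$, so the multiset is a single $-1$ together with $\alpha$ copies of $p-1$, and the sum is $1 + \alpha(p-1)^{-z}$. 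Alternatively, and more in keeping with Subsection~\ref{SubsectionMPA}, one notes from Theorem~\ref{TheoremPowerful} and \eqref{eq-PrimePowerD} that $D_{\beta+1}(p^{\alpha}) \sim M_{\beta+1}(p^{\alpha})$ with the numbers $c_{p^{\beta}}(d_k)$ on the diagonal of $D_{\beta+1}(p^{\alpha})$, so the sum equals $\sum_{\lambda} |\lambda|^{-z}$ over the eigenvalues $\lambda$ of $M_{\beta+1}(p^{\alpha})$; these are $1$ with multiplicity $\alpha+1$ when $\beta = 0$, and $-1$ (multiplicity $1$) together with $p-1$ (multiplicity $\alpha$) when $\beta = 1$, as already recorded for the displayed matrix $M_2(p^{\alpha})$ above. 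Either route yields $1 + \alpha/(p-1)^{\beta z}$ for the $\ell$th factor of \eqref{eq-Zeta}.

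There is no serious obstacle. The only point that needs a moment's care is that for non-integer or complex $z$ the symbol $M_{\beta+1}^{-z}$ has no literal matrix meaning, so ``trace'' must be read as the sum of $|\lambda|^{-z}$ over the eigenvalues $\lambda$ counted with multiplicity; this is legitimate precisely because $M_{\beta+1}(p^{\alpha})$ is diagonalizable by Theorem~\ref{TheoremPowerful} and its spectrum is known explicitly. Everything else is the same bookkeeping used in the earlier power-sum proofs.
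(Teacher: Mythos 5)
Your proposal is correct and follows essentially the same route as the paper: reduce to the prime-power case by multiplicativity and then read off the sum from the spectrum of $M_{\beta+1}(p^{\alpha})$ (equivalently, the multiset $\{c_{p^{\beta}}(k) : k \mid p^{\alpha}\}$), which is one $-1$ together with $\alpha$ copies of $p-1$ when $\beta=1$. The paper handles the absolute value by writing $|c_d(k)|^z=(c_d(k)^2)^{z/2}$ and taking $\tr\big(M_{\beta+1}^2(p^{\alpha})\big)^{-z/2}$, which is the same device as your ``sum of $|\lambda|^{-z}$ over eigenvalues'' caveat; your explicit verification of the multiplicative reduction via \eqref{eq-cmnxy} and of the nonvanishing of $c_d(k)$ for square-free $d$ fills in details the paper leaves implicit.
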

	
	\begin{proof}
		Noting that $|c_d(k)|^z = (c_d(k)^2)^{\frac{z}{2}}$ and that
		\begin{equation*}
			\tr \big(M^2_{\beta+1}(p^{\alpha}) \big)^{-\frac{z}{2}} = 
			1+ \frac{\alpha}{ (p-1)^{\beta z} },
		\end{equation*}
		the proof is similar to the proof of Theorem \ref{TheoremPositivePowers}.
	\end{proof}
	
	Since the expression \eqref{eq-Zeta} bears some resemblance to the classical
	Riemann $\zeta$-function and its variants, it is natural to ask whether this expression
	obeys the analogue of the Riemann Hypothesis.  The following result shows that this occurs
	if and only if $n$ and $d$ satisfy some rather peculiar hypotheses.
	
	\begin{Corollary}
		The complex roots of the function \eqref{eq-Zeta} all lie on the line $\Re z = \frac{1}{2}$
		if and only if each prime $p$ which divides $d$ is of the form $\alpha^2+1$ where
		$p^{\alpha}$ is the highest power of $p$ which divides $n$.
	\end{Corollary}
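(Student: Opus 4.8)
The plan is to read off the zeros directly from the product formula \eqref{eq-Zeta}, since the zero set of a finite product of entire-type functions is the union of the zero sets of the individual factors. First I would discard the factors that cannot vanish. If $p_\ell \nmid d$ then $\beta_\ell = 0$ and the $\ell$th factor equals the nonzero constant $1 + \alpha_\ell$; and if $p_\ell = 2$ happens to divide $d$, then $(p_\ell-1)^{\beta_\ell z} = 1$ identically, so that factor is again a nonzero constant. Since $d$ is square-free, for every odd prime $p_\ell$ dividing $d$ we have $\beta_\ell = 1$, and the corresponding factor is $g_\ell(z) = 1 + \alpha_\ell (p_\ell - 1)^{-z}$. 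Thus the complex roots of \eqref{eq-Zeta} are exactly the union of the root sets of the $g_\ell$ over odd $p_\ell \mid d$.

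Next I would solve $g_\ell(z) = 0$. Writing $(p_\ell-1)^{z} = \exp\!\big(z\ln(p_\ell-1)\big)$, which is legitimate and never zero because $p_\ell - 1 > 1$, the equation becomes $\exp\!\big(z\ln(p_\ell-1)\big) = -\alpha_\ell$, whose solutions are
\[
z = \frac{\ln\alpha_\ell + (2k+1)\pi i}{\ln(p_\ell-1)},\qquad k \in \Z.
\]
Hence the roots contributed by the $\ell$th factor form a vertical arithmetic progression, all sharing the same real part $\ln\alpha_\ell/\ln(p_\ell-1)$.

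Finally I would assemble the equivalence: the roots of \eqref{eq-Zeta} all lie on the line $\Re z = \tfrac12$ if and only if $\ln\alpha_\ell/\ln(p_\ell-1) = \tfrac12$ for every odd prime $p_\ell$ dividing $d$, and clearing logarithms turns this into $\alpha_\ell^2 = p_\ell - 1$, i.e.\ $p_\ell = \alpha_\ell^2 + 1$, which is the asserted condition. The computation is essentially routine; the only point needing care is the bookkeeping — identifying precisely which factors carry zeros (excising the constant factors coming from $p_\ell \nmid d$ and from $p_\ell = 2$ before taking logarithms) and noting that, since all roots of a single factor share one real part, it is both necessary and sufficient to pin that common value to $\tfrac12$ for each relevant prime separately, independently of the others.
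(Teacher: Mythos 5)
Your argument is the same as the paper's: read the zeros off the product \eqref{eq-Zeta} factor by factor, solve $(p_\ell-1)^{-z}=-1/\alpha_\ell$ to get the vertical progression $z=\bigl(\log\alpha_\ell+(2k+1)\pi i\bigr)/\log(p_\ell-1)$, and pin the common real part to $\tfrac12$, which gives $p_\ell=\alpha_\ell^2+1$. The computation is correct, and your explicit excision of the constant factors is more careful than the paper, which jumps straight to the displayed root formula.

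That extra care, however, exposes a mismatch in your last sentence that you should not paper over. What you actually prove is: all roots lie on $\Re z=\tfrac12$ iff $p_\ell=\alpha_\ell^2+1$ for every \emph{odd} prime $p_\ell$ dividing $d$. The corollary quantifies over \emph{every} prime dividing $d$, and these conditions differ precisely when $2\mid d$: as you observe, the $p_\ell=2$ factor is the nonzero constant $1+\alpha_\ell$ and contributes no roots, so it imposes no constraint on the root locations, whereas the corollary would demand $2=\alpha_\ell^2+1$, i.e.\ $\alpha_\ell=1$. For instance $n=4\cdot 5^2$, $d=10$ gives $3\bigl(1+2\cdot 4^{-z}\bigr)$, whose roots all have real part $\tfrac12$ even though $2\neq 2^2+1$. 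So either restrict to odd $d$ (or note the corollary implicitly assumes $p_\ell-1>1$, as the paper's own root formula does by dividing by $\log(p_\ell-1)$), or state your equivalence with the quantifier over odd primes only; as written, the claim that your condition ``is the asserted condition'' is not quite true.
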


	\begin{proof}
		The complex roots of \eqref{eq-Zeta} are precisely the numbers		
		\begin{equation*}
			\qquad\qquad z = \frac{ \log|\alpha_{\ell}| + i (2m+1)\pi}{\log (p_{\ell}-1)},\qquad m\in\Z,
		\end{equation*}
		for those $\ell$ such that $\beta_{\ell} = 1$ (i.e., for those primes $p_{\ell}$ which divide $d$).  
		The real part of the preceding clearly equals $\frac{1}{2}$
		if and only if $p_{\ell}=\alpha_{\ell}^2+1$.
	\end{proof}

	\begin{Example}
		Let $n = 5^2 \times 17^4 \times 37^6 = 5,357,300,885,152,225$ and $d = 5\times 17\times 37 = 3,145$.
		Since $5 = 2^2+1$, $17 = 4^2 + 1$, and $37 = 6^2+1$, it follows that the
		corresponding ``$\zeta$-function'' \eqref{eq-Zeta} satisfies the Riemann Hypothesis.
	\end{Example}

	Using \eqref{eq-Commute} and some of the preceding computations, it is not hard to explicitly 
	evaluate the trace of any word composed 
	using $M_1(p^{\alpha}), M_2(p^{\alpha}),\ldots,M_{\alpha+1}(p^{\alpha})$ and $M_2(p^{\alpha})^{-1}$.
	Consequently, the motivated individual could in principle provide an explicit formula for the sum
	\begin{equation*}
		\sum_{k|n} \frac{ c_{f_1}(k)^{s_1} c_{f_2}(k)^{s_2} \cdots c_{f_{\eta}}(k)^{s_{\eta}} }
		{ c_{g_1}(k)^{t_1} c_{g_2}(k)^{t_2} \cdots c_{g_{\nu}}(k)^{t_{\nu}} },
	\end{equation*}
	where $f_1,f_2,\ldots,f_{\eta}$ are divisors of $n$ and $g_1,g_2,\ldots,g_{\nu}$ are square-free
	divisors of $n$.  We make no attempt to do so here, having made our point that
	the arithmetic of superclasses can be used to deduce a variety of identities for
	Ramanujan sums.

\section{Conclusion}

	We have demonstrated that reexamining even the most elementary of groups, namely the cyclic groups $\Z/n\Z$,
	from the perspective of supercharacter theory can yield surprising results.  In particular, almost the entire 
	algebraic theory of Ramanujan sums can be derived, in a systematic manner, using this approach.
	Many of the familiar classical identities for these fascinating sums, along with a variety of new ones,
	can be obtained with minimal effort once the basic machinery has been developed.
	
	All of our results flow directly from a general theoretical framework without ad hoc arguments.
	More importantly, many of the ideas developed in this note can be applied to arbitrary finite groups.
	In particular, the arithmetic of superclasses (Section \ref{SectionSuperclass}) and the
	simultaneous diagonalization theorem (Theorem \ref{TheoremPowerful}), which yielded
	some of the more elaborate identities for Ramanujan sums, hold in much greater generality.
	We therefore hope that revisiting other families of elementary groups (e.g., dihedral groups, 
	Frobenius groups, symmetric groups, $p$-groups,\ldots)
	from the perspective of supercharacter theory might yield further information about other exponential sums
	(e.g., Gauss sums, Kloosterman sums, Jacobi sums, and their variants) which are of interest
	in number theory (see \cite{SESUP}).

\appendix

\section{Computing the matrix $M_i(p^{\alpha})$}\label{SectionPrimePowerAlpha}
	
	This appendix contains a detailed derivation of the description \eqref{eq-MPN} for the
	matrix $M_i(p^{\alpha})$ given in  Subsection \ref{SubsectionMPA}.
	
	The proof of Lemma \ref{LemmaPreliminary} tells us that $a_{i,j,k}$ is independent of the particular
	representative $z$ of $K_k$ which is chosen.  Since $\Z/p^{\alpha}\Z$ is abelian, we also note that
	\begin{equation}\label{eq-ijk}
		a_{i,j,k} = a_{j,i,k}
	\end{equation}
	and
	\begin{equation}\label{eq-ijk2}
		a_{i,j,k} =0 \quad\iff\quad a_{i,k,j}=0.
	\end{equation}
	Statement \eqref{eq-ijk2} requires some explanation.  Observe that
	$a_{i,j,k} = 0$ holds if and only if $x+y=z$ has no solutions $(x,y,z)$ in $K_i \times K_j \times K_k$.
	Since $K_j = -K_j$ and $K_k = -K_k$ by \eqref{eq-KiDFN}, it follows that the preceding happens if and only if
	$x + z'= y'$ has no solutions $(x,z',y')$ in $K_i \times K_k \times K_j$.
	On the other hand, it is important to note that $a_{i,j,k} = a_{i,k,j}$ does not hold in general
	since the fixed representative $z$ of $K_k$ used in the equation \eqref{eq-xyz} plays a distinguished role.

	We first break down the evaluation of the $a_{i,j,k}$ into five special cases, from which the 
	structure of the matrix $M_i = M_i(p^{\alpha})$ can eventually be deduced.
	
	\begin{Lemma}\label{LemmaFive}
		For $G = \Z/p^{\alpha}\Z$ and $K_i = \{ xp^{\alpha-i+1} \in \Z/p^{\alpha}\Z : p \nmid x\}$, we have
		\begin{enumerate}[(a)]\addtolength{\itemsep}{0.5\baselineskip}
			\item if $k > i$ and $j \neq k$, then $a_{i,j,k} = 0$,
			\item if $j=k>i$, then $a_{i,j,k} = \phi(p^{i-1})$,
			\item if $j=k=i$, then $a_{i,j,k} = p^{i-1} - 2p^{i-2}$,
			\item if $j>k$ and $i\neq j$, then $a_{i,j,k} = 0$,
			\item if $i=j>k$, then $a_{i,j,k} = \phi(p^{i-1})$.
		\end{enumerate}
	\end{Lemma}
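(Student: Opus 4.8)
The plan is to phrase everything in terms of the $p$-adic valuation. For a nonzero element $a$ of $\Z/p^{\alpha}\Z$ write $v(a)$ for the largest exponent $e$ with $p^{e}\mid a$, and set $v(0)=\alpha$; then \eqref{eq-KiDFN} says exactly that $x\in K_{i}$ if and only if $v(x)=\alpha-i+1$, so the superclass index and the valuation run in opposite directions, and in particular $K_{1}=\{0\}$. Fix a representative $z\in K_{k}$; by Lemma~\ref{LemmaPreliminary} the number $a_{i,j,k}$ does not depend on this choice, and it equals the number of $x\in K_{i}$ for which $z-x\in K_{j}$. Apart from part (c), the whole argument rests on the ultrametric inequality $v(z-x)\ge\min\{v(z),v(x)\}$, with equality whenever $v(z)\ne v(x)$.

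For parts (a) and (b), the hypothesis $k>i$ forces $v(z)=\alpha-k+1<\alpha-i+1=v(x)$ for every $x\in K_{i}$, hence $v(z-x)=v(z)$ and $z-x\in K_{k}$ for all such $x$. If $j\ne k$ there are no solutions, which is (a); if $j=k$ every $x\in K_{i}$ contributes, so $a_{i,j,k}=|K_{i}|=\phi(p^{i-1})$, which is (b). For parts (d) and (e), the hypothesis $j>k$ means we must realize $v(z-x)=\alpha-j+1<\alpha-k+1=v(z)$; examining the three possibilities $v(x)>v(z)$, $v(x)<v(z)$, $v(x)=v(z)$ and invoking the ultrametric inequality shows this is possible only when $v(x)=\alpha-j+1$, which means $i=j$. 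Hence when $i\ne j$ there are no solutions, giving (d); and when $i=j$ one automatically has $v(x)=\alpha-i+1<\alpha-k+1=v(z)$, so $v(z-x)=v(x)$ and $z-x\in K_{i}$ for every $x\in K_{i}$, giving $a_{i,j,k}=\phi(p^{i-1})$, which is (e).

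The one case that is not immediate is (c), where $i=j=k$ and $v(x)=v(z)=\alpha-i+1$: now the ultrametric inequality only gives $v(z-x)\ge\alpha-i+1$, and we must determine when equality holds. If $i=1$ then $K_{1}=\{0\}$ and trivially $a_{1,1,1}=1$, so assume $i\ge 2$ and write $x=p^{\alpha-i+1}x'$ and $z=p^{\alpha-i+1}z'$ with $x',z'$ units modulo $p^{i-1}$; then $z-x\in K_{i}$ precisely when $x'\not\equiv z'\pmod p$. The count is therefore the number of units modulo $p^{i-1}$ lying outside the residue class of $z'$ modulo $p$, namely $\phi(p^{i-1})-p^{i-2}=p^{i-1}-2p^{i-2}$, where $p^{i-2}$ is the common size of the fibers of the reduction homomorphism $(\Z/p^{i-1}\Z)^{\times}\to(\Z/p\Z)^{\times}$. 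This last bookkeeping --- isolating the forbidden residue class modulo $p$ and counting its preimage --- is the only delicate point; parts (a), (b), (d), (e) are immediate consequences of the ultrametric inequality, and I anticipate no real obstacle beyond keeping the correspondence between large index and small valuation straight.
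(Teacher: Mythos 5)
Your proof is correct and follows essentially the same route as the paper's: both arguments rest on the observation that $K_i$ consists exactly of the elements of $p$-adic valuation $\alpha-i+1$ and that a sum of two elements of distinct valuations has the minimum of the two valuations (the paper writes this out via explicit representatives $xp^{\alpha-i+1}$ rather than invoking the ultrametric inequality by name). The only divergence is in part (c), where you count the admissible units directly while the paper subtracts the counts supplied by part (b) from $|K_k|$; this is a cosmetic difference, and your explicit handling of the degenerate case $i=1$ (where the stated formula $p^{i-1}-2p^{i-2}$ must be read as $1$) is if anything slightly more careful than the original.
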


	\begin{proof}
		We first prove (a).  Letting $k > i$ and $j \neq k$, we may assume that $i \leq j$ by \eqref{eq-ijk}.  
		If $x_i = xp^{\alpha-i+1}$ and $y_j = yp^{\alpha-j+1}$ belong to $K_i$ and $K_j$, respectively, then it follows that
		\begin{equation}\label{eq-PCXIYJ}
			x_i + y_j = p^{\alpha-j+1}(xp^{j-i} + y)
		\end{equation}
		belongs to $K_j$ since $xp^{j-i}+y$ is not divisible by $p$ (recall that $p\nmid x$ and $p\nmid y$ by definition of 
		$K_i$ and $K_j$).  Since $j \neq k$, it follows from \eqref{eq-PCXIYJ}
		that $x_i+y_j$ cannot belong to $K_k$ from which it follows that  $a_{i,j,k} = 0$.
		
		Next we consider (b).  Suppose that $j = k > i$ and fix $z_k = zp^{\alpha-k+1}$ in $K_k$.
		Since $j=k$, the computation \eqref{eq-PCXIYJ} tells us that for each $x_i$ in $K_i$, there
		exists a unique $y_j = z-xp^{j-i}$ such that $x_i + y_j = z_k$.  Thus $a_{i,j,k} = |K_i| = \phi(p^{i-1})$.
		
		The proof of (c) is somewhat more involved.  Let $i=j=k$ and fix $z_k = zp^{\alpha-k+1}$ where $p \nmid z$.
		For any element $x_i = xp^{n-k+1}$ of $K_i = K_k$, there exists a unique $a_0$ in $\Z/p^{\alpha} \Z$ such that
		\begin{equation}\label{eq-XIAO}
			x_i + a_0 = z_k.
		\end{equation}
		Since $p^{\alpha-k+1}$ divides both $x_i$ and $z_k$, it must also divide $a_0$ so that we can
		write $a_0  = ap^{n-k+1}$ for some $a$.  In light of \eqref{eq-XIAO}, we now have
		$x+a=z$ so that $a = z-x$.  Therefore $a_0$ belongs to $K_k$ if and only if $p \nmid (z-x)$.
		We now note that if $p|(z-x)$, then $x_i$ would serve as a solution to $x_i + y = z_k$
		where $y$ belongs to some $K_{\ell}$ with $\ell < i=k$.  By statement (b), it follows that
		\begin{align*}
			a_{i,j,k}
			&= |K_k| - \sum_{\ell=1}^{k-1} \phi(p^{\ell-1}) \\
			&= \phi(p^k) - \sum_{\ell=2}^{k-1} (p^{\ell-1}-p^{\ell-2}) -1\\
			&=(p^{k-1} - p^{k-2})- (p^{k-2} - p^{k-3}) - \cdots - (p-1) -1\\
			&= p^{k-1} - 2 p^{k-2},
		\end{align*}
		as claimed.
		
		Now we consider statement (d).  Suppose that $j > k$ and $i \neq j$.  In light of \eqref{eq-ijk}, we
		may assume that $i < j$.  Maintaining the same notation and conventions as in the proof of statement
		(a), we again arrive at the equation \eqref{eq-PCXIYJ} and conclude that $x_i + y_j$ belongs to $K_j$.
		Since $j > k$, we conclude that $a_{i,j,k} = 0$.
		
		Finally, let us prove (e).  Suppose that $i = j > k$ and let $z_k = zp^{\alpha-k+1}$ in $K_k$ be given.  For each
		$x_i = xp^{\alpha-i+1}$ in $K_i$, we have
		\begin{equation}\label{eq-ZDA}
			xp^{\alpha-i+1} + (zp^{i-k} - x)p^{\alpha-i+1} = (zp^{i-k})p^{\alpha-i+1} = zp^{\alpha-k+1} = z_k.
		\end{equation}
		Now $p|(zp^{i-k})$ because $i > k$, so it follows that $p \nmid(zp^{i-k} -x)$ since $p \nmid x$.
		Therefore $(zp^{i-k} - x)p^{\alpha-i+1}$ belongs to $K_i$.
		Looking at \eqref{eq-ZDA} we conclude that for each $x_i$ in $K_i$, there exists
		a unique $y_i$ in $K_i$ such that $x_i + y_i = z_k$.  Since $i = j$ we conclude that
		$a_{i,j,k} = |K_i| = \phi(p^{i-1})$, as desired.
	\end{proof}

	Having proven the preceding lemma, it is now straightforward to see that $M_i$ has the form
	\eqref{eq-MPN}.  The complete reasoning is presented below.
	\begin{enumerate}\addtolength{\itemsep}{0.5\baselineskip}
		\item If $j,k < i$, then $(M_i)_{j,k} = 0$.  In other words, the upper-left $(i-1) \times (i-1)$ submatrix of $M_i$ contains
			only zeros.  Indeed, letting $i'=j$ and $j' = i > k$ (so that $i'\neq j'$ since $j<i$), it follows that		
			$(M_i)_{j,k} = a_{i,j,k} = a_{j,i,k} = a_{i',j',k} = 0$ 
			by \eqref{eq-ijk} and (d) of Lemma \ref{LemmaFive}.
	
		\item If $j< i = k$, then $(M_i)_{j,k} = \phi(p^{j-1})$ so that the first $i-1$ entries of the $i$th column of $M_i$
			are given by $1, \phi(p),\phi(p^2),\ldots,\phi(p^{i-2})$.  As before, we set $i'=j$ and $j' = i$ and observe that
			$(M_i)_{j,k} = a_{i,j,k} = a_{j,i,k} = a_{i',j',k} = \phi(p^{i'-1}) = \phi(p^{j-1})$ by (b) of Lemma \ref{LemmaFive}
			since $j' = k > i'$.
			
		\item If $i= j > k$, then $(M_i)_{j,k} = \phi(p^{i-1})$ so that the first $(i-1)$ entries of the $i$th row of $M_i$ are each $\phi(p^{i-1})$.
			To see this, simply note that $(M_i)_{j,k} = a_{i,j,k} = \phi(p^{i-1})$ by (e) of Lemma \ref{LemmaFive}.
			
		\item If $i=j=k$, then $(M_i)_{j,k} = a_{i,i,i} = p^{i-1} - 2p^{i-2}$ by (c) of Lemma \ref{LemmaFive}.
		
		\item If $j = k > i$, then $(M_i)_{j,k} = \phi(p^{i-1})$.  In other words, the final $(n+1-i)$ entries along the main diagonal
			of $M_i$ are $\phi(p^{i-1})$.  This follows immediately from (b) of Lemma \ref{LemmaFive}.
			
		\item If $j > i,k$, then $(M_i)_{j,k} = 0$ follows from (d) of Lemma \ref{LemmaFive}.  Therefore
			the final $(n+1-i)$ rows of $M_i$ have only zeros to the left of the main diagonal.
			
		\item If $i,j < k$, then $(M_i)_{j,k} = 0$.  In other words, the last $(n+1-i)$ columns of $M_i$ have only zeros
			above the main diagonal.  This follows immediately from (a) of Lemma \ref{LemmaFive}.		
	\end{enumerate}

%
%
%

\bibliography{RSS}

\begin{thebibliography}{10}

\bibitem{Aguiar}
Marcelo Aguiar, Carlos Andre, Carolina Benedetti, Nantel Bergeron, Zhi Chen,
  Persi Diaconis, Anders Hendrickson, Samuel Hsiao, I.~Martin Isaacs, Andrea
  Jedwab, Kenneth Johnson, Gizem Karaali, Aaron Lauve, Tung Le, Stephen Lewis,
  Huilan Li, Kay Magaard, Eric Marberg, Jean-Christophe Novelli, Amy Pang,
  Franco Saliola, Lenny Tevlin, Jean-Yves Thibon, Nathaniel Thiem, Vidya
  Venkateswaran, C.~Ryan Vinroot, Ning Yan, and Mike Zabrocki.
\newblock Supercharacters, symmetric functions in noncommuting variables, and
  related hopf algebras.
\newblock {\em Adv. Math.}, 229:2310--2337, 2012.

\bibitem{AndersonApostol}
Douglas~R. Anderson and T.~M. Apostol.
\newblock The evaluation of {R}amanujan's sum and generalizations.
\newblock {\em Duke Math. J.}, 20:211--216, 1953.

\bibitem{An95}
Carlos A.~M. Andr{\'e}.
\newblock Basic characters of the unitriangular group.
\newblock {\em J. Algebra}, 175(1):287--319, 1995.

\bibitem{An01}
Carlos A.~M. Andr{\'e}.
\newblock The basic character table of the unitriangular group.
\newblock {\em J. Algebra}, 241(1):437--471, 2001.

\bibitem{An02}
Carlos A.~M. Andr{\'e}.
\newblock Basic characters of the unitriangular group (for arbitrary primes).
\newblock {\em Proc. Amer. Math. Soc.}, 130(7):1943--1954 (electronic), 2002.

\bibitem{An09}
Carlos A~M Andre and Ana~Margarida Neto.
\newblock A supercharacter theory for the sylow p-subgroups of the finite
  symplectic and orthogonal groups.
\newblock {\em Journal of Algebra}, 322:1273--1294, 2009.

\bibitem{ACDiSt04}
Ery Arias-Castro, Persi Diaconis, and Richard Stanley.
\newblock A super-class walk on upper-triangular matrices.
\newblock {\em J. Algebra}, 278(2):739--765, 2004.

\bibitem{Balandraud}
{\'E}ric Balandraud.
\newblock An application of {R}amanujan sums to equirepartition modulo an odd
  integer.
\newblock {\em Unif. Distrib. Theory}, 2(2):1--17, 2007.

\bibitem{HendricksonNew}
Samuel~G. Benidt, William R.~S. Hall, and Anders O.~F. Hendrickson.
\newblock Upper and lower semimodularity of the supercharacter theory lattices
  of cyclic groups.
\newblock \texttt{arXiv:1203.1638}.

\bibitem{SESUP}
J.L. Brumbaugh, Madeleine Bulkow, Patrick~S. Fleming, Luis~Alberto Garcia,
  Stephan~Ramon Garcia, Gizem Karaali, Matt Michal, and Andrew~P. Turner.
\newblock Supercharacters, exponential sums, and the uncertainty principle.
\newblock preprint: \url{http://arxiv.org/abs/1208.5271}.

\bibitem{Cohen}
Eckford Cohen.
\newblock An extension of {R}amanujan's sum.
\newblock {\em Duke Math. J.}, 16:85--90, 1949.

\bibitem{CohenMAA}
Eckford Cohen.
\newblock Trigonometric sums in elementary number theory.
\newblock {\em Amer. Math. Monthly}, 66:105--117, 1959.

\bibitem{CR62}
Charles~W. Curtis and Irving Reiner.
\newblock {\em Representation theory of finite groups and associative
  algebras}.
\newblock Pure and Applied Mathematics, Vol. XI. Interscience Publishers, a
  division of John Wiley \& Sons, New York-London, 1962.

\bibitem{DiIs08}
Persi Diaconis and I.~M. Isaacs.
\newblock Supercharacters and superclasses for algebra groups.
\newblock {\em Trans. Amer. Math. Soc.}, 360(5):2359--2392, 2008.

\bibitem{DiTh09}
Persi Diaconis and Nathaniel Thiem.
\newblock Supercharacter formulas for pattern groups.
\newblock {\em Trans. Amer. Math. Soc.}, 361(7):3501--3533, 2009.

\bibitem{Droll}
Andrew Droll.
\newblock A classification of {R}amanujan unitary {C}ayley graphs.
\newblock {\em Electron. J. Combin.}, 17(1):Note 29, 6, 2010.

\bibitem{ErdosVaughan}
P.~Erd{\H{o}}s and R.~C. Vaughan.
\newblock Bounds for the {$r$}-th coefficients of cyclotomic polynomials.
\newblock {\em J. London Math. Soc. (2)}, 8:393--400, 1974.

\bibitem{CKS}
Patrick~S. Fleming, Stephan~Ramon Garcia, and Gizem Karaali.
\newblock Classical {K}loosterman sums: representation theory, magic squares,
  and {R}amanujan multigraphs.
\newblock {\em J. Number Theory}, 131(4):661--680, 2011.

\bibitem{HardyRamanujan}
G.~H. Hardy.
\newblock {\em Ramanujan. {T}welve lectures on subjects suggested by his life
  and work}.
\newblock Cambridge University Press, Cambridge, England, 1940.

\bibitem{Hardy}
G.~H. Hardy and E.~M. Wright.
\newblock {\em An introduction to the theory of numbers}.
\newblock The Clarendon Press Oxford University Press, New York, fifth edition,
  1979.

\bibitem{Haukkanen}
Pentti Haukkanen.
\newblock Regular class division of integers (mod {$r$}).
\newblock {\em Notes Number Theory Discrete Math.}, 6(3):82--87, 2000.

\bibitem{HendricksonThesis}
Anders Olaf~Flasch Hendrickson.
\newblock {\em Supercharacter theories of cyclic p-groups}.
\newblock ProQuest LLC, Ann Arbor, MI, 2008.
\newblock Thesis (Ph.D.)--The University of Wisconsin - Madison.

\bibitem{Hendrickson}
Anders Olaf~Flasch Hendrickson.
\newblock Supercharacter constructions corresponding to schur ring products.
\newblock {\em Comm. Alg.}, 2012.
\newblock in press.

\bibitem{Holder}
O.~H\"older.
\newblock Fusions of character tables and {S}chur rings of abelian groups.
\newblock {\em Prace. Mat. Fiz.}, 43:13--23, 1936.

\bibitem{HuJo08}
Stephen~P. Humphries and Kenneth~W. Johnson.
\newblock Fusions of character tables and {S}chur rings of abelian groups.
\newblock {\em Comm. Algebra}, 36(4):1437--1460, 2008.

\bibitem{Isaacs}
I.~Martin Isaacs.
\newblock {\em Character theory of finite groups}.
\newblock AMS Chelsea Publishing, Providence, RI, 2006.
\newblock Corrected reprint of the 1976 original [Academic Press, New York;
  MR0460423].

\bibitem{JamesLiebeck}
Gordon James and Martin Liebeck.
\newblock {\em Representations and characters of groups}.
\newblock Cambridge University Press, New York, second edition, 2001.

\bibitem{Jensen}
J.L.W.V. Jensen.
\newblock Et nyt udtryk for den talteoretiske funktion $\sigma \mu(n) = m(n)$.
\newblock {\em Beretning om den 3 {S}kandinaviske Matematiker-Kongres}, 1915.

\bibitem{JoSm89}
K.~W. Johnson and J.~D.~H. Smith.
\newblock Characters of finite quasigroups. {III}. {Q}uotients and fusion.
\newblock {\em European J. Combin.}, 10(1):47--56, 1989.

\bibitem{JohnsonReciprocity2}
Kenneth~R. Johnson.
\newblock A reciprocity law for {R}amanujan sums.
\newblock {\em Pacific J. Math.}, 98(1):99--105, 1982.

\bibitem{JohnsonReciprocity}
Kenneth~R. Johnson.
\newblock Reciprocity in {R}amanujan's {S}um.
\newblock {\em Math. Mag.}, 59(4):216--222, 1986.

\bibitem{JoPo99}
Kenneth~W. Johnson and Eirini Poimenidou.
\newblock Generalised classes in groups and association schemes: duals of
  results on characters and sharpness.
\newblock {\em European J. Combin.}, 20(1):87--92, 1999.

\bibitem{Jutila}
Matti Jutila.
\newblock Distribution of rational numbers in short intervals.
\newblock {\em Ramanujan J.}, 14(2):321--327, 2007.

\bibitem{Kesava2}
P.~Kesava~Menon.
\newblock On {V}aidyanathaswamy's class division of the residue classes modulo
  {$`N'$}.
\newblock {\em J. Indian Math. Soc. (N.S.)}, 26:167--186, 1962.

\bibitem{Kesava}
P.~Kesava~Menon.
\newblock On functions associated with {V}aidyanathaswamy's algebra of classes
  {${\rm mod}\ n$}.
\newblock {\em Indian J. Pure Appl. Math.}, 3(1):118--141, 1972.

\bibitem{Kluyver}
J.C. Kluyver.
\newblock Some formulae concerning the integers less than $n$ and prime to $n$.
\newblock {\em Proceedings of the Royal Netherlands Academy of Arts and
  Sciences (KNAW)}, 9:408--414, 1906.

\bibitem{Konvalina}
John Konvalina.
\newblock A generalization of {W}aring's formula.
\newblock {\em J. Combin. Theory Ser. A}, 75(2):281--294, 1996.

\bibitem{Ku75}
Philip~C. Kutzko.
\newblock The cyclotomy of finite commutative {P}.{I}.{R}.'s.
\newblock {\em Illinois J. Math.}, 19:1--17, 1975.

\bibitem{Landau}
Edmund Landau.
\newblock {\em Handbuch der {L}ehre von der {V}erteilung der {P}rimzahlen. 2
  {B}\"ande}.
\newblock Chelsea Publishing Co., New York, 1953.
\newblock 2d ed, With an appendix by Paul T. Bateman.

\bibitem{LehmerMahler}
D.~H. Lehmer.
\newblock Mahler's matrices.
\newblock {\em J. Austral. Math. Soc.}, 1:385--395, 1959/1960.

\bibitem{Lucht}
Lutz~G. Lucht.
\newblock A survey of {R}amanujan expansions.
\newblock {\em Int. J. Number Theory}, 6(8):1785--1799, 2010.

\bibitem{Maze}
G{\'e}rard Maze.
\newblock Partitions modulo {$n$} and circulant matrices.
\newblock {\em Discrete Math.}, 287(1-3):77--84, 2004.

\bibitem{McCarthyBook}
Paul~J. McCarthy.
\newblock {\em Introduction to arithmetical functions}.
\newblock Universitext. Springer-Verlag, New York, 1986.

\bibitem{Motose}
Kaoru Motose.
\newblock Ramanujan's sums and cyclotomic polynomials.
\newblock {\em Math. J. Okayama Univ.}, 47:65--74, 2005.

\bibitem{Nanda}
V.~C. Nanda.
\newblock Generalizations of {R}amanujan's sum to matrices.
\newblock {\em J. Indian Math. Soc. (N.S.)}, 48(1-4):177--187 (1986), 1984.

\bibitem{Nathanson}
Melvyn~B. Nathanson.
\newblock {\em Additive number theory}, volume 164 of {\em Graduate Texts in
  Mathematics}.
\newblock Springer-Verlag, New York, 1996.
\newblock The classical bases.

\bibitem{Nicol}
C.~A. Nicol.
\newblock Some formulas involving {R}amanujan sums.
\newblock {\em Canad. J. Math.}, 14:284--286, 1962.

\bibitem{Planat3}
M.~Planat, M.~Minarovjech, and M.~Saniga.
\newblock Ramanujan sums analysis of long-period sequences and $1/f$ noise.
\newblock {\em EPL}, 85:40005, 2009.

\bibitem{Planat2}
Michel Planat, Haret Rosu, and Serge Perrine.
\newblock Ramanujan sums for signal processing of low-frequency noise.
\newblock {\em Phys. Rev. E (3)}, 66(5):056128, 7, 2002.

\bibitem{Planat}
Michel Planat and Haret~C. Rosu.
\newblock Cyclotomy and {R}amanujan sums in quantum phase locking.
\newblock {\em Phys. Lett. A}, 315(1-2):1--5, 2003.

\bibitem{Ramanathan2}
K.~G. Ramanathan.
\newblock Some applications of {R}amanujan's trigonometrical sum {$C_m(n)$}.
\newblock {\em Proc. Indian Acad. Sci., Sect. A.}, 20:62--69, 1944.

\bibitem{Ramanathan}
K.~G. Ramanathan and M.~V. Subbarao.
\newblock Some generalizations of {R}amanujan's sum.
\newblock {\em Canad. J. Math.}, 32(5):1250--1260, 1980.

\bibitem{Ramanujan}
S.~Ramanujan.
\newblock On certain trigonometrical sums and their applications in the theory
  of numbers [{T}rans. {C}ambridge {P}hilos. {S}oc. {\bf 22} (1918), no. 13,
  259--276].
\newblock In {\em Collected papers of {S}rinivasa {R}amanujan}, pages 179--199.
  AMS Chelsea Publ., Providence, RI, 2000.

\bibitem{Ramare}
Olivier Ramar{\'e}.
\newblock Eigenvalues in the large sieve inequality.
\newblock {\em Funct. Approx. Comment. Math.}, 37(part 2):399--427, 2007.

\bibitem{Rao}
K.~Nageswara Rao and R.~Sivaramakrishnan.
\newblock Ramanujan's sum and its applications to some combinatorial problems.
\newblock In {\em Proceedings of the {T}enth {M}anitoba {C}onference on
  {N}umerical {M}athematics and {C}omputing, {V}ol. {II} ({W}innipeg, {M}an.,
  1980)}, volume~31, pages 205--239, 1981.

\bibitem{SchwarzSurvey}
Wolfgang Schwarz.
\newblock Ramanujan expansions of arithmetical functions.
\newblock In {\em Ramanujan revisited ({U}rbana-{C}hampaign, {I}ll., 1987)},
  pages 187--214. Academic Press, Boston, MA, 1988.

\bibitem{SchwarzBook}
Wolfgang Schwarz and J{\"u}rgen Spilker.
\newblock {\em Arithmetical functions}, volume 184 of {\em London Mathematical
  Society Lecture Note Series}.
\newblock Cambridge University Press, Cambridge, 1994.
\newblock An introduction to elementary and analytic properties of arithmetic
  functions and to some of their almost-periodic properties.

\bibitem{Sugunamma}
M.~Sugunamma.
\newblock Eckford {C}ohen's generalizations of {R}amanujan's trigonometrical
  sum {$C(n,\,r)$}.
\newblock {\em Duke Math. J.}, 27:323--330, 1960.

\bibitem{TYT}
Tin-Yau Tam.
\newblock On the cyclic symmetry classes.
\newblock {\em J. Algebra}, 182(3):557--560, 1996.

\bibitem{Th10}
Nathaniel Thiem.
\newblock Branching rules in the ring of superclass functions of unipotent
  upper-triangular matrices.
\newblock {\em J. Algebraic Combin.}, 31(2):267--298, 2010.

\bibitem{ThVe09}
Nathaniel Thiem and Vidya Venkateswaran.
\newblock Restricting supercharacters of the finite group of unipotent
  uppertriangular matrices.
\newblock {\em Electron. J. Combin.}, 16(1):Research Paper 23, 32, 2009.

\bibitem{TothCyclotomic}
L{\'a}szl{\'o} T{\'o}th.
\newblock Some remarks on {R}amanujan sums and cyclotomic polynomials.
\newblock {\em Bull. Math. Soc. Sci. Math. Roumanie (N.S.)},
  53(101)(3):277--292, 2010.

\bibitem{vonSterneck}
R.D. von Sterneck.
\newblock {\em Sitzungsber. Math.-Natur. Kl. Kaiserl. Akad. Wiss. Wien},
  111:1567--1601, 1902.

\bibitem{Ya01}
Ning Yan.
\newblock {\em Representation Theory of the Finite Unipotent Linear Groups}.
\newblock PhD thesis, University of Pennsylvania, 2001.
\newblock Also see \cite{Ya10}.

\bibitem{Ya10}
Ning {Yan}.
\newblock {Representations of Finite Unipotent Linear Groups by the Method of
  Clusters}.
\newblock {\em ArXiv e-prints}, April 2010.

\end{thebibliography}

\end{document}